\newcommand{\Be}{\begin{equation}}
\newcommand{\Ee}{\end{equation}}
\newcommand{\Bea}{\begin{eqnarray}}
\newcommand{\Eea}{\end{eqnarray}}
\newcommand{\Bel}{\begin{align}}
\newcommand{\Eel}{\end{align}}
\newcommand{\Beas}{\begin{eqnarray*}}
\newcommand{\Eeas}{\end{eqnarray*}}
\newcommand{\Benu}{\begin{enumerate}}
\newcommand{\Eenu}{\end{enumerate}}
\newcommand{\Bi}{\begin{itemize}}
\newcommand{\Ei}{\end{itemize}}
\newcommand\supp{\operatorname{supp}}
\def\R{{\mathbb R}}
\theoremstyle{plain}
\newtheorem{thm}{Theorem}[section]
\newtheorem{cor}[thm]{Corollary}
\newtheorem{lem}[thm]{Lemma}
\newtheorem{prop}[thm]{Proposition}
\theoremstyle{remark}
\newtheorem{rmk}{Remark}  
\theoremstyle{definition}
\numberwithin{equation}{section}
\newcommand{\RNum}[1]{\uppercase\expandafter{\romannumeral #1\relax}}
\newcommand{\bxi}{{\bar\xi}}
\newcommand{\bx}{\bar x}
\newcommand{\vthe}{{\mathbf v_{\!\theta}}}
\newcommand{\vphi}{{\mathbf v_{\!\phi}}}
\newcommand{\cA}{\mathcal A}
\newcommand{\bA}{\mathbb A}
\newcommand{\bI}{\mathbb I}
\newcommand{\cL}{\mathcal L}
\newcommand{\sobn}[1]{\cL^{p}_{\alpha}(\mathbb D_{#1})}
\newcommand{\tchi}{{\tilde\chi}}
\newcommand{\cf}{{\mathcal F}}
\newcommand{\sspan}{\operatorname{span}}
\newcommand{\tx}{\textstyle}
\begin{document}

\title[Two-parameter  averages over tori]{$L^p$ maximal bound and Sobolev regularity of two-parameter  averages over tori}

\author[Juyoung Lee]{Juyoung Lee}
\author[Sanghyuk Lee]{Sanghyuk Lee}

\address{Department of Mathematical Sciences and RIM, Seoul National University, Seoul 08826, Republic of  Korea}
\email{ljy219@snu.ac.kr}
\email{shklee@snu.ac.kr}

\subjclass[2020]{{Primary 42B25, 42B15; Secondary 35S30}}
\keywords{Maximal function, averages over  tori,  smoothing estimate}

\begin{abstract}   
We investigate  $L^p$  boundedness of the maximal function defined by  the  averaging operator   $f\to \cA_t^s f$  over  
  the two-parameter family of tori  $\mathbb{T}_t^{s}:=\{ ( (t+s\cos\theta)\cos\phi,\,(t+s\cos\theta)\sin\phi,\, s\sin\theta ): \theta, \phi \in [0,2\pi) \}$ with $c_0t>s>0$ for some $c_0\in (0,1)$.  We prove that the associated (two-parameter) maximal function  is bounded on $L^p$ if and only if $p>2$. We also obtain  $L^p$--$L^q$ estimates for the local maximal operator on a sharp range of $p,q$.  Furthermore, the  sharp smoothing estimates are proved including the sharp  local smoothing estimates for the operators $f\to \mathcal A_t^s f$  and   $f\to \mathcal A_t^{c_0t} f$. 
For the purpose, we make use of  Bourgain--Demeter's decoupling inequality for the cone and Guth--Wang--Zhang's local smoothing estimates for the $2$ dimensional wave operator. 
\end{abstract}

\maketitle

%\tableofcontents

\section{Introduction}
The maximal functions generated by (one-parameter) dilations of a given  hypersurface have been extensively studied (for example, \cite[Ch. 11]{Stein}, \cite{nsw, is, IKM, gr, BDIM}, and references therein)  since Stein's seminal work on the spherical maximal function \cite{Stein2}. Most  of investigations were restricted to the one-parameter maximal functions. Meanwhile, the maximal operators involved with  more than one-parameter family of dilations  were considered by some authors (see \cite{RS} for multiparameter lacunary maximal functions and \cite{E, PS} for related results). For example,  Cho \cite{C} and Heo \cite{H}  obtained such results built on the $L^2$ method which requires sufficient  
decay of the Fourier transform of the associated surface measures.  
However, in those results, boundedness on  sharp range is generally unknown.  
Two-parameter maximal functions associated to homogeneous surfaces were studied by 
Marletta--Ricci \cite{MR}, and Marletta--Ricci--Zienkiewicz \cite{MRZ}, who obtained their boundedness on the sharp range. In those works, homogeneity makes it possible to deduce $L^p$ boundedness from that  of a one-parameter maximal operator.  Not much is so far known about the maximal functions which are genuinely of multiparameter.

In this paper we are concerned with a maximal function which is generated by  averages over  a natural tow-parameter family of tori in $\R^3$. 
Let us set 
\[ \Phi_t^s(\theta,\phi)=\big( (t+s\cos\theta)\cos\phi,\,(t+s\cos\theta)\sin\phi,\, s\sin\theta \big).\] 
For $0<s<t$,  we denote   $\mathbb{T}_t^{s}=\big\{ \Phi_t^{s}(\theta,\phi): \theta,\phi\in [0,2\pi) \big\},$
which is a parametrized torus in $\R^3$.   We consider a measure on $\mathbb{T}_t^{s}$ which is given by 
\Be\label{para} 
\langle   f,  \sigma_t^{s} \rangle =  \int_{[0,2\pi)^2}   f(  \Phi_t^{s}(\theta,\phi))\, d\theta d\phi. 
\Ee
Convolution with the measure $\sigma_t^{s}$ gives rise to a 2-parameter averaging operator $ \mathcal A_t^{s}f:=f\ast \sigma_t^{s}$. 
Let $0<c_0< 1$ be a fixed constant. We begin our discussion with the maximal operator  
\[ 
f\to \sup_{0<t } |\mathcal A_{t}^{c_0t}f|, 
\]
which is generated by the averages over (isotropic) dilations of the torus  $\mathbb T_{1}^{c_0}$. It is not difficult to see that
$f\to \sup_{0<t } |\mathcal A_{t}^{c_0t}f|$ is bounded on $L^p$ if and only if $p>2$. 
Indeed, writing  $ f\ast \sigma_t^{c_0t}=  \int  f\ast \mu_{t}^{\phi}\,  d\phi$, where  
$\mu_{t}^{\phi}$ is the measure on the circle 
$\{t\Phi_1^{c_0}(\phi,\theta):  \theta\in [0,2\pi)\}$. 
Since these circles are subsets of  2-planes containing the origin, $L^p$ boundedness of $ f\to  \sup_{t>0} |f\ast \mu_{t}^{\phi}|$ for $p>2$ can be obtained using the circular maximal theorem \cite{B2}.
 In fact, we need $L^p$ boundedness of the maximal function given  by the convolution averages in $\mathbb R^2$ over the circles $ \mathrm C((t/c_0) e_1, t)$,  
 which are not centered at the origin. Here, $\mathrm C(y, r)$ denotes the circle $\{x\in \mathbb R^2: |x-y|=r\}$. However, such a maximal estimate can be obtained by making use of  the local smoothing estimate for the wave operator (see, for example, \cite{MSS2}).  Failure of $L^p$ boundedness of $f\to \sup_{0<t } |\mathcal A_{t}^{c_0t}f|$ for $p\le 2$ follows if one  takes  $f(x)=\tilde \chi(x) |x_3|^{-1/2}|\log |x_3||^{-1/2-\epsilon}$ for a small $\epsilon>0$, where $\tilde \chi$ is a smooth positive function supported in 
a neighborhood of the origin.

 In the study of the averaging operator defined by hypersurface,  nonvanishing curvature of the underlying surface plays a crucial role.  However, the torus  $\mathbb T_{1}^{c_0}$ has vanishing curvature. More precisely,  
the Gaussian curvature $K(\theta,\phi)$  of $\mathbb{T}_1^{c_0}$ at the point $\Phi_1^{c_0}(\theta,\phi)$ is given by 
\[ K(\theta,\phi)=\frac{\cos\theta}{c_0(1+c_0\cos\theta)}. \]
Notice that $K$ vanishes on the circles  $\Phi_1^{c_0}(\pm \pi/2,\phi)$, $\phi\in[0, 2\pi)$. Decomposing $\mathbb T_{1}^{c_0}$  into the parts which are  away from and near those  circles, we can show, in an alternative way, 
$L^p$ boundedness of $ f\to \sup_{0<t } |\mathcal A_{t}^{c_0t}f|$ for $p>2$. The  part away from the circles  has nonvanishing curvature. Thus,  the associated maximal function is bounded on $L^p$ for $p>3/2$ (\cite{Stein2}). 
Meanwhile, the other parts near the circles  can be handled by the result in \cite{IKM}.

\subsection*{2-parameter maximal function} 
We now consider a two-parameter maximal function 
\[ \mathcal {M}f({x})=\sup_{0<s<c_0 t}\big| \mathcal A_{t}^sf({x}) \big|. \]
Here, the supremum is taken over  on the set $\{(t,s): 0<s<c_0 t \}$ so that  $\mathbb T_s^t$ remains to be a torus.  Unlike the one-parameter maximal function,  (nontrivial) $L^p$ on  $\mathcal {M}$ can not be obtained by the same argument as  above which relies $L^p$ boundedness of a 
related circular maximal function in $\mathbb R^2$.  In fact, to carry out the same argument, one needs $L^p$ boundedness of the maximal function 
given by the (convolution)  averages over the circles $\mathrm C(se_1, t)$ while supremum is taken over $0<s<c_0t$. However, 
Talagrand's construction \cite{Talagrand} (also see \cite[Corollary A.2]{Hansen})  shows that this (two-parameter) maximal function can not be bounded on any $L^p$, $p\neq \infty$.

The following is our first result, which is somewhat surprising in that 
the two-parameter maximal function $\mathcal {M}$ has the same $L^p$ boundedness as  the one-parameter maximal function $f\to \sup_{0<t } |\mathcal A_{t}^{c_0t}f|$. 

\begin{thm}\label{global maximal} 
The maximal operator $\mathcal {M}$ is bounded on $L^p$ if and only if $p>2$.
\end{thm}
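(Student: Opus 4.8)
The plan is to deduce the maximal bound from a sharp two-parameter local smoothing estimate for $\mathcal A_t^s$, proving the latter by combining Bourgain--Demeter $\ell^2$-decoupling for the cone with the Guth--Wang--Zhang sharp local smoothing estimate for the two-dimensional wave operator. Necessity of $p>2$ is immediate: for Schwartz $f$ one has $\mathcal A_t^sf\to\mathcal A_t^{c_0t}f$ as $s\uparrow c_0t$, hence $\mathcal Mf\ge\sup_{t>0}|\mathcal A_t^{c_0t}f|$ pointwise, and the operator on the right is already unbounded on $L^p$ for $p\le2$ via the example $f(x)=\tilde\chi(x)|x_3|^{-1/2}|\log|x_3||^{-1/2-\epsilon}$ recalled in the introduction.

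For sufficiency, using the scaling $\Phi_{\lambda t}^{\lambda s}=\lambda\Phi_t^s$ and a Littlewood--Paley decomposition $f=\sum_{k\ge0}f_k$ (the $k=0$ part dominated by the Hardy--Littlewood maximal function), it suffices to prove, for each $p>2$, a frequency-localized local estimate
\[
\Big\|\sup_{t\in[1,2],\,0<s<2c_0}|\mathcal A_t^s f_k|\Big\|_{p}\ \lesssim\ 2^{-\delta k}\|f_k\|_p,\qquad \supp\widehat{f_k}\subset\{|\xi|\sim2^k\},
\]
for some $\delta=\delta(p)>0$, and then to square-sum in $k$. First I would split off the average $\mathsf C_t f_k$ of $f_k$ over the flat core circle $t\,\mathbb S^1\times\{0\}\subset\R^3$: the term $\sup_t|\mathsf C_t f_k|$ is controlled, with the gain $2^{-\delta k}$ available precisely for $p>2$, by the frequency-localized circular maximal theorem in $\R^2$ together with Fubini in $x_3$. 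The remainder $\mathcal R_t^s f_k=\mathcal A_t^sf_k-\mathsf C_t f_k$ vanishes to second order as $s\to0$, so after a Taylor expansion in $s$ its contribution for $s\lesssim2^{-k}$ again reduces to the $\R^2$ circular maximal theorem applied to derivatives of $f_k$, the powers of $s$ absorbing the extra frequency factors. The genuinely oscillatory range $2^{-k}\lesssim s\lesssim1$ is then treated by dyadic decomposition $s\sim2^{-j}$ ($0\le j\lesssim k$) followed by rescaling to tube-radius $\sim1$ and core-radius $\sim2^{j}$.

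In the oscillatory regime I would fiber $\mathcal A_t^s=\int_0^{2\pi}\mathcal C_{t,s}^\phi\,d\phi$, where conjugation by the rotation $R_\phi$ about the $x_3$-axis with $R_\phi e_1=e_\phi$ identifies $\mathcal C_{t,s}^\phi$ with the translate by $te_\phi$ of the circular average of radius $s$ in the vertical plane $\sspan(e_\phi,e_3)$. Writing
\[
\widehat{\sigma_t^s}(\xi)=2\pi\int_0^{2\pi}e^{-it|\xi'|\cos\phi}\,J_0\!\big(s\sqrt{|\xi'|^2\cos^2\phi+\xi_3^2}\big)\,d\phi
\]
and replacing $J_0$ by its $\pm$ oscillatory parts exhibits (the remainder part of) $\mathcal A_t^s$ as a $\phi$-superposition of half-wave operators with phase $-t|\xi'|\cos\phi\pm s\sqrt{|\xi'|^2\cos^2\phi+\xi_3^2}$; this phase carries nonvanishing cinematic curvature in $\phi$, i.e.\ a cone structure in the $(\phi,\xi)$ variables. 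A two-parameter Sobolev embedding in $(t,s)$---costing a fractional derivative slightly above order $1/p$ in each of $t,s$---reduces the maximal estimate to a two-parameter local smoothing estimate for this operator, which I would prove by applying cone decoupling to decouple the $\phi$-integral into angular plates at the critical scale, bounding each plate by the Guth--Wang--Zhang sharp two-dimensional local smoothing estimate (the mild curvature of the core circle in the $\phi$-direction providing the transverse gain), and recombining via the $\ell^2$-decoupling inequality. Since $p$ may be taken $\le4$, the $\ell^2$-decoupling range for the cone suffices, and the resulting numerology yields exactly the threshold $p>2$, matching the flat-circle obstruction.

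The main difficulty is the genuinely two-parameter character of $\mathcal M$: by Talagrand's construction the per-$\phi$ operator $\sup_{0<s<c_0t}|\mathcal C_{t,s}^\phi f|$ is unbounded on every $L^p$ with $p\ne\infty$, so one cannot estimate $\mathcal Mf$ by $\int_0^{2\pi}\sup_{t,s}|\mathcal C_{t,s}^\phi f|\,d\phi$---the decoupling and the local smoothing must be carried out jointly in $(t,s)$ and uniformly over all $\phi\in[0,2\pi)$. This is where the constraint $s<c_0t$ (which keeps the tube radius below the core radius, hence $\mathbb T_t^s$ embedded) and the vanishing Gaussian curvature of $\mathbb T_1^{c_0}$ along $\theta=\pm\pi/2$ interact most delicately, and one must track how the exponents in the local smoothing estimate depend on the core radius $\sim2^{j}$ so that the gain $2^{-\delta k}$ survives the summation over the scales $2^{-j}$.
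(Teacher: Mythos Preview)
Your high-level plan---reduce to a two-parameter local smoothing estimate via Sobolev embedding in $(t,s)$, and prove the latter by combining cone decoupling with the Guth--Wang--Zhang estimate---is the paper's strategy. But there is a genuine gap in your execution, and it is precisely the step you flag as ``the main difficulty.''

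The paper does \emph{not} use an isotropic Littlewood--Paley decomposition in $|\xi|$. Because the phase after stationary-phase expansion is $t|\bar\xi|\pm s|\xi|$ and the amplitude behaves like $(1+|\bar\xi|)^{-1/2}(1+|\xi|)^{-1/2}$, the paper localizes separately to $|\bar\xi|\sim2^j$ and $|\xi_3|\sim2^k$, and the local smoothing estimates (Proposition~\ref{flocal} and its consequences) split into regimes according to whether $k\le j$, $j\le k\le 2j-n$, or $k>2j-n$, where $2^{-n}\sim s$. The decoupling is carried out in $\bar\xi$, reducing to $\lambda^{-1/2}$-sectors $f_\nu$; on each sector the operator is, after a harmless phase correction, a genuine two-dimensional wave propagator in the plane $\mathrm{span}(\nu,e_3)$, to which GWZ applies with a gain governed by the aspect ratio $h/\lambda$ of the resulting Fourier rectangle $\mathbb I_\lambda\times\mathbb I_h$. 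Your isotropic $f_k$ lumps these regimes together, and your sentence ``applying cone decoupling to decouple the $\phi$-integral \dots\ bounding each plate by GWZ'' does not say in which variables you decouple or how this aspect ratio enters---and it is exactly this ratio that produces summable bounds.

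There is also a numerology problem with the reduction you propose. A Sobolev embedding costing slightly more than $1/p$ derivatives in each of $t$ and $s$ requires the two-parameter smoothing to recover strictly more than $2/p$; but the sharp order is $\alpha<\min(1/2,4/p)$ (Theorem~\ref{two para smoothing}), which for $p$ near $2$ gives only $1/2<2/p$. The paper evades this by a parameter-dependent embedding (Lemma~\ref{sobo}) with weights $\lambda\sim2^j$ in $t$ and $h\sim2^k$ in $s$ matched to the biparametric frequency localization, so the derivative cost is $2^{(j+k)/p}$ rather than $2^{2k/p}$, and this is exactly absorbed by the anisotropic smoothing bounds of Corollary~\ref{cor:A1 estimate}. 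Your handling of the core circle $\mathsf C_t$ and of the range $s\lesssim2^{-k}$ corresponds in spirit to the paper's pieces $\mathfrak N^1,\mathfrak N^2,\mathfrak N^3$, but without the anisotropic decomposition the main piece does not sum for $2<p\le4$.
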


\subsubsection*{Localized maximal function}  
The localized spherical and circular maximal functions which are defined by taking supremum over radii contained in a compact interval included in 
$(0,\infty)$ have $L^p$ improving property, that is to say, 
the maximal operators are bounded from 
$L^p$ to $L^q$ for some $p< q$. Schlag \cite{Schlag97} and  Schalg--Sogge \cite{SS} characterized the almost complete typeset of $p,q$ except the endpoint cases. 
One of the authors \cite{L} obtained most of the remaining endpoint cases.   There are also results in which  dilation parameter sets were generalized to  sets of fractal dimensions (for example, see \cite{ajs,   sww}). 

In analogue to those results  concerning the localized spherical and circular maximal operators,  
it is natural to investigate  $L^p$-improving property of $\mathcal M_c$ which  is defined by 
 \[
\mathcal {M}_{c}f({x})=\sup_{(t,s)\in\mathbb J}\big|\mathcal A_t^{s}f({x})\big|.
\]
Here $\mathbb J$ is a compact subset of $ \mathbb J_\ast:=\{ (t,s)\in \mathbb R^2: 0<s<t\}.$ 
The next theorem gives $L^p$--$L^q$ bound on $\mathcal {M}_{c}$ on a sharp large of $p,q$. 

\begin{thm}\label{main thm}  
Set  $P_1=(5/11,2/11)$ and $P_2=(3/7,1/7)$. Let $\mathcal{Q}$  be the open quadrangle with vertices $(0,0)$,  $(1/2, 1/2)$,  $P_1$, and $P_2$ which includes the half open line segment $[(0,0), (1/2, 1/2))$. {\rm (See Figure \ref{fig1}.)} Then, the estimate 
\begin{equation}\label{main est}
\Vert \mathcal {M}_c f\Vert_{L^q}\lesssim \Vert f\Vert_{L^p}
\end{equation}
holds  if $(1/p,1/q)\in \mathcal{Q}$. 
\end{thm}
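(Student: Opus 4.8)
The plan is to reduce the $L^p$--$L^q$ bound for $\mathcal M_c$ to a local smoothing estimate for the underlying Fourier integral operator and then interpolate with trivial and easy endpoint estimates. First I would freeze the two parameters: since $\mathbb J$ is compact inside $\mathbb J_\ast$, there is a fixed $c_0\in(0,1)$ with $s<c_0 t$ on $\mathbb J$, and by a partition of unity I may assume $(t,s)$ ranges over a small coordinate box, so that $t\sim 1$, $s\sim \lambda$ for some dyadic $\lambda\in(0,c_0]$ (or $s\sim 1$, which is the nondegenerate toroidal regime and is easier). Linearizing the supremum in the usual way, $\mathcal M_c f \leq \sup |\mathcal A_t^s f|$ is controlled by a square function / Sobolev embedding argument: writing $\mathcal A_t^s f = \sum_{j\geq 0} \mathcal A_t^{s,j} f$ where $\mathcal A_t^{s,j}$ is the piece with Fourier support $|\xi|\sim 2^j$, the main contribution is the high-frequency part, and one bounds $\sup_{(t,s)}|\mathcal A_t^{s,j}f|$ by using the two-parameter Sobolev embedding in $(t,s)$ together with derivative bounds $\partial_t,\partial_s$ that each cost a factor $2^j$.

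The heart of the matter is then an $L^p$--$L^q$ fixed-time (or averaged) estimate for $\mathcal A_t^{s,j}$ with a gain over the trivial bound. I would analyze $\widehat{\sigma_t^s}(\xi)$ by stationary phase: from $\Phi_t^s(\theta,\phi)$ one sees that $\sigma_t^s$ is, up to harmless factors, a sum of two pieces, each an average over a circle $\mathrm C(s e_1\cos\theta\text{-translate}, \cdot)$ fibered over $\theta$; more efficiently, $\mathcal A_t^s f = \int \mathcal A^{\phi}_{t,s} f\, d\phi$ where for each $\phi$ one integrates over a circle of radius $s$ in a vertical plane centered at distance $t$ from the axis. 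This exhibits $\mathcal A_t^{s,j}$ as (an integral in $\phi$ of) Fourier integral operators whose canonical relations have at worst the expected fold/cusp-type singularities coming from the two curvature-vanishing circles $\theta=\pm\pi/2$. On the portion where the Gaussian curvature $K(\theta,\phi)$ is bounded below, $\widehat{\sigma_t^s}$ decays like $|\xi|^{-1}$ and the two-dimensional local smoothing estimate of Guth--Wang--Zhang (invoked as stated in the introduction) gives the sharp $L^p\to L^q$ smoothing; on the degenerate portion one either uses Bourgain--Demeter decoupling for the cone to recover the same bound with an acceptable $\varepsilon$-loss, or absorbs it via the known estimates for surfaces with one nonvanishing principal curvature. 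Tracking the $\lambda$-dependence carefully, one arrives at a bound of the form $\|\mathcal A_t^{s,j}f\|_{L^q}\lesssim 2^{-j\beta(p,q)}\lambda^{\gamma(p,q)}\|f\|_{L^p}$ for $(1/p,1/q)$ in a suitable region, where $\beta>0$ exactly when $(1/p,1/q)$ lies below the critical line through $P_1$ and $P_2$.

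Finally I would assemble the pieces. Summing the geometric series in $j$ (using $\beta(p,q)>0$, which holds up to the boundary line $[P_1,P_2]$) and then in the dyadic scales $\lambda$ (the exponent $\gamma$ is arranged to be summable, or one simply notes $\lambda\lesssim 1$ so only finitely many relevant scales contribute after a further reduction) yields \eqref{main est} at the open vertices $P_1$ and $P_2$ and along the segment $[(0,0),(1/2,1/2))$ one has the elementary $L^1\to L^\infty$–type and $L^2$ bounds (the $L^2\to L^2$ bound for the localized maximal function is standard from the $|\xi|^{-1}$ decay plus Sobolev embedding in the parameters). Interpolating these three edges gives the full open quadrangle $\mathcal Q$. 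The main obstacle I anticipate is the degenerate region near $\theta=\pm\pi/2$: there the phase of $\widehat{\sigma_t^s}$ has a cusp-type critical point, the naive decay is only $|\xi|^{-1/2}$ in one direction, and one must combine decoupling for the cone with the precise geometry (the fact that these flat circles lie in planes through the origin, exactly the structure exploited for the one-parameter result) to keep the $L^p$--$L^q$ exponents sharp and uniform in the small parameter $\lambda=s/t$; getting the $\lambda$-uniformity right, rather than the exponents on a single torus, is the delicate point.
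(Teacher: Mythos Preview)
Your high-level strategy---frequency decompose, pass from the supremum to an $L^q$ norm in the parameters via Sobolev embedding, use local smoothing/decoupling for the resulting Fourier integral operator, then sum and interpolate---is the same as the paper's. But two of your concrete choices diverge from what the paper actually does, and one of them is likely to cost you the sharp quadrangle.

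First, the $\lambda=s/t$ tracking is a red herring here. Since $\mathbb J$ is compact in $\mathbb J_\ast$, the paper immediately normalizes to $s\sim 1$, $t\sim 1$ (so $\tau\sim 1$ in their notation); there is no small-$s$ summation needed for $\mathcal M_c$. The delicate $\tau$-dependence you worry about is used elsewhere (for the global two-parameter maximal theorem), not for Theorem~1.2.

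Second, and more importantly, the paper does \emph{not} use a single Littlewood--Paley parameter $|\xi|\sim 2^j$ together with a curvature-based splitting of the torus. Instead it performs a genuinely two-parameter frequency decomposition $f=\sum_{j,k} f_j^k$ with $|\bar\xi|\sim 2^j$ and $|\xi_3|\sim 2^k$, and then---via the Bessel asymptotics $\widehat{d\mu}(\bar\xi)\sim |\bar\xi|^{-1/2}e^{\pm i|\bar\xi|}$ applied twice---reduces $\mathcal A_t^s f_j^k$ to the clean two-parameter propagator $\mathcal U f(x,t,s)=\int e^{i(x\cdot\xi+t|\bar\xi|+s|\xi|)}\widehat f(\xi)\,d\xi$. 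The decoupling and GWZ local smoothing are applied to $\mathcal U$, yielding bounds on $\|\mathfrak M_c f_j^k\|_{L^q}$ that depend on $j$ and $k$ separately and differently in the three regimes $k<j$, $j\le k\le 2j$, $k>2j$ (Lemma~3.2 with $n=0$). The vertices $P_1=(5/11,2/11)$ and $P_2=(3/7,1/7)$ emerge precisely from summing these anisotropic bounds: for instance the condition $7/p-1/q<3$ (the line through $P_1$) comes from the worst case $k=2j$ in the regime $1/p+3/q\ge 1$, and $3/p-2/q<1$ (the line through $P_2$) from the same worst case in the regime $1/p+3/q<1$. A single-parameter decomposition in $|\xi|$ blurs exactly this distinction: the anisotropic decay $|\widehat{d\sigma_t^s}(\xi)|\lesssim (1+|\bar\xi|)^{-1/2}(1+|\xi_3|)^{-1/2}$ means that on the set $|\xi|\sim 2^j$ the behaviour ranges from $2^{-j}$ (generic direction) to $2^{-j/2}$ (near the $\xi_3$-axis), and your proposed ``curved part / flat part'' surface splitting is a physical-space proxy for this that is much harder to feed into the cone decoupling with the correct exponents. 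It may be possible to push your route through, but as written it is not clear you would recover $P_1$ and $P_2$ rather than a strictly smaller region.
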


Conversely, if $(1/p,1/q)\notin \overline{\mathcal{Q}}\setminus\{(1/2, 1/2)\}$, then the estimate \eqref{main est} fails.

\begin{figure}
\begin{tikzpicture}[scale=0.7]
\draw[thick] (0,0)rectangle(20/3,20/3);
\fill[black!20!white] (0,0)--(10/3,10/3)--(100/33,40/33)--(20/7,20/21);
\draw (0,0)--(20/3,20/3);
\draw [densely dotted](20/3,0)--(0,20/3);
\draw [densely dotted] (20/3,0)--(5/3,5/3);
\draw [thick](4/3,5/3) node[above]{$(\frac14,\frac14)$};
\draw[thick] (10/3,10/3)--(100/33,40/33);
\draw[thick] (100/33,40/33)--(20/7,20/21);
\draw[thick] (0,0)--(20/7,20/21);
\draw (20/3,0) node[below]{$\frac{1}{p}$};
\draw (0,20/3) node[left]{$\frac{1}{q}$};
\draw (-2/12,0) node[font=\fontsize{10}{10}, below]{$(0,0)$};
\draw (97/33,45/33) node[font=\fontsize{10}{10}\selectfont, right]{$P_1$};
\draw (20/7,20/21) node[below]{$P_2$};
\begin{scope}[every path/.style={->}]
   \draw (0,0) -- (7,0);
   \draw (0,0) -- (0,7);
\end{scope}
\end{tikzpicture}
\label{fig1}
\captionof{figure}{The typeset of $\mathcal M_c$}
\end{figure}
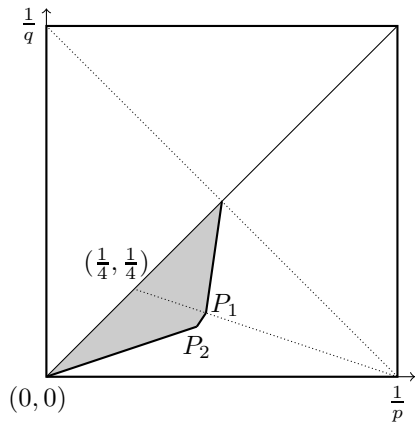

\subsection*{Smoothing estimates for $\cA_t^s$}  Smoothing estimates for averaging operators have a close connection to the associated maximal functions. Especially, the
local smoothing estimate for the wave  operator was used by Mockenhaupt-- Seeger--Sogge \cite{MSS2} to provide  
an alternative proof of the circular maximal theorem. Recent progress \cite{KLO, BGHS, KLO1} on the maximal functions associated with 
the curves in higher dimensions were also achieved  by relying on local smoothing estimates  (also see \cite{PS}).  Analogously, 
our proofs of Theorem \ref{global maximal} and  \ref{main thm} are also based on 2-parameter local smoothing estimates for the averaging operator  $\mathcal A_t^{s}$, which are of independent interest.  
In the following, we obtain the sharp two-parameter local smoothing estimate for $\mathcal A_t^{s}$.

\begin{thm}\label{two para smoothing}   Let $p\ge 2$ and $\psi$ be a smooth function with its support contained in  $\mathbb J_\ast$.  Set  $\tilde \cA_t^s f(x) =\psi(t,s) \cA_t^s f(x).$  
Then, the estimate 
\Be \label{2p-sm}  \| \tilde \cA_t^{s} f\|_{L^p_\alpha(\mathbb R^5)} \lesssim \|f\|_{L^p(\mathbb R^3)}  \Ee 
holds if $\alpha<\min\lbrace 1/{2},4/{p} \rbrace$.  
\end{thm}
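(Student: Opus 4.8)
The plan is to reduce the theorem—by interpolation and a Littlewood--Paley decomposition—to a single frequency-localized $L^8$ estimate, and then to prove that estimate by marrying the sharp local smoothing for the $2$-dimensional wave operator to decoupling for the cone.

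\emph{Reductions.} Since $\cA_t^s$ is convolution with a measure of total mass $O(1)$ and $\psi$ is bounded, $\|\tilde\cA_t^s f\|_{L^\infty(\R^5)}\lesssim\|f\|_{L^\infty(\R^3)}$. Next, from \eqref{para} one has $\widehat{\sigma_t^s}(\xi)=\int_{[0,2\pi)^2}e^{-i\xi\cdot\Phi_t^s(\theta,\phi)}\,d\theta\,d\phi$, and the $\phi$-integral produces a Bessel function,
\Be\label{pp:bessel}\widehat{\sigma_t^s}(\xi)=2\pi\int_0^{2\pi}J_0\big((t+s\cos\theta)|\xi'|\big)\,e^{-is\sin\theta\,\xi_3}\,d\theta,\qquad \xi=(\xi',\xi_3).\Ee
Inserting the asymptotics of $J_0$ and applying stationary phase in $\theta$—the resulting phases are $\pm t|\xi'|\pm s|\xi|\cos(\theta+\gamma(\xi))$, with nondegenerate critical points since $s$ and $t+s\cos\theta$ are $\sim 1$ on $\supp\psi$—gives the uniform bound $|\widehat{\sigma_t^s}(\xi)|\lesssim(1+|\xi|)^{-1/2}$. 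Because $J_0(at)$ has Fourier support in $[-|a|,|a|]$ and $e^{-is\sin\theta\,\xi_3}$ in $[-|\xi_3|,|\xi_3|]$, the $(t,s)$-frequencies of $\tilde\cA_t^s f$ are at most a constant times its $x$-frequency. Hence, by Plancherel, $\|\tilde\cA_t^s f\|_{L^2_{1/2}(\R^5)}\lesssim\|f\|_{L^2(\R^3)}$, and by complex interpolation of the Bessel potential spaces it suffices to prove $\|\tilde\cA_t^s f\|_{L^8_\alpha(\R^5)}\lesssim\|f\|_{L^8(\R^3)}$ for all $\alpha<1/2$ (interpolate with the $L^2$ bound for $2\le p\le 8$ and with the $L^\infty$ bound for $p>8$). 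Using the frequency localization just described together with $f=\sum_\lambda P_\lambda f$, this reduces to
\Be\label{pp:goal}\|\tilde\cA_t^s P_\lambda f\|_{L^8(\R^5)}\lesssim_\epsilon\lambda^{-1/2+\epsilon}\|P_\lambda f\|_{L^8(\R^3)},\qquad \lambda\ \text{dyadic}.\Ee

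\emph{Oscillatory reduction.} Retaining the full stationary phase expansion in \eqref{pp:bessel} and decomposing $|\xi'|$ into dyadic scales $1\lesssim\mu\le\lambda$, one writes $\widehat{\sigma_t^s}(\xi)$ on $\{|\xi|\sim\lambda,\,|\xi'|\sim\mu\gg 1\}$ as a sum over the four sign choices of $\mu^{-1/2}\lambda^{-1/2}\,a_{\lambda,\mu}^{\pm,\pm'}(t,s,\xi)\,e^{\pm it|\xi'|}e^{\pm' is|\xi|}$, with $a_{\lambda,\mu}^{\pm,\pm'}$ a symbol of order $0$, up to a rapidly decaying error. On the extremely degenerate piece $|\xi'|\lesssim 1$ (where $|\xi_3|\sim\lambda$) one has $|\widehat{\sigma_t^s}(\xi)|\lesssim\lambda^{-1/2}$ with symbol bounds and $e^{\pm' is|\xi|}=e^{\pm' is|\xi_3|}(1+O(\lambda^{-1}))$, so the operator there is a modulation in $x_3$ composed with an operator bounded on $L^8(\R^5)$ and \eqref{pp:goal} is immediate. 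For the remaining scales, summing the $O(\log\lambda)$ sign components and dyadic $\mu$'s (Minkowski's inequality in $L^8$ and almost orthogonality of the $\mu$-pieces absorb the logarithms) shows that \eqref{pp:goal} follows from the oscillatory estimate
\Be\label{pp:osc}\Big\|\psi(t,s)\!\int_{\R^3}\! e^{\,i(x\cdot\xi\,\pm\, t|\xi'|\,\pm'\, s|\xi|)}\,b(t,s,\xi)\,\widehat g(\xi)\,d\xi\Big\|_{L^8(\R^5)}\lesssim_\epsilon\mu^{1/2}\lambda^{\epsilon}\|g\|_{L^8(\R^3)},\Ee
for $b$ a symbol of order $0$ and $\widehat g$ both supported in $\{|\xi|\sim\lambda,\,|\xi'|\sim\mu\}$.

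\emph{The core estimate and the main difficulty.} In \eqref{pp:osc}, $\pm t|\xi'|$ is the phase of the $2$-dimensional half-wave propagator at frequency $\mu$ in the variables $(x',t)$, while $\pm' s|\xi|$ is that of the $3$-dimensional half-wave propagator at frequency $\lambda$ in $(x,s)$. The idea is to exploit both averagings in tandem: freezing $s$ and applying, with $x_3$ a spectator, the Guth--Wang--Zhang sharp local smoothing estimate for the $2$-dimensional wave equation—valid at $L^8$ since $8\ge 4$, where it gains $\mu^{1/4+\epsilon}$—handles the $(x',t)$-cone; integrating then in $s$ over $\supp\psi$ leaves a local smoothing quantity for $e^{\pm' is|\xi|}g$ in $L^8(\R^3\times I)$ with $\widehat g$ confined to $\{|\xi|\sim\lambda,|\xi'|\sim\mu\}$, a set on which $\xi\mapsto|\xi|$ is essentially flat in $\xi_3$ and, in the $\xi'$-directions, an anisotropically rescaled cone of frequency $\sim\mu^2/\lambda$; that quantity is controlled by Bourgain--Demeter's $\ell^2$ decoupling for the cone at the appropriate scale. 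The two gains then combine with the amplitude size $\mu^{-1/2}\lambda^{-1/2}$ to give at most $\lambda^{-1/2+\epsilon}$, uniformly in $\mu$. The principal obstacle is that the two light-cone structures sit over overlapping frequency variables—the $\mu$-cone lies above $\xi'$, which is also the base of the $\lambda$-cone—so the two inputs cannot be applied to independent tensor factors; the two-step reduction must instead be carried out on a single frequency decomposition whose pieces are adapted to both cones at once ($\mu^{-1/2}$-angular sectors in $\xi'$, refined into slabs on which $|\xi|$ is affine), and the real work is to verify that this common decomposition is admissible for both inequalities and that the exponents balance over the whole range $1\ll\mu\le\lambda$, the delicate regime being $\sqrt\lambda\lesssim\mu\lesssim\lambda$. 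Granting \eqref{pp:osc}, summing in $\mu$ and then geometrically in $\lambda$ (since $\lambda^{-1/2+\epsilon}$ beats the Sobolev weight $\lambda^{-\alpha}$ for $\alpha<1/2$) completes the proof.
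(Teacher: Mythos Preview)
Your reductions are correct and coincide with the paper's: interpolation against $L^2\to L^2_{1/2}$ and $L^\infty$, the observation that the $(t,s)$-frequencies are controlled by the $x$-frequency, and the stationary-phase expansion of $\widehat{\sigma_t^s}$ all appear (in slightly different packaging) in the paper's proof and in the supporting Proposition~\ref{prop:2psmoothing}. Your oscillatory target \eqref{pp:osc} is exactly the content of Proposition~\ref{flocal} at $p=q=8$, $\tau\sim 1$.

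The gap is that you do not prove \eqref{pp:osc}; you sketch a route and then ``grant'' the conclusion. The order you describe---apply Guth--Wang--Zhang in $(x',t)$ for fixed $s$, then control the remaining $3$-dimensional wave $\|e^{\pm'is|D|}g\|_{L^8_{x,s}}$ by cone decoupling---overshoots in the regime $\mu\sim\lambda$, $|\xi_3|\sim\lambda$: GWZ costs $\mu^{1/4+\epsilon}\sim\lambda^{1/4+\epsilon}$ and the $3$-d local smoothing from decoupling costs $\lambda^{5/8+\epsilon}$, giving $\lambda^{7/8+\epsilon}$, whereas \eqref{pp:osc} requires $\lambda^{1/2+\epsilon}$. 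The reason is that both steps are spending the \emph{same} angular structure in $\bar\xi$. Your fallback suggestion (a common decomposition into $\mu^{-1/2}$-sectors in $\bar\xi$ refined into slabs where $|\xi|$ is affine) is the right instinct, but the sentence ``the real work is to verify\ldots'' is an accurate description of what is missing.

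The paper's resolution (Proposition~\ref{flocal}) inverts the order: first apply Bourgain--Demeter $\ell^2$ decoupling for the $2$-cone in $(\bar x,t)$ at scale $\mu$, passing to $\mu^{-1/2}$-angular sectors $\{\omega_\nu\}$ in $\bar\xi$ (this is the only step that spends the $\bar\xi$-angular structure). On each sector the phase $t|\bar\xi|$ linearizes to $t\,\nu\!\cdot\!\bar\xi$ and is removed by translation, while $s|\xi|$ is replaced---via a harmless multiplier---by $s\sqrt{(\nu\!\cdot\!\bar\xi)^2+\xi_3^2}$, a genuinely two-dimensional cone phase in the variables $(\nu\!\cdot\!\bar x,\,x_3)$. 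One then applies the $2$-d local smoothing estimate (Lemma~\ref{lem:locals}, itself a consequence of GWZ) in $(\nu\!\cdot\!\bar x,\,x_3,\,s)$; since this second step lives on the $(\nu\!\cdot\!\bar\xi,\xi_3)$-plane, it is independent of the angular $\bar\xi$-decomposition already used, and the exponents balance exactly. A separate $L^4$ argument (finite speed of propagation) provides the interpolation endpoint. If you reorganize your ``common decomposition'' along these lines---decouple first in $(\bar x,t)$, then apply $2$-d (not $3$-d) local smoothing in the radial--$\xi_3$ plane---your outline becomes the paper's proof.
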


The result in Theorem \ref{two para smoothing} is sharp in that $\tilde \cA_t^{s}$ can  not be bounded from $L^p$ to $L^p_\alpha$ if $\alpha>\min\lbrace 1/2,4/p\rbrace$ (see Section \ref{counterexample section} below).  Using 
the estimate \eqref{2p-sm}, one can deduce results concerning the dimension of a union of the 
torus $x+\Gamma_t^s$, $(x,t,s)\in E\subset \mathbb R^3 \times \mathbb J_\ast $. See \cite{HKLO}.

We also obtain the sharp local smoothing estimate for the 1-parameter operator $f\to \cA_{t}^{c_0t}f$. 

\begin{thm}\label{smoothing} Let $\chi_0 \in C_c^\infty(0,\infty)$.  Let $p\geq 2$ and $0<c_0<1$. Then, for $\alpha<\min\lbrace 1/{2},3/{p}\rbrace$, we have
\Be\label{1p-sm} \| \chi_0(t) \cA_t^{c_0t} f\|_{L^p_\alpha(\mathbb R^4)} \lesssim \|f\|_{L^p(\mathbb R^3)}. \Ee
\end{thm}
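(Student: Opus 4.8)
The plan is to deduce Theorem~\ref{smoothing} from the two-parameter estimate of Theorem~\ref{two para smoothing} by fixing the relation $s=c_0t$ and exploiting the extra structure of the circles making up $\mathbb T_t^{c_0t}$. First I would recall the decomposition $\cA_t^{c_0t}f=\int_0^{2\pi} f\ast\mu_t^\phi\,d\phi$, where $\mu_t^\phi$ is the arclength measure on the circle $\{t\Phi_1^{c_0}(\theta,\phi):\theta\in[0,2\pi)\}$, which lies in the $2$-plane $\Pi_\phi$ through the origin spanned by $(\cos\phi,\sin\phi,0)$ and $e_3$. In the coordinates of $\Pi_\phi$ this is the planar circle $\mathrm C((t/c_0)e_1,t)$ of radius $t$ centered at $(t/c_0)e_1$, i.e. a (translated) dilate of a fixed circle. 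Thus after a rotation $R_\phi$ taking $\Pi_\phi$ to the $x_1x_3$-plane, $f\ast\mu_t^\phi$ becomes a two-dimensional wave-type average in the $x_1x_3$ variables, uniformly in $x_2$ and in $\phi$.

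The key step is then a fixed-time (actually fixed-$\phi$) smoothing estimate: for the planar circular average $g\mapsto g\ast_{\mathbb R^2}\nu_t$ over $\mathrm C((t/c_0)e_1,t)$, the Guth--Wang--Zhang local smoothing estimate for the $2$-dimensional wave operator (as used, e.g., in \cite{MSS2}) gives, after freezing $t$ in a compact set via $\chi_0$,
\Be\label{planar-sm}
\Big\| \chi_0(t)\, g\ast_{\mathbb R^2}\nu_t \Big\|_{L^p_\alpha(\mathbb R^3)} \lesssim \|g\|_{L^p(\mathbb R^2)}, \qquad \alpha<\min\{1/2,3/p\},
\Ee
the gain $3/p$ being the sharp local smoothing exponent in two dimensions. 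I would then lift \eqref{planar-sm} to $\mathbb R^3$: writing a point of $\mathbb R^3$ as $(y,x_2)$ with $y\in\Pi_\phi$, the operator $f\mapsto \chi_0(t) f\ast\mu_t^\phi$ acts only in the $y$-variables, so Minkowski's inequality in $x_2$ together with \eqref{planar-sm} yields $\|\chi_0(t) f\ast\mu_t^\phi\|_{L^p_\alpha(\mathbb R^4)}\lesssim\|f\|_{L^p(\mathbb R^3)}$ with a constant uniform in $\phi$ (the Sobolev derivative here is taken in the $(x,t)$ variables, and since the $\phi$-direction plays no role one loses nothing). Integrating in $\phi\in[0,2\pi)$ via Minkowski's inequality again gives \eqref{1p-sm}.

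The main obstacle is to make the $\phi$-integration and the Sobolev norm genuinely compatible: the $L^p_\alpha(\mathbb R^4)$ norm mixes the $x$ and $t$ derivatives, and the rotations $R_\phi$ vary with the integration variable, so one must check that the smoothing gain survives after summing the pieces $f\ast\mu_t^\phi$ rather than being degraded to the trivial $L^p\to L^p$ bound. The cleanest way around this is to avoid integrating the already-smoothed pieces and instead run a Littlewood--Paley / frequency-localized argument: decompose $f=\sum_k f_k$ with $\hat f_k$ supported in $|\xi|\sim 2^k$, and prove the frequency-localized bound $\|\chi_0(t)\cA_t^{c_0t} f_k\|_{L^p(\mathbb R^4)}\lesssim 2^{-k\alpha}\|f_k\|_{L^p(\mathbb R^3)}$ directly, by stationary phase on the multiplier $\widehat{\sigma_t^{c_0t}}(\xi)=\int_0^{2\pi}\widehat{\mu_t^\phi}(\xi)\,d\phi$ combined with the $2$D local smoothing estimate applied on each dyadic piece; the $\phi$-integral then only helps (it is over a compact set and the phases in $\phi$ are nondegenerate except on a negligible set that is handled as in the discussion of $\sup_{t}|\cA_t^{c_0t}f|$ after Theorem~\ref{global maximal}). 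Finally, I would note that $\min\{1/2,3/p\}$ is forced: the $3/p$ comes from the two genuinely curved directions together with the translation parameter $t$, while the $1/2$ ceiling is the usual obstruction from the flat directions on the circles $\Phi_1^{c_0}(\pm\pi/2,\phi)$ where the Gaussian curvature of the torus vanishes, and sharpness is checked by the same Knapp-type and focusing examples indicated in Section~\ref{counterexample section}.
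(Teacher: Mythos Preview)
Your proposal has a genuine gap at its central step: the planar estimate \eqref{planar-sm} you quote, with smoothing order $3/p$, is false. For the two-dimensional circular average $g\mapsto g\ast_{\mathbb R^2}\nu_t$ with $t$ ranging over a compact interval, the sharp local smoothing exponent (via Guth--Wang--Zhang) is $\alpha<\min\{1/2,\,2/p\}$, not $3/p$. Concretely, if $\widehat g$ is supported at frequency $\lambda$, then $g\ast\nu_t\approx \lambda^{-1/2}\mathcal W_\pm g$, and Theorem~\ref{prop:locals} with $p=q$ gives $\|\chi_0(t)\,g\ast\nu_t\|_{L^p(\mathbb R^2\times\mathbb I)}\lesssim \lambda^{-2/p+\epsilon}\|g\|_{L^p}$. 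Consequently your Minkowski-in-$\phi$ argument can only deliver $\alpha<\min\{1/2,2/p\}$, which is exactly the fixed-$(t,s)$ estimate of Theorem~\ref{fixed}, not the one-parameter local smoothing estimate you are trying to prove. The extra $1/p$ must come from the $\phi$-integration itself, and Minkowski's inequality throws that gain away.

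Your fallback (frequency localization plus stationary phase on the full multiplier) is closer in spirit, but the sentence ``combined with the $2$D local smoothing estimate applied on each dyadic piece'' is where it fails to close. After stationary phase in $\theta$ and $\phi$, the principal symbol of $\cA_t^{c_0t}$ is $|\bxi|^{-1/2}|\xi|^{-1/2}e^{it(|\bxi|\pm c_0|\xi|)}$, so the relevant propagator is $e^{it(|\bar D|\pm c_0|D|)}$ acting on functions of three variables. This is not a two-dimensional wave operator, and no slicing reduces it to one without losing the needed gain. The paper obtains the sharp bound by proving a direct $L^p$ estimate for this propagator (Proposition~\ref{one para propagator}) using the $\ell^p$ decoupling inequality for the conic hypersurface $\Gamma_\pm=\{(\xi,\,|\bxi|\pm c_0|\xi|)\}\subset\mathbb R^{3+1}$, after checking that $\operatorname{Hess}(|\bxi|\pm c_0|\xi|)$ has rank $2$. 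The authors remark explicitly that treating the two phases $|\bxi|$ and $c_0|\xi|$ separately---which is essentially what your slicing does---fails here, in contrast to the two-parameter case where $t$ and $s$ vary independently. That higher-dimensional decoupling input is the missing idea in your proposal.
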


The estimate above is sharp  since $f\to  \chi_0(t) {\cA}_t^{c_0t} f$ fails to be bounded from $L^p_{{x}}$ to $L^{p}_{\alpha}(\R^4)$
 if $\alpha>\min\lbrace1/{2},3/{p}\rbrace$  (Section \ref{counterexample section}).  The next theorem gives 
 the sharp regularity estimate for $ \cA_t^{s}$ when $s,t$ fixed.

\begin{figure}
\begin{tikzpicture}[scale=0.8]
\fill[black!10!white] (0,0)--(20/3,0)--(20/3,10/3)--(10/6,10/3);
\draw [thick] (0,0)--(10/3,10/3);
\draw (10/6,10/3)--(20/3,10/3);
\draw[dotted] (20/3,10/3)--(20/3,0);
\draw[dotted] (10/3,10/3)--(10/3,0);
\draw (10/3,0) node[below]{$\frac{1}{4}$};
\draw (20/3,0) node[below]{$\frac{1}{2}$};
\draw [thick] (0,0)--(20/9,10/3);
\draw [thick] (0,0)--(15/9,10/3);
\draw[dotted] (10/6,10/3)--(10/6,0);
\draw[dotted] (20/9,10/3)--(20/9,0);
\draw (10/6,0) node[below]{$\frac{1}{8}$};
\draw (20/9,0) node[below]{$\frac{1}{6}$};
\draw[dotted] (0,10/3)--(10/6,10/3);
\draw (0,10/3) node[left]{$\frac{1}{2}$};
\draw (7,0) node[right]{$\frac{1}{p}$};
\draw (0,12/3) node[above]{$\alpha$};
\begin{scope}[every path/.style={->}]
   \draw (0,0) -- (7,0);
   \draw (0,0) -- (0,12/3);
\end{scope}
\end{tikzpicture}
\captionof{figure}{Smoothing orders of the estimates \eqref{2p-sm}, \eqref{1p-sm}, and \eqref{eq:fixed}}
\label{fig2}
\end{figure}
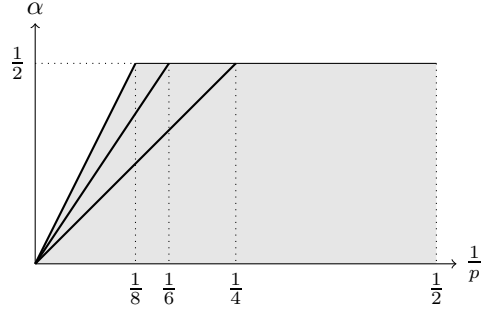

\begin{thm} 
\label{fixed}
Let $0<s<t$. If $\alpha<\min\lbrace 1/{2},2/{p} \rbrace$, then we have 
 \Be\label{eq:fixed}
\|{\mathcal A}_t^{s} f\|_{L^{p}_{\alpha}(\mathbb R^3)}\lesssim  \|f\|_{L^p(\mathbb R^3)}.
 \Ee
\end{thm}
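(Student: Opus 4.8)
The plan is to split $\sigma_t^{s}$ according to where the Gaussian curvature of $\mathbb T_t^s$ vanishes. Fix a smooth partition of unity $1=\chi_0(\theta)+\chi_+(\theta)+\chi_-(\theta)$ on $[0,2\pi)$ with $\supp\chi_\pm$ a small neighborhood of $\theta=\pm\pi/2$, and decompose $\sigma_t^{s}=\sigma_0+\sigma_++\sigma_-$ correspondingly, so that $\cA_t^{s}f=f\ast\sigma_0+f\ast\sigma_++f\ast\sigma_-$. On $\supp\chi_0$ the surface $\mathbb T_t^s$ has Gaussian curvature bounded away from zero, and after a further finite partition each piece carries an injective Gauss map; hence $\widehat{\sigma_0}(\xi)=|\xi|^{-1}e^{i\Psi(\xi)}b(\xi)+O(|\xi|^{-2})$ with $\Psi$ homogeneous of degree one with nondegenerate Hessian on $\{|\xi|=1\}$ and $b$ a symbol of order $0$. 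Thus $f\mapsto f\ast\sigma_0$ is, modulo a smoothing operator, a finite sum of oscillating multipliers of Miyachi--Peral type of order $-1$; interpolating the trivial bound on $L^\infty$ with the bound $L^2\to L^2_1$ furnished by the $|\xi|^{-1}$ decay gives $\|f\ast\sigma_0\|_{L^p_\alpha}\lesssim\|f\|_{L^p}$ for all $\alpha<2/p$ and $p\ge 2$. Since $2/p\ge\min\{1/2,2/p\}$, this term already obeys \eqref{eq:fixed}, and everything reduces to $\sigma_\pm$.

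By rotational symmetry it suffices to treat $\sigma_+$, a density on the part of $\mathbb T_t^s$ near the circle $\Phi_t^{s}(\pi/2,\cdot)$, where exactly one principal curvature vanishes, and does so simply. Writing $\xi=(\xi',\xi_3)\in\mathbb R^2\times\mathbb R$ and using that near $\theta=\pi/2$ the surface is, in cylindrical coordinates, a graph $x_3=g(r)$ with $g''\neq 0$ over $r$ near $t$, a stationary phase computation shows that $\widehat{\sigma_+}(\xi)$ is rapidly decreasing off the cone $|\xi_3|\gtrsim|\xi'|$, on which
\Be\label{plan-asymp}
\widehat{\sigma_+}(\xi)=|\xi|^{-1/2}\,(1+|\xi'|)^{-1/2}\,e^{\,i(s|\xi|\pm t|\xi'|)}\,b(\xi)+(\text{lower order}),\qquad b\in S^0 .
\Ee
The amplitude has the sharp order $1/2$ (reflecting the single nonvanishing principal curvature), while the phase $s|\xi|\pm t|\xi'|$ is of conic type. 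We therefore perform a Littlewood--Paley localization $|\xi|\sim\lambda$ together with a dyadic decomposition $|\xi'|\sim\mu$ ($1\le\mu\lesssim\lambda$, plus the piece $|\xi'|\lesssim1$); on each resulting piece the amplitude is $\sim\lambda^{-1/2}\mu^{-1/2}$, and for $\mu\lesssim\lambda^{3/4}$ the phase factorizes as
\[ e^{is|\xi|}\,e^{\pm it|\xi'|}=e^{is\xi_3}\cdot e^{\,is|\xi'|^2/(2\xi_3)}\cdot e^{\pm it|\xi'|}\cdot e^{iq(\xi)},\qquad q\in S^0 . \]

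The three genuine factors are handled in turn. The factor $e^{is\xi_3}$ is a translation in $x_3$, hence an $L^p$ isometry. The factor $e^{\pm it|\xi'|}$, restricted to $|\xi'|\sim\mu$, is the fixed-time planar half-wave propagator, of operator norm $\lesssim\mu^{1/2-1/p}$ on $L^p(\mathbb R^2)$ for $p\ge2$ by Miyachi--Peral, hence of the same norm on $L^p(\mathbb R^3)$ by Fubini in $x_3$. The factor $e^{is|\xi'|^2/(2\xi_3)}$, with $\xi_3\in[\lambda,2\lambda]$ a parameter, is a planar Schrödinger evolution over a time interval of length $\sim s/\lambda$; rescaling that interval to unit length and $\xi'$ to frequency $\sim\mu/\lambda^{1/2}$, it is controlled with only an $\lambda^{\ve}$ loss by the sharp $L^p$ local smoothing estimate for the free Schrödinger equation in the plane on the range $2\le p\le4$ --- equivalently, by Bourgain--Demeter's $\ell^2$ decoupling for the paraboloid (the remaining range $\mu\gtrsim\lambda^{3/4}$, where $|\xi|$ can no longer be linearized to a paraboloid over such a cap, is treated the same way after first reducing the spherical cap to the paraboloid, i.e.\ via decoupling for the cone). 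Multiplying the three bounds with the amplitude $\lambda^{-1/2}\mu^{-1/2}$ shows that the $(\lambda,\mu)$-piece of $f\ast\sigma_+$ has $L^p\to L^p$ norm $\lesssim\lambda^{\ve-1/2}\mu^{-1/p}$ for $2\le p\le4$; the resulting geometric sum in $\mu$ and then the $\lambda^{2\alpha}$-weighted square sum in $\lambda$ converge, giving $\|f\ast\sigma_+\|_{L^p_\alpha}\lesssim\|f\|_{L^p}$ for $\alpha<1/2$ and $2\le p\le4$. Interpolating this estimate (at $p=4$) with the trivial bound on $L^\infty$ extends it to $\alpha<2/p$ for all $p\ge4$, which together with the bound for $\sigma_0$ proves \eqref{eq:fixed}; the failure for $\alpha>\min\{1/2,2/p\}$ is the content of Section~\ref{counterexample section}.

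The step I expect to be the main obstacle is the analysis of $\sigma_\pm$: obtaining the expansion \eqref{plan-asymp} uniformly, setting up the plate decomposition so that the anisotropic amplitude $\lambda^{-1/2}(1+|\xi'|)^{-1/2}$ and the conic phase are correctly tracked through the parabolic rescaling at every dyadic scale $\mu\le\lambda$ (reconciling the paraboloid model valid for $\mu\lesssim\lambda^{3/4}$ with the conic behaviour for larger $\mu$), and checking that the composed planar wave and Schrödinger factors accumulate no loss beyond what the decoupling inequality already absorbs. It is precisely here that the restriction $2\le p\le4$ --- the endpoint of the relevant decoupling --- enters, the range $p\ge4$ being reached only by interpolation with $L^\infty$.
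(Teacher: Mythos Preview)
Your treatment of $\sigma_0$ is fine, but the analysis of $\sigma_\pm$ has a genuine gap: the claim that the ``Schr\"odinger factor'' $e^{is|\xi'|^2/(2\xi_3)}$, restricted to the block $|\xi'|\sim\mu$, $\xi_3\sim\lambda$, has $L^p\to L^p$ norm $\lesssim\lambda^\varepsilon$ is false. Test it on $f(x)=\psi(x_3)e^{i\lambda x_3}g(x')$ with $\operatorname{supp}\widehat g\subset\{|\xi'|\sim\mu\}$ and $\psi$ a unit bump; then $m(D)f\approx\psi(x_3)e^{i\lambda x_3}\bigl(e^{is\Delta'/(2\lambda)}g\bigr)(x')$, and the fixed-time planar Schr\"odinger propagator at time $\sim 1/\lambda$ and frequency $\mu$ has $L^p$ norm $\sim(\mu^2/\lambda)^{|1-2/p|}$ once $\mu\gtrsim\lambda^{1/2}$. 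At $\mu=\lambda$, $p=4$ this is $\lambda^{1/2}$, not $\lambda^\varepsilon$. Schr\"odinger local smoothing and paraboloid decoupling do not help: those are $L^p_{x',t}$ space--time estimates, whereas here $\xi_3$ is a Fourier variable, not a time variable, and varying $\xi_3$ over $[\lambda,2\lambda]$ produces no $L^p_t$ average that could absorb the loss. If you insert the correct factor bound and multiply the three norms, the $(\lambda,\mu)$-piece has $L^4$ norm $\lesssim(\lambda\mu)^{-1/2}\cdot\mu^{1/4}\cdot(\mu^2/\lambda)^{1/2}=\lambda^{-1}\mu^{3/4}$; the sum over $\mu\le\lambda$ is dominated by $\mu=\lambda$ and yields only $\lambda^{-1/4}$, hence $\alpha<1/4$, not $\alpha<1/2$. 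The loss comes from treating the half-wave and Schr\"odinger phases as independent when in fact they interact.

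The paper avoids this by never factoring the phase. It uses the same asymptotic expansion to reduce to the combined propagator $|\bar D|^{-1/2}|sD|^{-1/2}e^{i(\pm t|\bar D|\pm s|D|)}$ (see \eqref{central}), then obtains the sharp bound by interpolating between $p=2$ (Plancherel) and $p=\infty$. The $L^\infty$ endpoint is read off from the $p=q=\infty$ case of Proposition~\ref{flocal}, whose proof is an angular decomposition plus $\ell^2$ decoupling for the cone applied to each sector; at $p=\infty$ the cone decoupling is exact (kernel $L^1$ bounds on each plate), so no fixed-time oscillatory-multiplier loss enters. If you want to salvage your approach you would likewise have to keep the full phase $s|\xi|\pm t|\xi'|$ intact and estimate it directly, e.g.\ by proving the $L^\infty$ bound $\|\mathcal A_t^s f\|_\infty\lesssim\lambda^{1-\varepsilon}h^{-1}\|f\|_\infty$ on the block $|\bar\xi|\sim\lambda$, $|\xi_3|\sim h$ and interpolating with $L^2$; but that is essentially the paper's route.
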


 If $\alpha> \min\lbrace 1/{2},2/{p} \rbrace$,  $\tilde{\mathcal A}_t^{s}$ is not bounded from $L^p(\mathbb R^3)$ to $L^{p}_{\alpha}(\mathbb R^3)$ (Section \ref{counterexample section}). One can compare 
 the local smoothing estimates in  Theorem \ref{two para smoothing} and \ref{smoothing} with the 
 regularity estimate in Theorem \ref{fixed}.  The 2-parameter and 1-parameter local smoothing estimates have extra smoothing of order up to 
 $2/p$ and $1/p$, respectively,  when $p>8$ (see Figure \ref{fig2}).   
 
 For $p< 2$, it is easy to show that there is no additional  smoothing (local smoothing)  for the operators $\tilde{\mathcal A}_t^{s}$ and $\chi_0(t){\mathcal A}_t^{c_0t}$ when 
 compared with the estimates with fixed $s,t$ (Theorem \ref{fixed}). That is to say,    $\tilde{\mathcal A}_t^{s}$ fails to be bounded from $L^p(\mathbb R^3)$ to $L^p_\alpha(\mathbb R^5)$ and so does  $\chi_0(t){\mathcal A}_t^{c_0t}$ from $L^p(\mathbb R^3)$ to $L^p_\alpha(\mathbb R^4)$ if 
 $\alpha> \min (2/p',1/2)$ and $1\le p\le 2$. 
 
 Finally, we remark that our result for two parameter 2-dimensional tori can be extended to 
 multiparameter tori in higher dimensions. We will address this issue elsewhere. 

\subsubsection*{Organization of the paper.}  In Section \ref{sec2}, we obtain various preparatory estimates for the functions which are localized in the Fourier side. 
In Section \ref{sec3}  we prove  Theorem \ref{global maximal}, \ref{main thm},  and \ref{two para smoothing}. The proofs of Theorem \ref{smoothing} and \ref{fixed} are given in Section \ref{sec4}. 
Sharpness of the range of  $p,q$ in  Theorem \ref{main thm} and the smoothing orders in Theorem \ref{two para smoothing},  \ref{smoothing}, and \ref{fixed} are shown in Section \ref{counterexample section}. 

\subsection*{Notation}   We denote ${x}=(\bar x,x_3)\in \mathbb R^2\times \mathbb R$ and similarly  ${\xi}=(\bar\xi,\xi_3)\in \mathbb R^2\times \mathbb R$.  
In addition to $\,\widehat{}\,$ and ${}^\vee$, we occasionally use $\mathcal F$ and $\mathcal F^{-1}$ to denote the Fourier  and inverse Fourier transforms, respectively. 
For two given nonnegative quantity $A$ and $B$, we write $A\lesssim B$ if there is a constant $ C > 0$ such that
$B\le CA$.

\section{Local smoothing Estimates for $\mathcal A_{s,t}$}\label{sec2}
In this section we are mainly concerned with estimates for the averaging operator under frequency localization of the input function. 
We obtain the estimates by making use of  the decoupling inequality and the local smoothing estimate for the wave operator.

We denote 
\[{\mathbb A}_\lambda=   \{  \eta\in \R^2:  2^{-1}\lambda \le   |\eta|\le 2\lambda\}, \quad \bA_\lambda^\circ=   \{  \eta\in \R^2:     |\eta|\le 2\lambda\},\] 
respectively. Similarly, we   set $\mathbb I=[1,2]$ and $\mathbb I^\circ=[0,2]$, and we denote   $\mathbb I_\tau=\tau \mathbb I$ and $\mathbb I_\tau^\circ=\tau \mathbb I^\circ$ for $\tau\in (0,1]$.  

We now consider  the $2$-d wave operator 
\[
\mathcal W_\pm g(y,t)=\frac{1}{(2\pi)^2}\int_{\R^2}e^{i(y\cdot\eta\pm t|\eta|)}\widehat{g}(\eta)d\eta. 
\]
The following  is a consequence of 
the sharp local smoothing due to   Guth--Wang--Zhang \cite{GWZ} (also see \cite{SS}).

\begin{thm}\label{prop:locals}
Let  $2\le p\leq q$, $1/{p}+3/{q}\le  1$, and $\lambda \ge 1$. Then,  the estimate   
\begin{equation}\label{locals-est}
\big\Vert \mathcal W_\pm g \big\Vert_{L^q(\R^2\times \mathbb I^\circ)}\le C
\lambda^{( \frac{1}{2}+\frac{1}{p}-\frac{3}{q} )+\epsilon }\Vert g\Vert_{L^p} 
\end{equation}
holds for any $\epsilon>0$ whenever $\supp \widehat g\subset \bA_\lambda$.  
\end{thm}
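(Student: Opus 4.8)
The plan is to obtain \eqref{locals-est} by interpolating four ``endpoint'' estimates, one at each extreme point of the parameter region, and then tracking the single $\epsilon$-loss through the interpolation. Write $E(p,q)=\tfrac12+\tfrac1p-\tfrac3q$. The set $\{(1/p,1/q):2\le p\le q,\ 1/p+3/q\le1\}$ is the closed quadrilateral with vertices $(1/p,1/q)\in\{(0,0),(1/2,0),(1/2,1/6),(1/4,1/4)\}$, i.e. $(p,q)\in\{(\infty,\infty),(2,\infty),(2,6),(4,4)\}$, and $E$ is affine in $(1/p,1/q)$. So I would first prove, for $\supp\widehat g\subset\mathbb A_\lambda$ and $\lambda\ge1$, the bound
\[\|\mathcal W_\pm g\|_{L^q(\R^2\times\mathbb I^\circ)}\lesssim\lambda^{E(p,q)+\epsilon}\|g\|_{L^p}\]
at these four pairs, and then invoke iterated Riesz--Thorin interpolation over the quadrilateral; affineness of $E$ makes the interpolated exponent exactly $E(p,q)$, and three of the four endpoints will carry no $\epsilon$, so only $\lambda^\epsilon$ survives.

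The three ``easy'' corners are classical. For $(p,q)=(\infty,\infty)$ and $(p,q)=(2,\infty)$ I would argue by hand: letting $K_\lambda(\cdot,t)$ be the convolution kernel of $\mathcal W_\pm$ (smoothly cut off to $|\eta|\sim\lambda$), the standard asymptotics of the truncated half-wave kernel give $\|K_\lambda(\cdot,t)\|_{L^1(\R^2)}\lesssim\langle\lambda t\rangle^{1/2}\lesssim\lambda^{1/2}$ uniformly for $t\in\mathbb I^\circ$, so Young's inequality yields $\|\mathcal W_\pm g\|_{L^\infty}\lesssim\lambda^{1/2}\|g\|_{L^\infty}=\lambda^{E(\infty,\infty)}\|g\|_{L^\infty}$, while Cauchy--Schwarz, Plancherel and $|\mathbb A_\lambda|\sim\lambda^2$ give $\|\mathcal W_\pm g\|_{L^\infty}\lesssim\|\widehat g\|_{L^1}\lesssim\lambda\|g\|_{L^2}=\lambda^{E(2,\infty)}\|g\|_{L^2}$. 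For $(p,q)=(2,6)$ the required estimate is exactly the sharp $L^2$--$L^6$ space-time (Strichartz) bound for the $2$-dimensional half-wave propagator---equivalently the $L^2$ (Stein--Tomas) adjoint restriction estimate for the light cone in $\R^3$---which for frequency-localized data reads $\|\mathcal W_\pm g\|_{L^6(\R^2\times\R)}\lesssim\lambda^{1/2}\|g\|_{L^2}=\lambda^{E(2,6)}\|g\|_{L^2}$; restricting $t$ to $\mathbb I^\circ$ only shrinks the left side.

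The one genuine input is the corner $(p,q)=(4,4)$, where $E(4,4)=0$: here $\|\mathcal W_\pm g\|_{L^4(\R^2\times\mathbb I)}\lesssim\lambda^{\epsilon}\|g\|_{L^4}$ is precisely the sharp local smoothing estimate of Guth--Wang--Zhang \cite{GWZ} specialized to $p=4$ (for $p\ge6$ this would already follow from Bourgain--Demeter $\ell^2$-decoupling for the cone, but the corner $p=4$ really needs \cite{GWZ}). Two small adjustments remain: since $\mathcal W_-g=\overline{\mathcal W_+\bar g}$ it suffices to treat one sign; and since \cite{GWZ} is stated over $t\in\mathbb I=[1,2]$ while we want $t\in\mathbb I^\circ=[0,2]$, I would use parabolic rescaling, splitting $(0,2]=\bigcup_{k\ge0}[2^{-k},2^{-k+1}]$, applying the $\mathbb I$-estimate at frequency $2^{-k}\lambda$ on the $k$-th piece, and treating $0<t\lesssim\lambda^{-1}$ trivially (there $\mathcal W_\pm g(\cdot,t)$ equals $g$ convolved with a harmless bump). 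Tracking the scaling, the $k$-th piece contributes $\lambda^{E(p,q)+\epsilon}2^{k(1/p-1/2-\epsilon)}\|g\|_{L^p}$, and the series in $k$ converges because $p\ge2$; the identical remark upgrades the other three endpoints from $\mathbb I$ to $\mathbb I^\circ$.

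I do not anticipate any real obstacle once \cite{GWZ} is correctly imported: it carries all the curvature and oscillation content of \eqref{locals-est}, and everything else is standard harmonic analysis together with bookkeeping. The only places needing care are the two-dimensional (iterated) interpolation with an affine target exponent and the rescaling that passes from $[1,2]$ to $[0,2]$ without creating a divergent sum---the latter being precisely where the hypothesis $p\ge2$ is used.
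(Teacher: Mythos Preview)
Your proof is correct and follows essentially the same route as the paper: reduce to $\mathcal W_+$, establish the estimate on $\mathbb I=[1,2]$ by interpolating the Guth--Wang--Zhang local smoothing estimate against easy endpoint bounds, then pass to $\mathbb I^\circ=[0,2]$ by dyadic rescaling in $t$ (using $p\ge 2$ for summability). The only cosmetic difference is the choice of interpolation nodes---the paper interpolates the full GWZ diagonal $p=q\ge 4$ against the single $L^1\!\to\! L^\infty$ kernel bound $\lambda^{3/2}$, whereas you interpolate the four corners $(p,q)\in\{(\infty,\infty),(2,\infty),(2,6),(4,4)\}$, importing the cone Strichartz estimate at $(2,6)$; either set of ``easy'' endpoints suffices and the substantive input is GWZ at $p=q=4$ in both arguments.
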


\begin{proof} It is sufficient to show the estimate for $\mathcal W_+$ since that for $\mathcal W_-$  follows by conjugation and reflection.  When the interval $\bI^\circ$ is replaced by $\mathbb I$, the desired estimate follows from the  known estimates  and interpolation. 
Indeed, for $1\le p\le q\le \infty$ and $ 1/p+3/q\le 1$,  we have   
\Be
\label{o-locals}
\big\Vert \mathcal W_+ g \big\Vert_{L^q(\R^2\times \mathbb I)}\le C \lambda^{\frac{1}{2}+\frac{1}{p}-\frac{3}{q}+\epsilon }\Vert g\Vert_{L^p} 
\Ee
whenever $\supp \widehat g\subset \bA_\lambda$. This is a consequence of interpolation between the sharp $L^p$ local smoothing  estimates for  $p=q\ge 4$  (\cite{GWZ}) and the estimate $\Vert \mathcal W_+ g \Vert_{L^\infty(\R^2\times \mathbb I)}\le C \lambda^{\frac{3}{2}}\Vert g\Vert_{L^1}$  (e.g., see \cite{SS}).  

By   dyadically  decomposing $\bI^\circ$ away from $0$ and scaling,   one can deduce \eqref{locals-est} from \eqref{o-locals}. 
Indeed, since  
\Be 
\label{scaling} 
\mathcal W_+ g (x,\tau t)=\mathcal W_+ g(\tau\, \cdot ) ( x/\tau, t), 
\Ee
   rescaling gives the estimate 
\[ \big\Vert \mathcal W_+ g \big\Vert_{L^q(\R^2\times \mathbb I_\tau)}\le C\tau^{\frac12-\frac1p} \lambda^{ \frac{1}{2}+\frac{1}{p}-\frac{3}{q}+\epsilon } \Vert g\Vert_{L^p} 
\]
for any $\epsilon>0$ 
if $\supp \widehat g\subset \bA_\lambda$ and $\tau\lambda\gtrsim 1$.  When $\tau\sim \lambda^{-1}$, by 
scaling and an easy estimate we also have $\Vert \mathcal W_+ g \big\Vert_{L^q(\R^2\times \mathbb I_\tau^\circ)}\lesssim \lambda^{2/p-3/q} \|g\|_p$. Now, since $p\ge 2$, decomposing $\bI^\circ=(\bigcup_{\tau\ge (2 \lambda)^{-1}} \bI_\tau^\circ) \cup   \mathbb I_{\lambda^{-1}}^\circ$ and taking sum over the intervals,   we get 
\[ 
\big\Vert \mathcal W_+ g \big\Vert_{L^q(\R^2\times \mathbb I^\circ)}\le C
\max\lbrace\lambda^{ \frac{1}{2}+\frac{1}{p}-\frac{3}{q} +\epsilon }, \lambda^{\frac{2}{p}-\frac{3}{q} } \rbrace \Vert g\Vert_{L^p} 
\lesssim  \lambda^{ \frac{1}{2}+\frac{1}{p}-\frac{3}{q} +\epsilon }  \Vert g\Vert_{L^p} 
\] 
for any $\epsilon>0$.  
\end{proof} 

As a consequence of Theorem \ref{prop:locals} 
we also have the next lemma, which we use  later  to obtain estimate for functions whose Fourier supports are included  in a conical region with a small angle. 

\begin{lem}
\label{lem:locals}
Let $2\le p\le q\le \infty$, $1/{p}+3/{q}\le 1$, and $\lambda\ge 1$. Suppose that $\lambda\lesssim  h \lesssim   \lambda^2$. Then, for any $\epsilon>0$  there is a constant $C$ such that 
\Be \label{locals}
\big\|
\mathcal W_\pm g
\big\|_{L^q (\R^2\times \mathbb I^\circ)}
\le C
\lambda^{1-\frac1p-\frac3q} h^{\frac2p-\frac12 +\epsilon}  \Vert g\Vert_{L^p}  
\Ee
 whenever $\supp \widehat g\subset    \mathbb I_h \times \mathbb I_\lambda^\circ$. 
\end{lem}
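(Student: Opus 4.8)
The plan is to reduce the conical region $\mathbb I_h\times\mathbb I_\lambda^\circ$ to the annulus $\bA_\mu$ to which Theorem \ref{prop:locals} applies, via an anisotropic rescaling that straightens the cone. First I would observe that on $\supp\widehat g\subset\mathbb I_h\times\mathbb I_\lambda^\circ$ we have $|\eta|\sim h$ since $\lambda\lesssim h$, so $\mathcal W_\pm g$ is, up to harmless smooth truncation, a Fourier integral with phase $y\cdot\eta\pm t|\eta|$ and frequency size $h$. The key geometric point is that, after the substitution $\eta_1=h\zeta_1$, $\eta_2=(\lambda)\,\zeta_2$ — more precisely a dilation by $h$ in the first variable and by $\lambda$ in the second — the frequency support becomes comparable to a fixed box $\mathbb I\times\mathbb I^\circ$, and $|\eta|=h\sqrt{\zeta_1^2+(\lambda/h)^2\zeta_2^2}$. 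Writing $\delta=\lambda/h\in[\,c\lambda^{-1},C\,]$, the phase in the rescaled variables is $h(\,\bar y\cdot\zeta\pm t\,|\zeta|_\delta\,)$ where $|\zeta|_\delta:=\sqrt{\zeta_1^2+\delta^2\zeta_2^2}$ is an elliptic homogeneous degree-one function; on the box $\zeta_1\sim 1$ this is a smooth, nondegenerate (one-nonvanishing-curvature) phase uniformly in $\delta$. This is exactly the situation handled by the cone local smoothing / variable-coefficient extension of Theorem \ref{prop:locals}: such phases obey the same $L^p\to L^q$ bounds as $t|\eta|$ with constant $h^{(1/2+1/p-3/q)+\epsilon}$ at frequency $h$.

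Concretely, the steps I would carry out are: (i) insert a smooth partition of unity adapted to $\mathbb I_h\times\mathbb I_\lambda^\circ$ and reduce to $g$ with $\widehat g$ supported there and $|\eta|\sim h$; (ii) perform the anisotropic dilation $y\mapsto(h^{-1}y_1,\lambda^{-1}y_2)$, $\eta\mapsto(h\eta_1,\lambda\eta_2)$, which turns $\mathcal W_\pm g$ into an operator $\widetilde{\mathcal W}_\pm$ with phase $h(y\cdot\eta\pm t|\eta|_\delta)$ and $\widehat{\widetilde g}$ supported in a fixed box, and track the Jacobian factors $h^{-1}\lambda^{-1}$ from the $y$-change of variables (appearing with power $1/q-1/p$ after Hölder bookkeeping of the $L^q$ and $L^p$ norms); (iii) apply the local smoothing estimate for the elliptic cone phase $t|\eta|_\delta$ — which follows from Theorem \ref{prop:locals} by a further linear change of variables in $\zeta$ flattening $|\zeta|_\delta$ to $|\zeta|$ on the relevant piece, the distortion being $O(1)$ uniformly in $\delta\le C$ — to gain $h^{(1/2+1/p-3/q)+\epsilon}\|\widetilde g\|_{L^p}$; (iv) undo the scaling and collect exponents. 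Counting powers: the $y$-Jacobian contributes $(h\lambda)^{1/q-1/p}$ to the $L^q$-norm-to-$L^p$-norm ratio, the frequency-side rescaling of $g$ contributes $(h\lambda)^{-1/p}$ times $(h\lambda)^{1/p}$ back (so cancels with the $L^p$ normalization), and the $t$-variable is untouched; combining with $h^{1/2+1/p-3/q+\epsilon}$ and simplifying using $\delta=\lambda/h$ yields $\lambda^{1-1/p-3/q}h^{2/p-1/2+\epsilon}$, matching \eqref{locals}. I would double-check the endpoint scaling $h\sim\lambda$ (where the box is essentially $\bA_\lambda$ and the bound must reduce to \eqref{locals-est}) and $h\sim\lambda^2$ as sanity checks.

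The main obstacle, and the step deserving the most care, is justifying step (iii): that the sharp local smoothing bound of Theorem \ref{prop:locals}, proved for the exact cone phase $t|\eta|$, transfers with a \emph{uniform} constant to the family of elliptic phases $t|\eta|_\delta$ for $\delta\in(0,C]$. One clean way is to note that $|\eta|_\delta$ on the sector $\eta_1\sim1$, $|\eta_2|\lesssim\delta$ is, after the linear map $(\eta_1,\eta_2)\mapsto(\eta_1,\delta\eta_2)$, exactly $|\eta|$ restricted to a fixed subsector — so no new theorem is needed, only invariance of \eqref{locals-est} under such a fixed linear change of variables (which alters the constant by an absolute factor and preserves the $\bA_\lambda$ annular localization). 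The slightly delicate point is that this linear map acts on frequency, hence its transpose-inverse acts on $y$, introducing yet another $O(1)$ Jacobian that must be folded into the bookkeeping; since $\delta$ may be as small as $\lambda^{-1}$, one must verify this map does not shrink the frequency support below scale $1$ in any direction, which is where the hypothesis $\lambda\lesssim h$ (equivalently $\delta\lesssim 1$) and $h\lesssim\lambda^2$ (equivalently $\delta\gtrsim\lambda^{-1}$, keeping the rescaled frequencies $\gtrsim\lambda^{-1}\cdot h = \lambda$, still $\ge1$) are used. Everything else — the partition of unity, the change of variables, the exponent arithmetic — is routine.
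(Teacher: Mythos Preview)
Your step (iii) has a genuine gap. The linear map $(\zeta_1,\zeta_2)\mapsto(\zeta_1,\delta\zeta_2)$ that flattens $|\zeta|_\delta$ to $|\zeta|$ does \emph{not} have distortion $O(1)$ uniformly in $\delta\in(0,C]$: its determinant is $\delta$, which can be as small as $\lambda/h\sim\lambda^{-1}$. Equivalently, the nonzero principal curvature of the cone $\tau=|\zeta|_\delta$ on the box $\zeta_1\sim1$, $|\zeta_2|\lesssim1$ is $\partial_{\zeta_2}^2|\zeta|_\delta=\delta^2\zeta_1^2/|\zeta|_\delta^3\sim\delta^2$, which degenerates as $\delta\to0$, so the local smoothing constant for $|\zeta|_\delta$ cannot be uniform in $\delta$. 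In fact, composing your two changes of variables --- the anisotropic dilation $(\eta_1,\eta_2)=(h\zeta_1,\lambda\zeta_2)$ followed by the flattening $(\zeta_1',\zeta_2')=(\zeta_1,\delta\zeta_2)$ --- gives $(\zeta_1',\zeta_2')=(\eta_1/h,\eta_2/h)$, the \emph{isotropic} dilation by $h^{-1}$. So your approach collapses to applying \eqref{locals-est} directly at frequency $h$, which yields $h^{1/2+1/p-3/q+\epsilon}$; this matches the target only on the critical line $1/p+3/q=1$ and is strictly weaker off it (by a factor $(h/\lambda)^{1-1/p-3/q}$). Your exponent arithmetic in (iv) does not actually produce the claimed answer either: $(h\lambda)^{1/q-1/p}\cdot h^{1/2+1/p-3/q+\epsilon}=\lambda^{1/q-1/p}h^{1/2-2/q+\epsilon}$, not $\lambda^{1-1/p-3/q}h^{2/p-1/2+\epsilon}$.

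The paper's argument is different and avoids this trap: it interpolates between the endpoints $(p,q)=(4,4)$, $(2,6)$, $(2,\infty)$, and $(\infty,\infty)$. On the critical line the narrow angular support gives no gain, so \eqref{locals-est} at frequency $h$ suffices there. The improvement off the critical line is captured entirely at $p=q=\infty$: one covers $\mathbb I_h\times\mathbb I_\lambda^\circ$ by $\sim\lambda h^{-1/2}$ sectors of angular width $h^{-1/2}$, each contributing a kernel with uniformly bounded $L^1$ norm, giving $\|\mathcal W_\pm g\|_{L^\infty}\lesssim\lambda h^{-1/2}\|g\|_{L^\infty}$. This sector count is where the narrow-angle gain enters, and it cannot be recovered by your rescaling.
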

\begin{proof}
As before, it is sufficient to consider $\mathcal W_+$. 
By interpolation we only need to check the estimate \eqref{locals} for $(p,q)=(4,4)$, $(2,6)$, $(2,\infty)$, and $(\infty,\infty)$. 
 Since $\lambda\le h$, $\supp \widehat g\subset \{\eta: |\eta|\sim h\}$.  So,  the estimate \eqref{locals}  for $(p,q)=(4,4)$, $(2,6)$, and $(2,\infty)$ is clear from   \eqref{locals-est}.  
  Thus, it suffices to verify  \eqref{locals} for $p=q=\infty$, that is to say, 
  \[ \|
\mathcal W_+ g
\|_{L^\infty(\R^2\times \mathbb I^\circ)}
\lesssim 
\lambda  h^{-1/2}  \Vert g\Vert_{L^\infty}\] 
whenever $\supp \widehat g\subset    \mathbb I_h \times \mathbb I_\lambda^\circ$.  To show this, we cover $\mathbb I_h \times \mathbb I_\lambda^\circ$ by  as many as  $C\lambda h^{-1/2}$ boundedly overlapping  rectangles of dimension $h \times h^{1/2}$  whose principal axis contains the origin, and consider a partition of unity $\{\tilde \omega _\nu\}$  subordinated to those rectangles such that   $(\alpha,\beta)$-th
derivatives of $\tilde \omega_\nu$ in the directions of the principal and its normal directions is  bounded by $Ch^{-\alpha}h^{-\beta/2}$. 
(In fact, one can also use $\omega_\nu(\eta)$ in the proof of Proposition \ref{flocal} below replacing $\lambda$ by $h$.) 
Consequently, we have  $\mathcal W_+  g =\sum_\nu \mathcal W_+  \chi _\nu(D) g$. It is easy to see  that the kernel of  the operator $g\to \mathcal W_+  \chi _\nu(D) g$  has a uniformly bounded $L^1$ norm for $t\in \bI^\circ, \nu$. Therefore, we get the desired estimate.
\end{proof}

\subsection{Two-parameter propagator}
\label{sec:2-p}
We define an operator $\mathcal U$ by 
\[
\mathcal{U} f({x},t,s)=
\int e^{i({x}\cdot{\xi}+t|\bar\xi|+s|{\xi}|)}\widehat{f}({\xi})d{\xi},
\] 
 is closely related to the averaging operator $\cA_t^s$ and the wave  operator $\mathcal W_+$. In fact, we obtain the estimates for $\mathcal U$  making use of 
those for $\mathcal W_+$.

Let $\mathbb J_0=\{ (t,s): 0<s<c_0 t\}$ and $\mathbb J_\tau=(\mathbb I\times \mathbb I_\tau)\cap \mathbb J_0$. To obtain the required  estimates  for our purpose, we consider the estimates over  $\mathbb R^3\times \mathbb J_\tau$ for small $\tau$.

\begin{prop}\label{flocal}
Let $2\leq p\leq q\leq \infty$ satisfy ${1}/{p}+{3}/{q}\le 1$, and let   $0<\tau\le 1$ and $\lambda\ge \tau^{-1} $. 
$(a)$ If  $ \lambda \lesssim h   \lesssim  \tau \lambda^2 $, then  for any $\epsilon>0$ the estimate 
\begin{align}
\label{plocal1}
\Vert \mathcal{U} f\Vert_{L^q(\R^3\times \mathbb J_\tau)}\lesssim 
\tau^{(\frac{1}{2}-\frac{1}{p})}   \lambda^{\frac{3}{2}-\frac{1}{p}-\frac{5}{q}}  h^{-\frac12+\frac2p+\epsilon} \Vert f\Vert_{L^p} \end{align}
holds whenever $\supp \widehat f \subset  \mathbb A_\lambda\times \mathbb I_h$. 
Moreover, $(b)$ if $\supp \widehat f \subset  \mathbb A_\lambda\times \mathbb I_\lambda^\circ$, then we have the estimate 
\eqref{plocal1} with $h=\lambda$. $(c)$ If $h  \gtrsim  \tau \lambda^2,$ then we have 
\begin{align}
\label{plocal2}
\Vert \mathcal{U} f\Vert_{L^q(\R^3\times \mathbb J_\tau)}\lesssim  \tau^{\frac1q} \lambda^{\frac{1}{2}+\frac{1}{p}-\frac{3}{q}+\epsilon } h^{\frac1p-\frac1q} \Vert f\Vert_{L^p}
\end{align}
whenever $\supp \widehat f \subset  \mathbb A_\lambda\times \mathbb I_h$. 
\end{prop}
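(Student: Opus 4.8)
The plan is to treat the two regimes separately, exploiting in each the appropriate one of the two wave structures carried by $\mathcal U$: the cone $\tau_1=|\bar\xi|$ dual to $t$, and the cone $\tau_2=|\xi|$ dual to $s$. The dividing line is geometric: once $s\sim\tau$ is taken into account, the quadratic deviation of $|\xi|$ from its linearisation, in the radial direction over a $\sim\lambda$-ball in $\bar\xi$, is comparable to $\tau\lambda^2/h$; so $e^{is|\xi|}$ is a flat modulation on all of $\mathbb A_\lambda\times\mathbb I_h$ precisely when $h\gtrsim\tau\lambda^2$, which is case $(c)$, whereas for $\lambda\lesssim h\lesssim\tau\lambda^2$ (case $(a)$, and case $(b)$ with $h=\lambda$) one must still use the $s$-oscillation.

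Case $(c)$ I would do directly. Estimating the Hessian of $|\xi|$ on $\mathbb A_\lambda\times\mathbb I_h$ under $h\gtrsim\tau\lambda^2$ gives $|\xi|=L(\xi)+r(\xi)$, where $L$ is the first-order Taylor polynomial at the centre of the support and $|s\,r(\xi)|\lesssim1$ with all $\mathbb A_\lambda\times\mathbb I_h$-adapted derivatives of $s\,r$ bounded uniformly for $s\sim\tau$; hence $e^{is|\xi|}=e^{isL(\xi)}m_s(\xi)$ with $m_s$ a uniformly bounded symbol and $e^{isL(\xi)}$ a modulation, so for each fixed $s$ the operator $\mathcal U f(\cdot,\cdot,s)$ equals, up to a translation and the harmless multiplier $m_s(D)$, the operator $\mathcal W^{(3)}_+g(x,t):=\int e^{i(x\cdot\xi+t|\bar\xi|)}\widehat g(\xi)\,d\xi$, which is the $2$-d wave $\mathcal W_+$ in $(\bar x,t)$ with $x_3$ inert. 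For the latter, Bernstein in $x_3$ (the $\xi_3$-support has length $\sim h$), Minkowski's inequality, and Theorem \ref{prop:locals} applied in the $x_3$-slices give $\|\mathcal W^{(3)}_+g\|_{L^q(\mathbb R^3\times\mathbb I)}\lesssim h^{1/p-1/q}\,\lambda^{1/2+1/p-3/q+\epsilon}\|g\|_{L^p}$. Taking the supremum over $s\sim\tau$ and using $\|F\|_{L^q(\mathbb R^3\times\mathbb J_\tau)}\le\tau^{1/q}\sup_s\|F(\cdot,\cdot,s)\|_{L^q(\mathbb R^3\times\mathbb I)}$ yields \eqref{plocal2}.

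For $(a)$ and $(b)$ I would use cone decoupling together with the $2$-d estimates of Section \ref{sec2}. As a function of $(\bar x,t)$ (with $x_3,s$ inert), $\mathcal U f$ has Fourier support on the cone $\{\tau_1=|\bar\xi|\}$, so Bourgain--Demeter decoupling into $\lambda^{-1/2}$-sectors $\kappa$ of $\bar\xi$, followed by Minkowski's inequality in $(x_3,s)$, gives
\[
\|\mathcal U f\|_{L^q(\mathbb R^3\times\mathbb J_\tau)}\lesssim D_q(\lambda)\Big(\sum_\kappa\|\mathcal U f_\kappa\|_{L^q(\mathbb R^3\times\mathbb J_\tau)}^2\Big)^{1/2},
\]
where $D_q(\lambda)\lesssim_\epsilon\lambda^\epsilon$ for $2\le q\le6$ and $D_q(\lambda)\lesssim_\epsilon\lambda^{1/4-3/(2q)+\epsilon}$ for $q\ge6$. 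On a single sector, writing $\bar\xi=r\omega_\kappa+v'\omega_\kappa^\perp$ ($r\sim\lambda$, $|v'|\lesssim\lambda^{1/2}$), one has $t|\bar\xi|=tr+t\cdot O(1)$, and since $h\lesssim\tau\lambda^2$ keeps the $v'$- and $\xi_3$-curvature corrections of $s|\xi|$ bounded, $s|\xi|=s\sqrt{r^2+\xi_3^2}+s\cdot O(1)$; absorbing the $O(1)$ phases into uniformly bounded symbols (expand the exponentials — the powers of $t,s$ produced are harmless since $|\mathbb I|=1$ and $s\sim\tau$) and applying the orthogonal change $\bar x\mapsto(\bar x\cdot\omega_\kappa,\bar x\cdot\omega_\kappa^\perp)$ turns $\mathcal U f_\kappa$ into the $2$-d wave $\mathcal W_s$ in the frequencies $(r,\xi_3)$, evaluated at $(\bar x\cdot\omega_\kappa+t,\,x_3)$, tensored with a $1$-d frequency $v'$ of extent $\lambda^{1/2}$. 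Since the $t$-integration over $\mathbb I$ costs only a factor $1$, a Bernstein bound in $\bar x\cdot\omega_\kappa^\perp$, the rescaling $s=\tau\sigma$, and Lemma \ref{lem:locals} with $(h',\lambda')=(\tau h,\tau\lambda)$ in case $(a)$ --- whose hypothesis $\lambda'\lesssim h'\lesssim(\lambda')^2$ is precisely $\lambda\lesssim h\lesssim\tau\lambda^2$ --- or Theorem \ref{prop:locals} at frequency $\tau\lambda$ in case $(b)$, yield
\[
\|\mathcal U f_\kappa\|_{L^q(\mathbb R^3\times\mathbb J_\tau)}\lesssim\tau^{1/2-1/p}\,\lambda^{1-\frac{1}{2p}-\frac{7}{2q}+\epsilon}\,h^{\frac{2}{p}-\frac{1}{2}+\epsilon}\,\|f_\kappa\|_{L^p}.
\]
Summing over the $\sim\lambda^{1/2}$ sectors by Hölder's inequality and the Rubio de Francia sectorial square-function inequality (valid for $p\ge2$), and inserting $D_q(\lambda)$, gives \eqref{plocal1} (with $h=\lambda$ in case $(b)$).

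The crux is the middle step of $(a),(b)$: recognising that the $\lambda^{-1/2}$-sectoring spends exactly the curvature of the cone $\{\tau_1=|\bar\xi|\}$, leaving a genuine $2$-d wave in $(r,\xi_3)$ whose frequency plate $\mathbb I_{\tau\lambda}\times\mathbb I_{\tau h}$ has precisely the aspect ratio for which Lemma \ref{lem:locals} is sharp. The remaining points --- the symbol estimates for the absorbed phases, which is where the hypotheses $h\lesssim\tau\lambda^2$ in $(a)$ and $h\gtrsim\tau\lambda^2$ in $(c)$ are genuinely used, and the bookkeeping of the powers of $\tau,\lambda,h$ through the rescalings, Bernstein factors, decoupling constant and square-function estimate --- are routine.
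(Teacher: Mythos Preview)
Your treatment of case $(c)$ and the per-sector estimate in $(a),(b)$ are essentially the paper's: linearise the harmless phase (the paper uses $e^{is|\xi_3|}$ where you use $e^{isL(\xi)}$, but the point is the same), then Bernstein in the transverse variable plus the rescaled $2$-d local smoothing of Lemma~\ref{lem:locals}, whose hypothesis is exactly $\lambda\lesssim h\lesssim\tau\lambda^2$. The bookkeeping you sketch gives the per-sector bound $\tau^{1/2-1/p}\lambda^{1-1/(2p)-7/(2q)}h^{2/p-1/2+\epsilon}\|f_\kappa\|_{L^p}$, matching the paper's \eqref{claim1-1}.

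There is, however, a genuine gap at the endpoint $p=q=4$. The $\ell^2$ cone-decoupling constant satisfies $D_q(\lambda)\lesssim\lambda^{1/4-3/(2q)+\epsilon}$ only for $q\ge6$; for $q<6$ it is merely $\lambda^\epsilon$. Combining your per-sector bound with H\"older from $\ell^2$ to $\ell^p$ over the $\sim\lambda^{1/2}$ sectors costs $\lambda^{1/4-1/(2p)}$, and the resulting $\lambda$-exponent is $5/4-1/p-7/(2q)$, which equals the target $3/2-1/p-5/q$ only when $q=6$ and is strictly worse for $q<6$ (by $\lambda^{1/8}$ at $p=q=4$). No appeal to a sectorial square-function inequality repairs this: for $p\ge2$ Minkowski goes the wrong way, $\big(\sum_\kappa\|f_\kappa\|_{L^p}^2\big)^{1/2}\ge\|(\sum_\kappa|f_\kappa|^2)^{1/2}\|_{L^p}$, so Rubio de Francia does not bound the quantity you need. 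The paper therefore proves $(4,4)$ by an entirely different argument: writing $\mathcal U f$ via the polar change $\xi_2\to\pm\sqrt{\rho^2-\xi_1^2}$ as a $2$-d wave in $(x_3,t)$ with $s$ as time, applying the $L^4$ local smoothing \eqref{o-locals} together with finite speed of propagation in that direction, and then a second application of local smoothing in $(\bar x,t)$. Interpolating this with the $q\ge6$ decoupling estimate closes the full range. Your proposal is missing this step.
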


For a bounded measurable function $m$, we denote by $m(D)$ the multiplier operator 
defined by $\mathcal F(m(D)f)(\xi)= m(\xi)\widehat f(\xi)$. In what follows, we occasionally use the following lemma.  

\begin{lem}\label{fff} Let $\xi=(\xi', \xi'')\in \mathbb R^{k}\times \mathbb R^{d-k}$. Let $\chi$ be an integrable  function on $\mathbb R^{k}$  such that $\widehat \chi$ is also integrable. 
Suppose $\|m(D)f\|_q\le B\|f\|_p$ for a constant $B>0$, then we have
$\|m (D)\chi(D')f\|_q\le B \|\widehat\chi \|_1 \|f\|_p$. 
\end{lem}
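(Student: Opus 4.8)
The plan is to prove Lemma~\ref{fff} directly from the hypothesis by writing the composition $m(D)\chi(D')$ as an average of the operators $m(D)$ twisted by modulations in the $\xi'$ variables. First I would write, using Fourier inversion in the first $k$ coordinates,
\[
\chi(\xi') = \int_{\mathbb R^k} \widehat\chi(z)\, e^{i z\cdot \xi'}\, dz,
\]
which is legitimate since both $\chi$ and $\widehat\chi$ are integrable (so $\chi$ is bounded and continuous, and the inversion holds pointwise). Consequently, for $f$ in a suitable dense class,
\[
m(D)\chi(D') f = \int_{\mathbb R^k} \widehat\chi(z)\, m(D)\big[ e^{i z\cdot D'} f \big]\, dz,
\]
where $e^{iz\cdot D'}f(x) = f(x' + z, x'')$ is just translation in the first $k$ variables. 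The point is that $m(D)$ commutes with these translations, so $\|m(D)[e^{iz\cdot D'}f]\|_q = \|e^{iz\cdot D'}[m(D)f]\|_q = \|m(D)f\|_q \le B\|f\|_p$ for every $z$, by translation invariance of the $L^q$ and $L^p$ norms.

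Next I would simply apply Minkowski's integral inequality to the vector-valued integral:
\[
\big\| m(D)\chi(D') f\big\|_q \le \int_{\mathbb R^k} |\widehat\chi(z)|\, \big\| m(D)[e^{iz\cdot D'}f]\big\|_q\, dz \le \|\widehat\chi\|_1\, B\, \|f\|_p,
\]
which is exactly the claimed bound. A small amount of care is needed to justify that the integral defining $m(D)\chi(D')f$ converges as a Bochner integral in $L^q$ (or that the identity holds after testing against Schwartz functions), but since $\widehat\chi \in L^1$ and the integrand has uniformly bounded $L^q$ norm, this is routine; one can first argue for $f$ Schwartz (or $f$ with compactly supported Fourier transform) and then pass to the limit by density, using that both sides are bounded operators on the relevant space once the a priori estimate is in hand.

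I do not anticipate a genuine obstacle here: the only mildly delicate point is the measure-theoretic justification of interchanging the operator $m(D)$ with the integral over $z$, and of applying Minkowski's inequality to an $L^q$-valued integral, both of which are standard once one works on a dense subclass. The essential mechanism — that convolution-type multipliers commute with translations and that $\|\widehat\chi\|_1$ controls the ``cost'' of averaging translates — is the entire content of the lemma.
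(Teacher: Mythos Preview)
Your proof is correct and follows essentially the same approach as the paper: Fourier inversion expresses $\chi(D')$ as an integral of translations in the first $k$ variables, after which Minkowski's inequality and translation invariance of the $L^p$ norms yield the bound. The paper states the identity $m(D)\chi(D')f(x) = (2\pi)^{-k}\int \widehat\chi(y)\,(m(D)f)(x'+y,x'')\,dy$ directly and applies Minkowski, which is exactly your argument up to the commutation $m(D)e^{iz\cdot D'} = e^{iz\cdot D'}m(D)$ that you make explicit.
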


This lemma  follows from  the identity 
\[  m (D)\chi(D')f(x) = (2\pi)^{-k} \int_{\mathbb R^k}   \widehat \chi(y)(m(D) f)(x'+y, x'') dy,\] 
which is a simple consequence of the Fourier inversion.  The desired inequality is immediate   from Minkowski's inequality and translation invariance of $L^p$ norm.

\begin{proof}[Proof of Proposition \ref{flocal}] 
We make use of the decoupling inequality  for the cone \cite{BD}  and the sharp local smoothing estimate (Lemma \ref{lem:locals}) for 
$\mathcal W_+$. 

We first show the case $(a)$ where $ \lambda \lesssim h   \lesssim  \tau \lambda^2 $. To this end, we prove the estimate \eqref{plocal1} under the additional assumption that $q\geq 6$. 
We subsequently extend the range by interpolation between the consequent estimates and  \eqref{plocal1} for $(p,q)=(4,4)$, which we prove later. 

Fixing $x_3$ and $s$,  we define an operator $\mathcal{T}_{x_3}^s$ by setting 
\[ \widehat{\mathcal{T}_{x_3}^s F}(\bar \xi)=\int e^{i(x_3\xi_3+s|{\xi}|)}\widehat{F}(\bar\xi, \xi_3)d\xi_3, \quad \xi=(\bxi, \xi_3). \]
Then, we observe that 
\[ \mathcal{U} f({x},t,s)=\mathcal{W}  (\mathcal{T}_{x_3,s}f)(\bar x,t). \]

Let $\mathfrak{V}_\lambda\subset \mathbb S^1 $ be a collection of   $\sim \lambda^{-1/2}$-separated points. By $\{ w_\nu\}_{\nu\in \mathfrak{V}_\lambda}$ we  denote  a partition of unity on the unit circle  $\mathbb S^1$ such that 
$w_\nu$ is supported in an arc centered at $\nu$ of length about $\lambda^{-1/2}$ and $|(d/d\theta)^k w_\nu|\lesssim \lambda^{k/2}$. 
For each $\nu\in\mathfrak{V}_\lambda$, we set $\omega_\nu(\bxi)=w_\nu(\bxi/|\bxi|)$ and  
\[ \mathcal{W}_\nu g(\bx,t)= \int e^{i(\bx\cdot\bxi+t|\bxi|)} \omega_{\nu}(\bxi) \widehat{g}(\bxi)d\bxi.\]

Let $\tilde\chi \in \mathcal S(\mathbb R)$ such that $\tchi\ge 1$ on $\bI$ and $\supp \cf (\tchi)\subset [-1/2,1/2]$. 
Note that  the Fourier transform of $\tchi(t)\mathcal{W}_\nu g(\bx,t)$ is supported in 
the set $\{(\bxi, \tau) :  |\tau-|\bxi||\lesssim 1, \bxi/|\bxi|\in\supp \omega_\nu, |\bxi|\sim \lambda  \}$ if $\supp \widehat g\subset \bA_\lambda$. Thus, by Bourgain--Demeter's $l^2$ decoupling inequality \cite{BD} followed by H\"older's inequality, we have 
\Be 
\label{BD-}
\big\|   \sum_{\nu\in\mathfrak{V}_\lambda}\mathcal{W}_\nu g\big\|_{L^q_{\bx, t}(\mathbb R^2\times \mathbb I)}
\lesssim  \lambda^{\frac{1}{2}-\frac1{2p}-\frac{3}{2q}+\epsilon } 
\Big( \sum_{\nu\in\mathfrak{V}_\lambda}\big\| \tchi(t)\mathcal{W}_\nu g\big\|_{L^q_{\bx,t} (\mathbb R^3)}^p \Big)^{1/{p}}   
\Ee
for any  $\epsilon>0$, $q\ge 6$, and $p\ge 2$,  provided that $\supp \widehat g\subset \bA_\lambda$.  
Note that $\mathcal{U} f({x},t,s)=\sum_\nu\mathcal{W}_\nu  (\mathcal{T}_{x_3}^sf)(\bar x,t)$ and $\mathcal{W}_\nu  (\mathcal{T}_{x_3}^sf)(\bar x,t)=\mathcal{U} \omega_\nu(\bar D) f({x},t,s)$. 
Since $\supp \widehat f \subset  \mathbb A_\lambda\times \mathbb I_h$,  freezing $s, x_3$,  
we apply  the inequality \eqref{BD-}, followed by Minkowski's inequality,  to get 
\Be
\label{decouple-}
\Vert \mathcal{U} f \Vert_{L^q(\R^3\times \mathbb J_\tau) } \lesssim  \lambda^{\frac{1}{2}-\frac1{2p}-\frac{3}{2q}+\epsilon }
\Big( \sum_{\nu\in\mathfrak{V}_\lambda}\big\| \tchi(t) \mathcal{U} f_\nu \big\|_{L^q_{x,t,s} (\R^4\times  \mathbb I_\tau)}^p \Big)^{1/{p}}
\Ee
for $q\geq 6$ where $f_\nu=\omega_\nu(\bar D)  f$. We now claim that 
\Be 
\label{claim1-1}
\Vert  \tchi(t) \mathcal{U} f_\nu \Vert_{L^q(\R^4\times \mathbb I_\tau) } \lesssim 
\tau^{(\frac{1}{2}-\frac{1}{p})}   \lambda^{1-\frac{1}{2p}-\frac{7}{2q}}  h^{\frac2p-\frac12+\epsilon} \Vert f_\nu\Vert_{L^p} 
\Ee
holds for $1/p+3/q\le 1$.   Note that $ (\sum_\nu \|f_\nu\|^p_p)^{1/p}$ for $1\le p\le \infty$. Thus,  from \eqref{decouple-} and \eqref{claim1-1}  the estimate \eqref{plocal1}  follows for $q\ge 6$.

To obtain \eqref{claim1-1}, we begin by showing 
\Be 
\label{claim2-1}
\big\| \tchi(t) \mathcal{U} f_\nu (\cdot,s)\big\|_{L^q_{x,t} (\mathbb R^4)}\le C\big\|  e^{is|D|} f_\nu\big\|_{L^q_{x} (\mathbb R^3)}.
\Ee
To do this, we apply the argument used to show Lemma \ref{fff}.  Let us set 
\[  \tilde  \chi_\nu(t,\bxi) =   e^{it(|\bxi|-\bxi\cdot\nu)} \widetilde \omega_\nu (\bxi)  \varphi(\bxi/\lambda)  \]
so that $\tilde  \chi_\nu(t,\bxi) \widehat f_\nu(\xi)=  e^{it(|\bxi|-\bxi\cdot\nu)} \widehat f_\nu(\xi)$. Here $\widetilde \omega_\nu (\bxi) $ is a angular cutoff function given in the same manner as $\omega_\nu (\bxi)$
such that $\widetilde \omega_\nu \omega_\nu=\omega_\nu$. 
Then, a computation shows that 
\[ |( \nu\cdot\nabla_\bxi )^k  (\nu_\perp \cdot \nabla_\bxi)^l \tilde  \chi_{\nu} (t,\bxi)| \lesssim   (1+|t|)^{k+l}\lambda^{-k}\lambda^{-\frac l2}(1+ \lambda^{-1} |\nu\cdot \bxi|)^{-N} (1+  \lambda^{-\frac12}|\nu^\perp\cdot \bxi|)^{-N}\]
for any  $N$ where $\nu_\perp $ denotes a unit vector orthogonal to $\nu$. Indeed, this can be easily seen via rotation and scaling (i.e., setting $\nu=e_1$ and scaling $\xi_1\to \lambda \xi_1$ and $\xi_2\to \lambda^{1/2} \xi_2$). Thus, using the above inequality for $0\le k, l\le 2$ and integration by parts, we see   $\|( \tilde \chi_\nu(t, \cdot))^\vee\|_1\le  C(1+|t|)^{4}$ for a constant $C>0$. Since $\mathcal{U} f_\nu(x,t,s)=\mathcal F^{-1}( e^{i(t\nu\cdot{\bxi}  +s|\xi|)} \tilde  \chi_{\nu} (t,\bxi)\widehat{f}_\nu({\xi}))$,  we have
\[ \mathcal{U} f_\nu(x,t,s)= \int  ( \tilde \chi_{\nu}(t,\cdot))^\vee(\eta)\, e^{is|D|} f_\nu(\bar x-\eta+t\nu,x_3) d\eta.\]
By Minkowski's inequality and changing variables $\bx\to \bar x+\eta-t\nu$ we see that 
the left hand side of \eqref{claim2-1} is bounded by 
$C \| \tchi(t) (1+|t|)^4 \|_{L^q_t(\mathbb R^1)} \big\|  e^{is|D|} f_\nu\big\|_{L^q_{x} (\mathbb R^3)}$.  
Therefore, we get the desired inequality \eqref{claim2-1}.

Let us set 
\[  \chi_s (\xi) = e^{is(|\xi|-|\xi^\nu|)} \widetilde \omega_\nu (\bxi)  \varphi(\bxi/\lambda) \varphi(\xi_3/h),  \] 
where $\xi^\nu:=(\bxi\cdot\nu, \xi_3)$.  Since $\lambda\lesssim h$, similarly as before,  one can easily see $\| \widehat \chi_s\|_1\le C$ for a constant. Thus, by Lemma \ref{fff} we have $\|  e^{is|D|} f_\nu \|_{L^q_x }\lesssim  \|  e^{is|\bar D^\nu|} f_\nu \|_{L^q_x }  $.  Combining this and \eqref{claim2-1} yields 
\[  \big\| \mathcal{U} f_\nu \big\|_{L^q(\R^3\times \mathbb J_\tau)} \lesssim    \|e^{is|\bar D^\nu|} f_\nu \|_{L^q_{x,s}(\mathbb R^3\times \bI_\tau)} 
\lesssim   \lambda^{\frac1{2p}-\frac1{2q}}  \|e^{is|\bar D^\nu|} f_\nu \|_{L^p_{\bx_\nu'} (L^q_{ \bx_\nu, x_3, s}(\mathbb R^2\times \bI_\tau))},   \] 
where $\bx_\nu=\nu\cdot \bx$ and  $\bx_\nu'=\nu_\perp \cdot \bx$. 
For the second inequality we use  Bernstein's inequality (see, for example, \cite[Ch.5]{Wolff}) and Minkowski's inequality together with the fact that the projection of $\supp\widehat f$ to 
$\sspan \{\nu_\perp\}$ is contained in an interval of length $\lesssim \lambda^{1/2}$.

Note  that the projection  $\supp\widehat f $ to  $\sspan\{  \nu, e_3\}$ is contained in the rectangle $\bI_\lambda\times \bI_h$.  
By rotation  the matter is reduced to obtaining estimate for the 2-d wave operator. That is to say,  
the inequality  \eqref{claim1-1}  follows for $q\ge 6$ if we show 
\[
\big\|
\mathcal W_+ g
\big\|_{L^q (\R^2\times \mathbb I_\tau)}
\lesssim 
\tau^{\frac12-\frac1p} \lambda^{1-\frac1p-\frac3q} h^{\frac2p-\frac12 +\epsilon}  \Vert g\Vert_{L^p}   
\] 
for $1/p+3/q\le 1$ whenever $\supp \widehat g \subset \bI_{h}\times \bI_{\lambda}^\circ$.  This  inequality 
is an immediate consequence of \eqref{locals} and scaling. Indeed, as before,  after scaling (i.e., \eqref{scaling})
we  apply Lemma \ref{locals} with $\supp \cf (g(\tau\,\cdot)) \subset \bI_{\tau h}\times \bI_{\tau \lambda}^\circ$. 
To this end, we use  the  condition $ h\le \tau\lambda^2$, equivalently, 
$ \tau h\le (\tau\lambda)^2$.  

 We now have  the estimate  \eqref{plocal1}  for $6\le q$, $2\le p$,  and $1/p+3/q\le 1$. In order to prove  it in the full range, by interpolation we only have to show  \eqref{plocal1}  for $p=q=4$. 

Let us define $f_\pm$ by setting $\widehat f_\pm(\xi) = \chi_{(0, \infty)}(\pm \xi_2) \widehat f(\xi)$ where $\chi_E$ denotes the character function of a set $E$.  Then, changing variables  $\xi_2\to \pm\sqrt{\rho^2- \xi_1^2}$, we write  
\[ \mathcal{U}f({x},t,s)= \textstyle \sum_{\pm} \int e^{i(x_3\xi_3+t\rho+ s\sqrt{\rho^2+\xi_3^2})}\cf(\mathcal{S}_{\pm}^{\bx} f_\pm)(\rho,\xi_3) d\rho d\xi_3,  \]
where
\[ \mathcal F(\mathcal{S}_{\pm}^{\bx} f_\pm)( \rho,\xi_3)=\pm \int e^{i(x_1\xi_1\pm x_2\sqrt{\rho^2-\xi_1^2})}\widehat{f_\pm}(\xi_1, \pm \sqrt{\rho^2-\xi_1^2},\xi_3)\frac{\rho}{\sqrt{\rho^2-\xi_1^2}}\,d\xi_1. \]

 We  observe the following, which is a consequence of the estimate \eqref{o-locals} with 
$p=q=4$ and   the finite speed of propagation of the wave operator: 
\Be 
\label{fs} \| \mathcal  W_+  g  \Vert_{L^4_{{x_3},t,s}(\mathbb R \times \bI\times \mathbb I_\tau) }\lesssim  \tau^\frac14(\tau h)^\epsilon 
 \|  g  \Vert_{L^4_{{x_3},t }(\mathbb R \times \bI^\circ_2) }  +  h^{-N} \| t^{-N} g  \Vert_{L^4_{{x_3},t }(\mathbb R \times (\bI^\circ_2)^c )}  
 \Ee
 for any $N$ whenever $\supp g\subset \{ \bxi: |\bxi|\sim h\}$. Indeed, to show this  we  
decompose $g=g_1+g_2:=g \chi_{\bI^\circ_2}(y_2)+  g \chi_{({\bI^\circ_2})^c}(y_2).$  By finite speed of propagation  (in fact, by straightforward kernel estimate) we have  $ \| \mathcal  W_+  g_2  \Vert_{L^4(\mathbb R \times \bI\times \mathbb I_\tau) }\lesssim  h^{-N} \|  |y_2|^{-N} g  \Vert_{L^4(\mathbb R\times ({\bI^\circ_2})^c ) }.$ Meanwhile, by scaling and \eqref{o-locals} with 
$p=q=4$, we have  $ \| \mathcal  W_+  g_1  \Vert_{L^4(\mathbb R \times \bI\times \mathbb I_\tau) }\lesssim  \tau^\frac14(\tau h)^\epsilon 
 \|  g  \Vert_{L^4(\mathbb R \times \bI^\circ_2) } $. Combining those two estimates, we obtain \eqref{fs}.

 We now note that 
 $\mathcal{U}f({x},t,s)=\sum_{\pm} \mathcal W_+(\mathcal{S}_{\pm}^{\bx} f_\pm)(x_3, t,s)$ 
 and 
 $\supp \mathcal F(\mathcal{S}_{\pm}^{\bx}  f_\pm)\subset  \{ \bxi: |\bxi|\sim h\}  $
since $\lambda\le h$. 
 Here, we regard $(x_3,t)$ and $s$ as the spatial and temporal variables, respectively. 
Applying \eqref{fs} to $\mathcal W_+(\mathcal{S}_{\pm}^{\bx}  f_\pm)$ with $g=\mathcal{S}_{\pm}^{\bx}  f_\pm$,  we obtain
\[ \textstyle \Vert \mathcal{U} f \Vert_{L^4_{{x},t,s}(\mathbb R^3\times \mathbb J_\tau)}
\lesssim  \sum_{\pm} \Big(
\tau^{\frac1{4}} h^\epsilon   \Vert \mathcal{S}_\pm^{\bx}  f\Vert_{L^4_{{x},t}(\mathbb R^3 \times \bI^\circ_2)} +h^{-N} \| t^{-N}  \mathcal{S}_\pm^{\bx}  f  \Vert_{L^4_{{x},t }(\mathbb R^3 \times (\bI^\circ_2)^c )} \Big). \]
Reversing the change of variables $\xi_2\to \pm\sqrt{\rho^2- \xi_1^2}$, we note that  $ \mathcal{S}_\pm^{\bx}  f(x_3,t)=\mathcal W_+ f_\pm(\cdot, x_3)(\bx, t)$.  
Recalling $\supp \mathcal F f \subset \bA_\lambda\times \bI_h$, we  see that the second term in the right hand side is bounded by a constant times $h^{-N/2} \| f  \Vert_{L^4}$. 
Since $\supp \mathcal F(f(\cdot, x_3))\subset \bA_\lambda$ for all $x_3$, using  Lemma \ref{lem:locals} for $p=q=4$,  we obtain  \eqref{plocal1}  for $p=q=4$. 
This completes the proof of $(a)$. 

The case $(b)$ in which $\supp \widehat f \subset  \mathbb A_\lambda\times \mathbb I_\lambda^\circ$ can be handled without change. 
We only need to note   that the Fourier support of $f_\nu$ is included in $\{ \xi:  |(\xi\cdot\nu, \xi_3)|\sim \lambda\}$, instead of $\{ \xi:  |(\xi\cdot\nu, \xi_3)|\sim h\}$, if $f_\nu\neq 0$.

We now consider the case $(c)$ where $\supp \widehat f \subset  \mathbb A_\lambda\times \mathbb I_h$ with $\tau\lambda^2\le h$. The estimate \eqref{plocal2}
is easier to show. We note that  the Fourier transform of  
\[ 
 e^{is(|\xi|-|\xi_3|)}  \varphi(\bxi/\lambda) \varphi(\xi_3/h)\]
 has uniformly bounded $L^1$ norm. One can easily verify this using $\partial_{\xi}^\alpha    s(|(\lambda\bxi, h\xi_3)|-|h\xi_3|)=O(1)$ on $\bA_1^\circ\times \bI_1$ if $\tau\lambda^2\le h$. 
 Thus, by Lemma \ref{fff} we have $\| \mathcal  U f (\cdot, t,s) \Vert_{L^q} \lesssim   \| e^{it|\bar D|} f \Vert_{L^q} $ uniformly in $s$. So, taking integration in $t,s$, we get
 \[ \| \mathcal  Uf  \Vert_{L^q (\R^3\times \mathbb J_\tau)} \lesssim  \tau^{\frac1q} \| e^{it|\bar D|} f  \Vert_{L^q (\R^3\times \bI)} \lesssim  \tau^{\frac1q} h^{\frac1p-\frac1q}
  \| e^{it|\bar D|} f  \Vert_{L^p_{x_3} (L^q_{\bx,t} (\R^2\times \bI)) } . \] 
  For the second inequality we use Bernstein's and Minkowski's inequalities. 
  Using Proposition \ref{prop:locals} in $\bx,t$, we obtain the estimate \eqref{plocal2} for $2\leq p\leq q\leq \infty$ satisfying ${1}/{p}+{3}/{q}\le 1$.  
  \end{proof}

\begin{rmk} 
\label{pm} 
 Following the argument in the proof of Proposition \ref{flocal} and using Theorem \ref{prop:locals} and Lemma \ref{lem:locals}, one can see without difficulty that $f \to \mathcal Uf(x,-t,s)$ satisfies 
the same estimates in Proposition \ref{flocal} in place of $\mathcal U$. Then, by conjugation and reflection it follows that the estimates also hold for $f \to \mathcal Uf(x,\pm t,-s)$.
\end{rmk}

\subsection{Estimates for the averaging operator $\mathcal A_t^{s}$}\label{sec2.2} Making use of the estimates for $\mathcal U$ in Section \ref{sec:2-p} (Proposition \ref{flocal}), 
we obtain estimates for the averaging operator  $\mathcal A_t^{s}$ while assuming  the input function is localized in the Fourier side. These estimates are to play crucial roles  in proving Theorem \ref{global maximal}, \ref{main thm}, and 
\ref{two para smoothing}.

We relate  $\cA_t^s$ to $\mathcal U$ via asymptotic expansion of the Fourier transform of $d\sigma_t^s$. Note that 
\Be 
\label{ftmeasure}
\widehat{d\sigma_t^s} (\xi)=\int_0^{2\pi} e^{-is\sin\theta \cdot\xi_3}\widehat{d\mu}((t+s\cos\theta) \bxi\,)d\theta, 
\Ee
where $d\mu$ denotes the normalized  arc length measure on the unit circle. 
We recall  the well known asymptotic  expansion of the Bessel function (for example, see \cite{Stein}):  
\begin{equation}\label{bessel}
\widehat{d\mu}(\bxi)=\sum_{\pm,\, 0\le j\le N} C_j^{\pm}|\bxi|^{-\frac{1}{2}-j}e^{\pm i|\bxi|}+E_N(|\bxi|),\qquad |\bxi|\gtrsim 1
\end{equation}
for some constants $C_j^{\pm}$ where  $E_N$ is a smooth function satisfying
\begin{equation}\label{bessel error decay}
\big| (d/dr)^lE_N(r) \big|\le C r^{-l-(N+1)/4},  \quad  0\le l \le N',
\end{equation}
 for $r\gtrsim 1$ and a constant $C>0$ where $N'=[(N+1)/4]$. We use  \eqref{bessel} by taking  $N$ large enough.

 Combining \eqref{ftmeasure} and \eqref{bessel} gives an asymptotic expansion for $\mathcal F(d\sigma_t^s)$, which we utilize by
  decomposing  $f$ in the Fourier side. We consider the cases  $\supp \widehat f\subset \{\xi: |\bxi|>1/\tau\}$  and $\supp \widehat f\subset\{ \xi: |\bxi|\leq1/\tau\}$, separately. 
 
\subsection{When $ \supp \widehat f \subset \mathbb A_\lambda^\circ\times \mathbb R$, $\lambda\le 1/\tau$} If $\supp \widehat f \subset \mathbb A^\circ_{1/\tau} \times \mathbb   \bI^\circ_{1/\tau},$
the sharp estimates are easy to obtain.

\begin{lem} 
Let  $1\leq p\leq q\leq \infty$ and $\tau\in (0,1]$. Suppose $\supp \widehat f \subset B(0, 1/\tau):=\{x: |x|<1/ \tau\}$. Then, for a constant $C>0$  we have 
\begin{align}
 \label{st00} \Vert \mathcal A_t^{s}f\Vert_{L^q_{{x},t,s}(\mathbb R^3\times \mathbb J_\tau)}\le C \tau^{\frac{4}{q}-\frac{3}{p}} \Vert f\Vert_{L^p}.
 \end{align} 
\end{lem}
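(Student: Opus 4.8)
The plan is to obtain \eqref{st00} by combining three elementary facts: the smallness of the parameter region $\mathbb J_\tau$, the uniform $L^q$-boundedness of convolution with the (positive) measure $\sigma_t^s$, and Bernstein's inequality exploiting the frequency localization of $f$. None of these steps is delicate; this lemma merely records the low-frequency estimate that will later serve as an input, and the only real point is the bookkeeping of the powers of $\tau$.

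First I would note that $\mathbb J_\tau\subset \mathbb I\times\mathbb I_\tau=[1,2]\times[\tau,2\tau]$, so the Lebesgue measure of $\mathbb J_\tau$ in the $(t,s)$-variables is at most $\tau$. Consequently, for $q<\infty$,
\[
\Vert \cA_t^s f\Vert_{L^q_{x,t,s}(\R^3\times\mathbb J_\tau)}
\le \Big(\int_{\mathbb J_\tau}\Vert \cA_t^s f\Vert_{L^q_x}^q\,dt\,ds\Big)^{1/q}
\le \tau^{1/q}\sup_{(t,s)\in\mathbb J_\tau}\Vert \cA_t^s f\Vert_{L^q_x},
\]
with the obvious modification when $q=\infty$. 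This reduces matters to a bound for $\cA_t^s f$ with $(t,s)$ fixed, at the cost of the gain $\tau^{1/q}$.

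Next, for each fixed $(t,s)$ I would use that $\sigma_t^s$ is a positive measure of total mass $(2\pi)^2$: applying Minkowski's integral inequality to the representation $\cA_t^s f(x)=\int_{[0,2\pi)^2} f(x-\Phi_t^s(\theta,\phi))\,d\theta\,d\phi$ gives $\Vert \cA_t^s f\Vert_{L^q_x}\le (2\pi)^2\Vert f\Vert_{L^q}$, uniformly in $(t,s)$. Finally, since $\supp\widehat f\subset B(0,1/\tau)\subset\R^3$, Bernstein's inequality yields $\Vert f\Vert_{L^q}\lesssim \tau^{-3(1/p-1/q)}\Vert f\Vert_{L^p}$ for $1\le p\le q\le\infty$. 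Combining the three bounds,
\[
\Vert \cA_t^s f\Vert_{L^q_{x,t,s}(\R^3\times\mathbb J_\tau)}
\lesssim \tau^{1/q}\,\tau^{-3(1/p-1/q)}\,\Vert f\Vert_{L^p}
=\tau^{4/q-3/p}\Vert f\Vert_{L^p},
\]
which is \eqref{st00}. One runs this argument first for Schwartz $f$ and then extends to all $f\in L^p$ with the stated spectral support by density, all constants being independent of $\tau,t,s$. The only ``obstacle'' worth flagging is purely arithmetical: the factor $\tau^{1/q}$ from $|\mathbb J_\tau|$ and the loss $\tau^{-3(1/p-1/q)}$ from Bernstein must combine to exactly the claimed exponent $4/q-3/p$, which they do.
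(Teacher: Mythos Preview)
Your proof is correct and follows essentially the same argument as the paper: both combine the $\tau^{1/q}$ gain from $|\mathbb J_\tau|\lesssim\tau$, the uniform $L^r$-boundedness of convolution with $\sigma_t^s$, and Bernstein's inequality. The only cosmetic difference is the order of the last two steps: the paper applies Bernstein to $\cA_t^s f$ (which has the same Fourier support as $f$) and then uses $\|\cA_t^s f\|_{L^p_x}\le C\|f\|_{L^p}$, whereas you use $\|\cA_t^s f\|_{L^q_x}\le C\|f\|_{L^q}$ first and then Bernstein on $f$.
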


\begin{proof} 
Since $\mathcal A_t^{s}$ is a convolution operator and $\supp \widehat f \subset B(0, \tau^{-1})$, Bernstein's inequality gives 
$ \Vert \mathcal A_t^{s} f\Vert_{L^q_{{x}}}\lesssim \tau^{\frac{3}{q}-\frac{3}{p}}\Vert \mathcal A_t^{s} f\Vert_{L^p_x}$ for any $s,t\in \mathbb R$. 
Thus, we have 
\Be\label{00}
 \Vert \mathcal A_t^{s} f\Vert_{L^q_{{x}}}\lesssim \tau^{\frac{3}{q}-\frac{3}{p}}\Vert f\Vert_{L^p}, \quad \forall s,t \in \mathbb R.
 \Ee The inequality \eqref{st00} follows by integration  in $t,s$ over $\mathbb J_\tau$. 
\end{proof}

\begin{prop}
Let  $1\leq p\leq q\leq \infty$,  $\tau\lesssim  1$, and $h\gtrsim 1/\tau$. Suppose $\supp \widehat f \subset \bA_1^\circ\times \bI_h$. 
Then, we have  
\begin{align}
\label{000st}
&\Vert \mathcal A_t^{s} f\Vert_{L^q_{{x},t,s}(\R^3\times \mathbb J_\tau)} \lesssim \tau^{1/q} (\tau h)^{-\frac1{2}} h^{\frac{1}{p}-\frac{1}{q}}\Vert  f\Vert_{L^p}.
\end{align}
\end{prop}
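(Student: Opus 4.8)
The plan is to reduce the estimate, in two steps, to a diagonal $L^p\to L^p$ bound for frozen $(t,s)$. First, since $\mathbb J_\tau\subset\mathbb I\times\mathbb I_\tau$ has measure $\lesssim\tau$, it suffices to prove, with constant independent of $(t,s)\in\mathbb J_\tau$, the fixed-time bound $\|\mathcal A_t^s f\|_{L^q_x(\R^3)}\lesssim(\tau h)^{-1/2}h^{1/p-1/q}\|f\|_{L^p}$; raising to the $q$-th power and integrating in $(t,s)$ then gives \eqref{000st}. Second, since $\widehat f$ — hence $\widehat{\mathcal A_t^s f}$ — is supported in $\bA_1^\circ\times\bI_h$, a box of dimensions $\lesssim 1\times 1\times h$, Bernstein's inequality yields $\|\mathcal A_t^s f\|_{L^q_x}\lesssim h^{1/p-1/q}\|\mathcal A_t^s f\|_{L^p_x}$ for $p\le q$. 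Thus the whole proposition reduces to the claim
\[
\|\mathcal A_t^s f\|_{L^p_x(\R^3)}\lesssim(\tau h)^{-1/2}\|f\|_{L^p(\R^3)},\qquad 1\le p\le\infty,
\]
uniformly in $(t,s)\in\mathbb J_\tau$.

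To prove this I would analyse the multiplier $\widehat{d\sigma_t^s}$ on the box. Since $\tau h\gtrsim1$, I may assume $\tau h$ is large: if $\tau h\lesssim1$ the factor $(\tau h)^{-1/2}$ is $\sim1$ and the bound is immediate from $\|\mathcal A_t^s f\|_{L^p_x}\lesssim\|f\|_{L^p}$, as $\sigma_t^s$ has total mass $(2\pi)^2$. Fix $\beta\in C_c^\infty(\R^3)$ equal to $1$ on $\bA_1^\circ\times\bI_h$ with support in a slightly larger box, so that $\widehat{\mathcal A_t^s f}=(\widehat{d\sigma_t^s}\,\beta)\widehat f$. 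The point is that on $\supp\beta$ one has $|\bxi|\lesssim1$, and for $(t,s)\in\mathbb J_\tau$ one has $t\sim1$, $s\sim\tau$ with $t+s\cos\theta$ confined to a compact subinterval of $(0,\infty)$; hence in \eqref{ftmeasure} the amplitude $\theta\mapsto\widehat{d\mu}((t+s\cos\theta)\bxi)$ is smooth with all $\theta,\bxi$ derivatives bounded uniformly, while the phase $\theta\mapsto-s\xi_3\sin\theta$ has only the nondegenerate critical points $\theta=\pm\pi/2$, with second derivative of size $|s\xi_3|\sim\tau h$. Stationary phase then produces, on $\supp\beta$, a decomposition
\[
\widehat{d\sigma_t^s}(\xi)\beta(\xi)=\sum_{\pm}e^{\pm is\xi_3}\,b_\pm(\xi)+\mathfrak e(\xi),
\]
where $b_\pm,\mathfrak e$ are supported in $\supp\beta$ and, uniformly in $(t,s)$, satisfy $|\partial_{\bxi}^\alpha\partial_{\xi_3}^k b_\pm|\lesssim(\tau h)^{-1/2}h^{-k}$ and $|\partial_{\bxi}^\alpha\partial_{\xi_3}^k\mathfrak e|\lesssim(\tau h)^{-3/2}\tau^k$ for all $\alpha,k$; the remainder loses only $\tau$ (not $h^{-1}$) per $\xi_3$-derivative because $\partial_{\xi_3}$ reaches the phase solely through the factor $s\xi_3$.

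Now $e^{\pm is\xi_3}$ is precisely the multiplier of translation by $\pm s$ in $x_3$, so
\[
\mathcal A_t^s f(x)=\sum_\pm\big(b_\pm(D)f\big)(\bx,\,x_3\pm s)+\big(\mathfrak e(D)f\big)(x),
\]
and translations are isometries on $L^p_x$. Since $b_\pm$ is a symbol of size $(\tau h)^{-1/2}$ adapted to a box of dimensions $\lesssim1\times1\times h$, a routine integration by parts gives $\|b_\pm^\vee\|_{L^1(\R^3)}\lesssim(\tau h)^{-1/2}$. For the remainder, $\mathfrak e$ has size $\lesssim(\tau h)^{-3/2}$ on a frequency set of volume $\sim h$ and is smooth at scale $1$ in $\bxi$, scale $\tau^{-1}$ in $\xi_3$, so integration by parts puts $\mathfrak e^\vee$, up to rapidly decaying tails, on the set $\{|\bx|\lesssim1,\ |x_3|\lesssim\tau\}$ with $\|\mathfrak e^\vee\|_\infty\lesssim h(\tau h)^{-3/2}$, whence $\|\mathfrak e^\vee\|_{L^1(\R^3)}\lesssim\tau\cdot h(\tau h)^{-3/2}=(\tau h)^{-1/2}$. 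Young's inequality then yields the diagonal bound for every $1\le p\le\infty$, completing the argument.

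I expect the main obstacle to be the diagonal bound for $p\ne2$, which forces the extraction of the factors $e^{\pm is\xi_3}$. The pointwise estimate $|\widehat{d\sigma_t^s}|\lesssim(\tau h)^{-1/2}$ on the box is immediate from van der Corput and gives only the $L^2\to L^2$ statement; worse, the $L^1$ norm of the raw kernel of $\widehat{d\sigma_t^s}\beta$ is merely $O(1)$ — the decay $(\tau h)^{-1/2}$ becomes visible only after one separates the two oscillatory phases and recognises them as translations. A secondary technical nuisance is that the stationary-phase remainder has an $x_3$-kernel spread over the coarse scale $\tau$ rather than $h^{-1}$; this is harmless, being offset by the extra $(\tau h)^{-1}$ gained from carrying the expansion one order further.
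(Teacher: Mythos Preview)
Your proof is correct and follows essentially the same route as the paper: reduce to a fixed-$(t,s)$ diagonal bound, perform stationary phase in $\theta$ on the representation \eqref{ftmeasure} (exploiting $|\bxi|\lesssim 1$ to make the amplitude harmless), extract $e^{\pm is\xi_3}$ as a translation, and finish with Bernstein. The only cosmetic difference is that the paper first unpacks the $\phi$-integral via Lemma~\ref{fff} and then invokes the Mikhlin multiplier theorem for the symbols $|s\xi_\phi|^{-1/2}$ and $E_N(s|\xi_\phi|)$, whereas you keep the $\phi$-integral inside $\widehat{d\mu}((t+s\cos\theta)\bxi)$ and control the resulting symbols by direct $L^1$ kernel bounds obtained from integration by parts.
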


\begin{proof} To prove \eqref{000st} it is sufficient to show, for  a positive constant $C$, 
\Be \label{000}
\Vert \mathcal A_t^{s} f\Vert_{L^q_{{x}}} \le C (\tau h)^{-\frac1{2}} h^{\frac{1}{p}-\frac{1}{q}}\Vert  f\Vert_{L^p}, 
\quad \forall (t,s) \in  \mathbb J_\tau. 
\Ee
In fact, integration over $ \mathbb J_\tau$ yields \eqref{000st}. 

For simplicity, we denote $\mathbf v_{\!\phi} =(\cos\phi, \sin\phi)$, and we note that 
\[ \mathcal A_t^{s}  f(x)= (2\pi)^{-3}\int \int  e^{i((\bx-t\vphi)\cdot\bxi+x_3\xi_3-s(\vphi\cdot\bxi,\xi_3)\cdot \vthe)} \widehat f(\xi) d\phi d\theta d\xi.  \]
Since $\supp \widehat f \subset \bA_1^\circ\times \bI_h$,  we may disregard the factor $e^{-it\vphi\cdot\bxi}$ using Lemma \ref{fff}.
 Indeed, let $\rho\in C_c( \bA_2^\circ)$ such that $\rho=1$ on $ \bA_1$. 
 Setting  $\rho_{t}^\phi (\bxi)=\rho(\bxi)e^{i t\vphi\cdot\bxi}$, we see  $\|\mathcal F(\rho_{t}^\phi)\|_1\le C$ for a constant $C>0$ and $|t|\lesssim 1$. 
Thus, by Minkowski's inequality and Lemma \ref{fff} we have  
\[   \Vert \mathcal A_t^{s}  f\Vert_{L^q_{{x}}}\lesssim \sup_{\phi}   \Big\| \int   e^{i {x}\cdot{\xi}} 
\int_0^{2\pi} e^{-is(\vphi\cdot\xi,\xi_3)\cdot \vthe } d\theta \widehat f(\xi) d \xi  \Big\|_{L^q_{{x}}}\]
for $|t|\lesssim 1$.   We denote ${\xi}_\phi =(\vphi\cdot\xi,\xi_3)$, and notice that $|s{\xi}_\phi|\gtrsim 1$ since  $h\tau\ge 1$. 
So, usng \eqref{bessel}, we have    
\[ \tx \int e^{-is{\xi}_\phi\cdot \vthe}d\theta=\sum_{\pm,\, 0\le j\le N}  C_j^{\pm}|s{\xi}_\phi|^{-\frac{1}{2}-j}e^{\pm is|{\xi}_\phi|}+E_N(s|{\xi}_\phi|).  \]

To show \eqref{000}, we  obtain  only  the estimates for the operators  $m_s^\pm(D)$, $E_N(s|D_\phi|)$ whose multipliers are given by  
\[ m_s^\pm(\xi):=|s{\xi}_\phi|^{-1/{2}}e^{\pm is|{\xi}_\phi|}, \quad E_N(s|{\xi}_\phi|).\]
Contributions from the multiplier operators associated  with the other terms can be handled similarly but those are easier. 
Since $|\bar\xi|<2$ and $|\xi_3|\sim h\geq 1/\tau$, we use the Mikhlin multiplier theorem and Lemma \ref{fff} to see 
\[ 
\big\|  m_s^\pm(D) f\big\|_{L^q_{{x}}}\lesssim (\tau h)^{-\frac12} 
\Big\| \int e^{i({x}\cdot{\xi}\pm s|\xi_3|)} \widehat f ({\xi})d{\xi} \Big\|_{L^q_{{x}}}\le  (\tau h)^{-\frac12} \| f\|_{L^q_{{x}}}. 
\]
Since $\supp \widehat f\subset \bA_1^\circ \times \bI_h$, by Bernstein's lemma  we have $\Vert f \Vert_{L^q}\lesssim h^{\frac{1}{p}-\frac{1}{q}}\Vert f \Vert_{L^p}.$
This gives the desired estimates for $m_s^\pm(D)$. For the multiplier operator $E_N(s|D_\phi|)$, note from \eqref{bessel error decay} that 
$\partial_{\xi_\phi}^\alpha(|s\xi_\phi|^{N'} E_N(|s\xi_\phi|)\le C(|s\xi_\phi|^{-|\alpha|})$ for $|\alpha|\le N'$ and a constant $C>0$. Using the Mikhlin multiplier theorem again, 
we have 
\[ \big\| E_N(s|D_\phi|) f  \big\|_{L^q_{{x}}} \lesssim \Big\| \int e^{i{x}\cdot{\xi}}|s\xi_3|^{-N'}\widehat{f}({\xi})d{\xi}d\theta \Big\|_{L^q_{{x}}}. \] 
Since $\supp \widehat f\subset \bA_1^\circ \times \bI_h$,  we see, as before, that the right hand side is bounded by 
$
C (h\tau)^{-N'}h^{1/{p}-1/{q}}\Vert f\Vert_{L^p}.
$
Thus, the desired estimate for  $E_N(s|D_\phi|)$ follows.  
 \end{proof}

When $\lambda\gtrsim 1$, to handle the case  $\supp \widehat f \subset \bA_\lambda \times \bI_h$ we need more than the estimates with fixed $t,s$. We need the 
smoothing estimates obtained in Section \ref{sec:2-p}.

\begin{prop} 
\label{case20}
Let $2\le p\le q\le \infty$,  $1/p+1/q\le 1$, and $1\lesssim \lambda  \lesssim 1/\tau\lesssim h$.  Suppose $\supp \widehat f \subset \bA_\lambda \times \bI_h$. Then,  for any $\epsilon>0$ we have the following:
\begin{align}\label{0jnk local}
\Vert \mathcal A_t^{s}  f\Vert_{L^q(\R^3\times \mathbb J_\tau)}&\lesssim
\tau^\frac1q (\tau h)^{-\frac12}  h^{\frac{1}{p}-\frac{1}{q}} \lambda^{\frac{1}{p}-\frac{3}{q}+\epsilon}\Vert f\Vert_{L^p},  \quad &&1/{p}+3/{q}\le 1,
\\
\label{0jnk local1}
\Vert \mathcal A_t^{s}  f\Vert_{L^q(\R^3\times \mathbb J_\tau)}&\lesssim
\tau^\frac1q (\tau h)^{-\frac12}  h^{\frac{1}{p}-\frac{1}{q}} \lambda^{-\frac12+  \frac{3}{2p}-\frac{3}{2q}+\epsilon}\Vert f\Vert_{L^p},   \quad && 1/{p}+3/{q}> 1.
\end{align}
\end{prop}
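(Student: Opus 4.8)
The plan is to relate $\mathcal A_t^s$ to the two-parameter propagator $\mathcal U$ through the asymptotic expansion of $\widehat{d\sigma_t^s}$, and then to run the (elementary) argument underlying Proposition \ref{flocal}(c). Substituting \eqref{bessel} into \eqref{ftmeasure} and applying stationary phase to the resulting $\theta$-integrals, one obtains, on the frequency box $\bA_\lambda\times\bI_h$,
\[
\widehat{d\sigma_t^s}(\xi)=\lambda^{-1/2}(\tau h)^{-1/2}\sum_{\pm_1,\pm_2}c_{t,s}^{\pm_1\pm_2}(\xi)\,e^{i(\pm_1 t|\bxi|\pm_2 s|\xi|)}+E(\xi,t,s),
\]
where, after rescaling $\bxi\mapsto\lambda\bxi$ and $\xi_3\mapsto h\xi_3$, each $c_{t,s}^{\pm_1\pm_2}$ is a symbol of order $0$ with bounds uniform in $(t,s)\in\mathbb J_\tau$, and $E$ gains an arbitrarily large negative power of $\lambda$ by taking $N$ large in \eqref{bessel}. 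The stationary phase in $\theta$ is legitimate because $s|\xi|\sim\tau h\gtrsim1$; and, crucially, although the Gaussian curvature of the torus vanishes on the circles $\theta=\pm\pi/2$, the $\theta$-phase $\theta\mapsto\pm s\cos\theta\,|\bxi|-s\sin\theta\,\xi_3$ has Hessian of size $s|\xi|\sim\tau h$ at its critical points (which sit near $\theta=\pm\pi/2$ when $h\gg\lambda$), so it is nondegenerate here. Consequently $\mathcal A_t^s f$ is, modulo the error operator $E(D,t,s)$, a finite sum of $\lambda^{-1/2}(\tau h)^{-1/2}\,m_{t,s}(D)\,\mathcal U f(\cdot,\pm_1 t,\pm_2 s)$ with $m_{t,s}$ normalized symbols (cf. Remark \ref{pm}).

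Since $\lambda\lesssim 1/\tau\lesssim h$ we have $h\gtrsim\tau\lambda^2$, so the argument proving Proposition \ref{flocal}(c) applies: it uses only the bound $\tau\lambda^2\lesssim h$ — through the fact that $e^{is(|\xi|-|\xi_3|)}\varphi(\bxi/\lambda)\varphi(\xi_3/h)$ is a normalized symbol and $e^{\pm is|\xi_3|}$ is a translation in $x_3$ — and in particular it does not need $\lambda\ge\tau^{-1}$. Following that argument with $\mathcal U$ replaced by the operators above, peeling off the normalized symbols and the $x_3$-translations at cost $O(1)$ via Lemma \ref{fff} and translation invariance, integrating in $s$ over $\mathbb I_\tau$ (a factor $\tau^{1/q}$), applying Bernstein's inequality in $x_3$ (a factor $h^{1/p-1/q}$) and Minkowski's inequality, one is reduced — for each fixed $x_3$, where $\supp\widehat{f(\cdot,x_3)}\subset\bA_\lambda$ — to the $2$-d local smoothing estimate for $e^{it|\bar D|}$ over $\R^2\times\mathbb I$.

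When $1/p+3/q\le1$, Theorem \ref{prop:locals} supplies the bound $\lambda^{1/2+1/p-3/q+\epsilon}$, and combining this with the overall amplitude $\lambda^{-1/2}$ gives precisely \eqref{0jnk local}. When $1/p+3/q>1$ (and $1/p+1/q\le1$), Theorem \ref{prop:locals} no longer applies at the relevant exponents, and I would instead interpolate the two endpoint cases $(p,q)=(4,4)$ and $(p,q)=(2,6)$ of Theorem \ref{prop:locals} with the trivial bound $\|e^{it|\bar D|}g\|_{L^2(\R^2\times\mathbb I)}\lesssim\|g\|_{L^2}$; these hold at the vertices $(1/4,1/4)$, $(1/2,1/6)$, $(1/2,1/2)$ of the triangle $\{1/p+1/q\le1,\ 1/p+3/q\ge1,\ 2\le p\le q\}$ in the $(1/p,1/q)$-plane, with $\lambda$-exponents $0$, $1/2$, $0$ respectively, so interpolation yields $\lambda^{\frac32(1/p-1/q)+\epsilon}$, which together with the amplitude $\lambda^{-1/2}$ gives \eqref{0jnk local1}. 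The error operators $E(D,t,s)$ are handled crudely: applying them after Bernstein's inequality on the box $\bA_\lambda\times\bI_h$ costs only $(\lambda^2h)^{1/p-1/q}$, and the extra power $\lambda^{-(N+1)/4}$ beats this once $N$ is large, since $\lambda\gtrsim1$.

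The step I expect to be the main obstacle is the first one: carrying out the stationary-phase expansion of $\widehat{d\sigma_t^s}$ cleanly and verifying that every amplitude produced — by the Bessel expansion \eqref{bessel}, by the stationary phase in $\theta$, and by the reduction replacing $e^{is|\xi|}$ by $e^{is|\xi_3|}$ — is an order-zero symbol on $\bA_\lambda\times\bI_h$, uniformly for $(t,s)\in\mathbb J_\tau$, so that all of them may be removed at cost $O(1)$. The point that makes this go through, notwithstanding the vanishing curvature, is that in the regime $\lambda\lesssim1/\tau\lesssim h$ the relevant $\theta$-Hessian is $\sim s|\xi|\sim\tau h\gtrsim1$ rather than $\sim s|\bxi|\sim\tau\lambda$, so no genuine degeneracy occurs.
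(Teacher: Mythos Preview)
Your approach is correct and essentially the same as the paper's: both expand $\widehat{d\sigma_t^s}$ via the Bessel asymptotics \eqref{bessel} followed by stationary phase in $\theta$ (the paper's \eqref{exp}, \eqref{bessel analog}), replace $e^{\pm is|\xi|}$ by $e^{\pm is|\xi_3|}$ using Lemma~\ref{fff} (legitimate because $\tau\lambda^2\lesssim h$, exactly as in the proof of Proposition~\ref{flocal}(c)), peel off $e^{\pm is|\xi_3|}$ as an $x_3$-translation, apply Bernstein in $x_3$, and finish with the $2$-d local smoothing estimate for $e^{it|\bar D|}$. For \eqref{0jnk local1} the paper proves the $L^2$ endpoint \eqref{l2l2} directly by Plancherel and interpolates with \eqref{0jnk local} on the line $1/p+3/q=1$; your interpolation of the $2$-d local smoothing bounds at $(p,q)=(2,2),(4,4),(2,6)$ is an equivalent route and gives the same exponent $\lambda^{-1/2+\frac{3}{2p}-\frac{3}{2q}+\epsilon}$.

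There is, however, one genuine gap in your treatment of the remainder $E$. Your crude bound --- Bernstein on the box times $\lambda^{-(N+1)/4}$ from $E_N$ --- produces at best
\[
\|E(D,t,s)f\|_{L^q(\R^3\times\mathbb J_\tau)}\lesssim \tau^{1/q}(\lambda^2 h)^{\frac1p-\frac1q}\lambda^{-N'}\|f\|_{L^p},
\]
which does \emph{not} dominate the target $\tau^{1/q}(\tau h)^{-1/2}h^{\frac1p-\frac1q}\lambda^{\cdots}\|f\|_{L^p}$: fix $\lambda,\tau$ and let $h\to\infty$; your bound decays like $h^{1/p-1/q}$ while the target decays like $h^{-1/2+1/p-1/q}$, so you are short by a factor $(\tau h)^{-1/2}$ no matter how large $N$ is. The missing ingredient is a second application of stationary phase in $\theta$ to the Bessel remainder itself,
\[
\int_0^{2\pi} e^{-is\xi_3\sin\theta}\,E_N\big((t+s\cos\theta)|\bxi|\big)\,d\theta,
\]
which is nondegenerate since $s|\xi_3|\sim\tau h\gtrsim 1$; this is exactly what the paper does in \eqref{bessel2}, and it recovers the needed $(\tau h)^{-1/2}$ (see \eqref{error}). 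Your anticipated ``main obstacle'' --- carrying out the expansion cleanly --- is precisely where this lives; once you include it, the argument goes through.
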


To show Proposition \ref{case20}, as mentioned above,  we  use the asymptotic expansion of the Fourier transform of $d\sigma_t^s$.  Let us set 
\[  m^\pm_l (\xi,t,s)= \int e^{-i (s\xi_3\sin\theta \mp s|\bxi|\cos\theta)} a_l(\theta,t,s) d\theta, \]
where $a_l(\theta,t,s)=(t+s\cos\theta)^{-(2l+1)/2}$. Putting \eqref{ftmeasure} and \eqref{bessel} together, we have 
\Be 
\label{exp}
\tx\widehat{d\sigma_t^s}(\xi)=  \sum_{\pm, 0\le l\le N} \, M_l^\pm(\xi,t,s) +\mathcal E (\xi,t,s)  
\Ee 
for $|\bxi|\gtrsim 1$ where  
\begin{align}
\label{ml}
  M_l^\pm(\xi,t,s)&= C_l |\bxi|^{-l-\frac{1}{2}}   e^{\pm it |\bxi|} m^\pm_l (\xi,t,s), \qquad l=0, \dots, N, 
 \\
 \label{ce}
  \mathcal E (\xi,t,s)&= \int e^{-is\xi_3\sin\theta} E_N((t+s\cos\theta)|\bxi|) d\theta. 
 \end{align}

\begin{proof}
We first show \eqref{0jnk local}. From \eqref{exp}  we need to obtain estimates for the operators  associated to the  multipliers  $M_l^\pm$ and $\mathcal E$. 
The major contributions are from $M_l^\pm(D,t,s)$.  We claim that 
\begin{equation}
\label{tml}
\big\| M_l^\pm(D,t,s) f
\big\|_{L^q_{{x},t}(\mathbb R^3\times \mathbb J_\tau)}\lesssim 
\tau^\frac1q (\tau h)^{-\frac 12}  h^{\frac{1}{p}-\frac{1}{q}} \lambda^{\frac{1}{p}-\frac{3}{q}-l +\epsilon} 
\Vert f\Vert_{L^p}
\end{equation}
holds for  $p\le q$ and ${1}/{p}+{3}/{q}\le 1$. To show this,  we consider the operator $e^{\pm it |\bar D|}  m^\pm_l (D,t,s)$. 
Note that $m^\pm_l (\xi,t,s)= \int e^{-is(\mp |\bxi|,\xi_3)\cdot \vthe}  a_l(\theta,t,s) d\theta $. 
By the stationary phase method, we have 
\begin{equation}\label{bessel analog}
m^\pm_l (\xi,t,s)
 = \sum_{\pm, 0\le j\le N} B_j^{\pm}|s{\xi}|^{-\frac{1}{2}-j}e^{\pm i|s{\xi}|}+\tilde E_N^\pm (s|{\xi}|), \quad (t,s)\in \mathbb J_\tau
\end{equation}
for $|s\xi|\gtrsim 1$. 
Here,  $B_l^{\pm}$  and $\tilde E_N^\pm $  depend on $t,s$.  However,  $(\partial/\partial_\theta)^k a_l $ is uniformly bounded since  $s<c_0t$, i.e.,  $(t,s)\in \mathbb J_0$, so  $B_l^{\pm}$ are uniformly bounded and  $\tilde E_N^\pm$ satisfies \eqref{bessel error decay} in place of $E_N$  as long as   $(t,s)\in \mathbb J_\tau$.

 For the error term $\tilde E_N^\pm(s|{\xi}|) 
$,  we can replace it, similarly as before, by $|s{\xi}|^{-N'}$ using the Mikhlin multiplier theorem. Thus, using \eqref{o-locals} and Bernstein's inequality in $x_3$ (see, for example, \cite[Ch.5]{Wolff}), we obtain
\begin{align}\label{j<n<k error 2} 
\big\| \chi_{\mathbb J_\tau} (t,s)   e^{\pm it|\bar D|}\tilde E_N^\pm(s|D|) f \big\|_{L^q_{{x},t}(\mathbb R^3\times \mathbb I)}  \lesssim     (\tau h)^{-N'}  h^{\frac{1}{p}-\frac{1}{q}} \lambda^{\frac12+\frac{1}{p}-\frac{3}{q}+\epsilon} 
\Vert f\Vert_{L^p} 
\end{align}
for $p,q$ satisfying  ${1}/{p}+{3}/{q}\le 1$ since $\supp \widehat f \subset \bA_\lambda \times \bI_h$, $s\in \bI_\tau$,  and $\tau h \gtrsim 1$. 
Recalling \eqref{bessel analog}, we consider the multiplier operator given by 
\[ \tx  a_{l,t,s}^\pm (\xi)=\sum_{\pm, 0\le j\le N} B_j^{\pm}|s{\xi}|^{-\frac{1}{2}-j}.\]
 Since $\lambda \lesssim  1/\tau\lesssim h$,  using the same argument as before   (e.g., Lemma \ref{fff}), we may replace 
 $e^{\pm i|s{\xi}|}$ with $e^{\pm i|s{\xi_3}|}$. 
 By the Mikhlin multiplier theorem,  we have 
\[
\big\| \chi_{\mathbb J_\tau} (t,s)  e^{\pm i(t|\bar D|+ s|D|)}  a_{l,t,s}^\pm (D) f\big\|_{L^q_{{x},t}(\mathbb R^3\times \mathbb I)}  \lesssim 
(\tau h)^{-\frac12}  \Big\| \chi_{\mathbb J_\tau} (t,s)  e^{\pm it|\bar D|} f \big \|_{L^q_{{x},t}(\mathbb R^3\times \mathbb I)}.
\]
Applying \eqref{locals} and Bernstein's inequality as before, we have the left hand side bounded by 
$
(\tau h)^{-\frac 12}  h^{\frac{1}{p}-\frac{1}{q}} \lambda^{\frac12+\frac{1}{p}-\frac{3}{q}+\epsilon} 
\Vert f\Vert_{L^p}
$
for ${1}/{p}+{3}/{q}\le 1$. 
Combining  this and  \eqref{j<n<k error 2}, we obtain 
\[
\big\|  \chi_{\mathbb J_\tau} (t,s)M_l^\pm(D,t,s) f
\big\|_{L^q_{{x},t}(\mathbb R^3\times \mathbb I)}\lesssim 
(\tau h)^{-\frac 12}  h^{\frac{1}{p}-\frac{1}{q}} \lambda^{\frac{1}{p}-\frac{3}{q}-l +\epsilon} 
\Vert f\Vert_{L^p}.
\]
Thus, taking integration in $s$ gives 
\eqref{tml}.

 We now consider the contribution of the error term $\mathcal E$ in  \eqref{exp}, whose  contribution  is less significant. 
 It can be handled by using the estimates for fixed $(t,s)\in \mathbb J_\tau$. Recalling \eqref{exp}, we  set 
\[  E_N^{\,0}(\theta):=E_N^{\,0}(\theta, s,t, \bxi)= |\bxi|^{N'}
 E_N((t+s\cos\theta)|\bxi|).\]
 We have  $|\partial_{\theta}^n  E_N^{\,0}(\theta)| \lesssim 1$ uniformly in $n$, $\theta$ for $(t,s)\in \mathbb J_\tau$ since $(t+s\cos\theta)\gtrsim 1-c_0$ for $(t,s)\in \mathbb J_\tau$. By the stationary phase method  \cite[Theorem 7.7.5]{hormander-book} one can obtain a similar expansion as before: 
\begin{equation}\label{bessel2}
\int e^{-is\xi_3\sin\theta} E_N^{\,0}(\theta) d\theta=\sum_{\pm, 0\le w\le M} D_w^{\pm}|s\xi_3|^{-\frac{1}{2}-w}e^{\pm is\xi_3}+E_M'(|s\xi_3|)
\end{equation}
for $(t,s)\in \mathbb J_\tau$. 
Here, $E_M'$ satisfies the same bounds as $E_N$ (i.e., \eqref{bessel error decay}) and $M\le N/4$.  $D_w^{\pm}$ and $E_M'$ depend on $t,\xi$, but they are harmless as can be seen  by the Mikhlin multiplier theorem. The contribution from $E_M'$ can be directly controlled by the Mikhlin multiplier theorem. Since $\supp f\subset \bA_\lambda \times \bI_h$,  Bernstein's inequality gives 
\[
\Big\| \int e^{-isD_3\sin\theta}E_N((t+s\cos\theta)|D|) d\theta f \Big\|_{L^q_{{x}}}\lesssim   (\tau h)^{-\frac12}  \lambda^{-N'} (\lambda^2 h)^{\frac{1}{p}-\frac{1}{q}}\Vert f\Vert_{L^p}
\]
for $(t,s)\in \mathbb J_\tau$.  
Note that the implicit constant here does not depend on $t,s$.  
Thus, integration in $s,t$ gives 
\Be
\label{error} 
 \| \mathcal E(D,t,s) f\|_{L^q(\R^3\times \mathbb J_\tau)} \le C \tau^\frac1q (\tau h)^{-\frac12}  h^{\frac{1}{p}-\frac{1}{q}} \lambda^{2-N'}\|f\|_p
 \Ee
 for $1\le p\le q\le \infty$. So,  the contribution of  
$\mathcal E(D,t,s) f$ is acceptable. Therefore, 
from \eqref{exp} and \eqref{tml}, we obtain \eqref{0jnk local}.

Putting \eqref{exp}, \eqref{ml}, \eqref{ce}, and \eqref{bessel analog} together, by Plancherel's theorem one can easily see 
$ \| \mathcal A_t^s f\|_{L_x^2}\lesssim (\tau h)^{-\frac12} \lambda^{-\frac12} \|f\|_2.$ Thus, integration in $s,t$ gives 
\Be 
\label{l2l2} 
\| \mathcal A_t^s f\|_{L^2(\R^3\times \mathbb J_\tau)}\lesssim h^{-\frac12} \lambda^{-\frac12} \|f\|_2, 
\Ee
which is 
\eqref{0jnk local1} for $p=q=2$. Interpolation between this  and the estimate  \eqref{0jnk local} for $p,q$ satisfying $1/p+3/q= 1$ gives   \eqref{0jnk local1} for $1/p+3/q> 1$. 
\end{proof}

\subsection{When $ \supp \widehat f \subset \mathbb A_\lambda^\circ\times \mathbb R$ and $\lambda\gtrsim 1/\tau$}
We have the following estimate.

\begin{prop}\label{A1 estimate}  Let $2\le p\le q\le \infty$ satisfy  $1/p+1/q\le 1$. 
$(a)$ If $1/\tau\lesssim \lambda\lesssim h\lesssim \tau\lambda^2$,  then for any $\epsilon>0$  we have  the estimates
\begin{align}\label{A1 ineq}
 \Vert \mathcal A_t^{s} f\Vert_{L^q(\R^3\times \mathbb J_\tau)}  &\lesssim   
 \tau^{\frac{3}{2q}-\frac{1}{2}-\frac{1}{2p}} h^{-\frac{1}{2}+\frac{3}{2p}-\frac{3}{2q}+\epsilon} \lambda^{ \frac{1}{2p}-\frac{1}{2q}-\frac{1}{2}}\Vert f\Vert_{L^p}  
\end{align}
 for $1/p+ 3/{q}> 1$, and 
\begin{align} 
\label{A1 ineq1}
\Vert \mathcal A_t^{s} f\Vert_{L^q(\R^3\times \mathbb J_\tau)}  &\lesssim    \tau^{-\frac1{p}}h^{-1+\frac{2}{p}+\epsilon} \lambda^{1-\frac{1}{p}-\frac{5}{q}}\Vert  f\Vert_{L^p} 
\end{align}
 for $1/{p}+3/{q}\le 1$ whenever $\supp \widehat f \subset\bA_\lambda \times \bI_h$.  $(b)$
If $\supp \widehat f \subset\bA_\lambda \times \bI_\lambda^\circ$,  the estimates \eqref{A1 ineq} and  \eqref{A1 ineq1} hold with $h=\lambda$.  $(c)$ Suppose $1/\tau\lesssim \lambda$ and $h\gtrsim \lambda^2 \tau$, then the estimates \eqref{0jnk local} and \eqref{0jnk local1} hold whenever $\supp \widehat f \subset\bA_\lambda \times \bI_h$.
\end{prop}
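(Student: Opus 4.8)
The plan is to reduce Proposition \ref{A1 estimate} to the already-established estimates for the two-parameter propagator $\mathcal U$ via the asymptotic expansion \eqref{exp} of $\widehat{d\sigma_t^s}$. This parallels the proof of Proposition \ref{case20}, the difference being that now $\lambda\gtrsim 1/\tau$, so the spatial frequency is large enough that the full oscillatory factor $e^{\pm it|\bxi|}$ must be retained and Lemma \ref{lem:locals} / Proposition \ref{flocal} (rather than the fixed-$(t,s)$ estimates) must be invoked. Concretely, for each term $M_l^\pm(\xi,t,s)=C_l|\bxi|^{-l-1/2}e^{\pm it|\bxi|}m_l^\pm(\xi,t,s)$ in \eqref{ml}, apply the stationary phase expansion \eqref{bessel analog} of $m_l^\pm$ in the $\theta$-variable; this produces a main oscillatory factor $e^{\pm i s|\xi|}$ multiplied by a symbol of order $-1/2$ in $|s\xi|$, plus a rapidly decaying error $\tilde E_N^\pm$. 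The main factor $e^{\pm i(t|\bxi|+s|\xi|)}$ is exactly (up to $\pm$, handled by Remark \ref{pm}) the kernel of $\mathcal U$, and the symbols are harmless by the Mikhlin multiplier theorem together with Lemma \ref{fff}.

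First I would treat case $(a)$, the regime $1/\tau\lesssim\lambda\lesssim h\lesssim\tau\lambda^2$. Here one substitutes the expansion \eqref{exp}; the contribution of $\sum_{l}M_l^\pm(D,t,s)f$ is controlled, after peeling off the order-$(-l-1/2)$ symbol in $|\bxi|\sim\lambda$ and the order-$(-1/2)$ symbol in $|s\xi|$ (the latter giving a gain of $(\tau h)^{-1/2}$ since $|s\xi|\sim\tau h$ on $\mathbb J_\tau$), by Proposition \ref{flocal}(a): the estimate \eqref{plocal1} with the indicated $h$ gives \eqref{A1 ineq1} in the range $1/p+3/q\le 1$, while the $L^2\to L^2$ bound \eqref{l2l2} interpolated against the endpoint case $1/p+3/q=1$ of \eqref{A1 ineq1} yields \eqref{A1 ineq} in the range $1/p+3/q>1$. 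The error term $\mathcal E$ from \eqref{ce} is handled exactly as in the proof of Proposition \ref{case20}: the stationary phase expansion \eqref{bessel2} in $\theta$ reduces it to a multiplier bounded via Mikhlin plus Bernstein, producing a factor $\lambda^{-N'}$ with $N$ chosen large, hence negligible. Case $(b)$, where $\supp\widehat f\subset\bA_\lambda\times\bI_\lambda^\circ$, is identical once one observes, as in Proposition \ref{flocal}(b), that the relevant full-frequency size $|(\xi\cdot\nu,\xi_3)|$ is $\sim\lambda$ rather than $\sim h$, so one invokes part $(b)$ of Proposition \ref{flocal} with $h=\lambda$.

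For case $(c)$, the regime $h\gtrsim\tau\lambda^2$, the same reduction applies but now Proposition \ref{flocal}(c), i.e. the estimate \eqref{plocal2}, is the relevant input; tracking the symbol factors $|\bxi|^{-l-1/2}\sim\lambda^{-l-1/2}$ and $|s\xi|^{-1/2}\sim(\tau h)^{-1/2}$ and summing in $l$ reproduces precisely the bounds \eqref{0jnk local} and \eqref{0jnk local1}, with \eqref{0jnk local1} for $1/p+3/q>1$ obtained again by interpolation with \eqref{l2l2}. Throughout, the passage from $e^{\pm i(t|\bxi|\pm s|\xi|)}$ to the four sign combinations is justified by Remark \ref{pm}, and the various symbol removals use Lemma \ref{fff} with the angular/radial cutoffs scaled appropriately.

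I expect the main obstacle to be bookkeeping: verifying that after the two successive stationary-phase expansions (in $\theta$, producing \eqref{bessel analog}, and implicitly in the definition of $M_l^\pm$) the residual symbols genuinely satisfy Mikhlin-type bounds \emph{uniformly} in $(t,s)\in\mathbb J_\tau$ — this uses crucially that $t+s\cos\theta\gtrsim 1-c_0>0$ on $\mathbb J_0$, so that $a_l(\theta,t,s)=(t+s\cos\theta)^{-(2l+1)/2}$ and all its $\theta$-derivatives are bounded — and that one may legitimately replace $e^{\pm i s|\xi|}$ by the propagator phase $e^{\pm i s|\xi|}$ appearing in $\mathcal U$ without the intermediate substitution $e^{\pm is|\xi_3|}$ that was available (and used) in Proposition \ref{case20} only because there $\lambda\lesssim 1/\tau$. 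Once these uniformity statements are in place, matching exponents is a routine, if tedious, computation driven entirely by the three input estimates \eqref{plocal1}, \eqref{plocal2}, and \eqref{l2l2}.
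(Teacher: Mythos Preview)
Your proposal is correct and follows essentially the same approach as the paper's proof: reduce to the operators $\mathcal C_\pm^\kappa f=|\bar D|^{-1/2}|sD|^{-1/2}\mathcal Uf(\cdot,\kappa t,\pm s)$ via \eqref{exp} and \eqref{bessel analog}, dispose of $\mathcal E$ by \eqref{error} and of $\tilde E_N^\pm$ by a separate easy lemma, then invoke Proposition \ref{flocal}(a),(b),(c) together with Remark \ref{pm} and interpolate with \eqref{l2l2}. Your identification of the symbol gains $\lambda^{-1/2}$ and $(\tau h)^{-1/2}$ and the resulting exponent arithmetic matches the paper exactly.
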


 We can prove Proposition \ref{A1 estimate}  in the same manner as  Proposition \ref{case20}, using the expansions \eqref{exp} and \eqref{bessel analog}.  By \eqref{error}  we may disregard the contribution from  $\mathcal E$. Thus, we only need to handle $M_l^\pm$.  Moreover,  one can easily see the contribution from the multiplier operator  $\tilde E_N^\pm (s|D|)$ is acceptable. 
In fact, we have the following. 
 
\begin{lem}
Let $2\le p\le q\le \infty$ and  $1/p+1/q\le 1$.  If  $\supp \widehat f \subset \bA_\lambda \times \bI_h$ and $h\gtrsim \lambda,$ then we have  the estimates 
\begin{align}\label{A^1 error} 
\big\| |\bar D|^{-\frac12} e^{\pm it|\bar D|}\tilde E_N^\pm (s|{D}|)
f \big\|_{L^{q}(\R^3\times \mathbb J_\tau)} &\lesssim
\tau^\frac1q (\tau h)^{-N'}  h^{\frac{1}{p}-\frac{1}{q}} \lambda^{ \frac{1}{p}-\frac{3}{q}+\epsilon}\Vert f\Vert_{L^p} 
\end{align}
for $1/{p}+3/{q}\le 1$, and 
\begin{align}
\label{A^1 error2} 
\big\| |\bar D|^{-\frac12} e^{\pm it|\bar D|}\tilde E_N^\pm (s|{D}|)
f \big\|_{L^{q}(\R^3\times \mathbb J_\tau)}&\lesssim
\tau^\frac1q (\tau h)^{-N'}  h^{\frac{1}{p}-\frac{1}{q}} \lambda^{\frac{3}{2p}-\frac{3}{2q}-\frac12+\epsilon}\Vert f\Vert_{L^p}
\end{align}
for $1/{p}+3/{q}> 1$. If $\supp \widehat f \subset \bA_\lambda \times \bI_\lambda^\circ$,  \eqref{A^1 error}  and \eqref{A^1 error2}  hold with $h=\lambda$.
\end{lem}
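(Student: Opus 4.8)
The plan is to follow the scheme used for the error term $\tilde E_N^\pm(s|D|)$ in the proof of Proposition \ref{case20} (see \eqref{j<n<k error 2}), the differences being only the extra factor $|\bar D|^{-1/2}$, the integration in $s$, and the endpoint interpolation needed when $1/p+3/q>1$. First I would record the relevant frequency geometry: on $\supp\widehat f\subset\bA_\lambda\times\bI_h$ with $h\gtrsim\lambda$ one has $|\xi|\sim h$, while $(t,s)\in\mathbb J_\tau$ forces $s\sim\tau$; since the ambient hypothesis of Proposition \ref{A1 estimate} is $\lambda\gtrsim1/\tau$, this gives $|s\xi|\sim\tau h\gtrsim1$, so the decay bound \eqref{bessel error decay} for $\tilde E_N^\pm$ is available. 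Applying the Mikhlin multiplier theorem to the order-zero symbol $(s|\xi|)^{N'}\tilde E_N^\pm(s|\xi|)\varphi(\xi/h)$ — which, by the chain rule and \eqref{bessel error decay}, obeys scale-$h$ symbol bounds uniformly in $(t,s)\in\mathbb J_\tau$ — shows that $\tilde E_N^\pm(s|D|)$ has $L^q_x\to L^q_x$ norm $\lesssim(\tau h)^{-N'}$, uniformly in $(t,s)\in\mathbb J_\tau$, and does not enlarge the Fourier support. Since, moreover, $|\bar D|^{-1/2}$ restricted to $|\bxi|\sim\lambda$ has $L^q_x\to L^q_x$ norm $\sim\lambda^{-1/2}$, and since all operators in sight are Fourier multipliers and hence commute with $e^{\pm it|\bar D|}$, I would rewrite $|\bar D|^{-1/2}e^{\pm it|\bar D|}\tilde E_N^\pm(s|D|)f=\big[\,|\bar D|^{-1/2}\tilde E_N^\pm(s|D|)\big]\big(e^{\pm it|\bar D|}f\big)$ and conclude, for each fixed $(t,s)\in\mathbb J_\tau$, that the $L^q_x$ norm of the left side is $\lesssim\lambda^{-1/2}(\tau h)^{-N'}\|e^{\pm it|\bar D|}f\|_{L^q_x}$. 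Integrating over $t\in\mathbb I$ and then over $s\in\mathbb I_\tau$ (a set of measure $\sim\tau$, over which the bound is constant) reduces matters to $\|e^{\pm it|\bar D|}f\|_{L^q_{x,t}(\R^3\times\mathbb I)}$, at the cost of the prefactor $\tau^{1/q}(\tau h)^{-N'}\lambda^{-1/2}$.

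For the range $1/p+3/q\le1$ I would finish by applying Bernstein's inequality in $x_3$ (the $\xi_3$-support lies in $\bI_h$), which costs $h^{1/p-1/q}$, then Minkowski's inequality to move the $L^p_{x_3}$ norm outside; for each fixed $x_3$ the function $e^{\pm it|\bar D|}f(\cdot,x_3)=\mathcal W_\pm\big(f(\cdot,x_3)\big)$ has $\bxi$-support in $\bA_\lambda$, so Theorem \ref{prop:locals} applies on $\R^2\times\mathbb I\subset\R^2\times\mathbb I^\circ$ and costs $\lambda^{1/2+1/p-3/q+\epsilon}$; taking the $L^p_{x_3}$ norm then returns $\|f\|_{L^p(\R^3)}$. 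Collecting the powers of $\lambda$, $\lambda^{-1/2}\cdot\lambda^{1/2+1/p-3/q}=\lambda^{1/p-3/q}$, gives exactly \eqref{A^1 error}.

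For the range $1/p+3/q>1$ I would argue by interpolation. The endpoint $(1/p,1/q)=(1/2,1/2)$ is immediate from Plancherel in $x$ together with the pointwise bound $\big|\,|\bxi|^{-1/2}\tilde E_N^\pm(s|\xi|)\big|\lesssim\lambda^{-1/2}(\tau h)^{-N'}$ on the Fourier support (integration in $t\in\mathbb I$ is trivial and in $s\in\mathbb I_\tau$ yields $\tau^{1/2}$), and this is precisely \eqref{A^1 error2} at $(1/2,1/2)$. The parameter set $\{2\le p\le q\le\infty,\ 1/p+1/q\le1,\ 1/p+3/q\ge1\}$ is the triangle with vertices $(\tfrac12,\tfrac12)$, $(\tfrac14,\tfrac14)$, $(\tfrac12,\tfrac16)$, that is, the convex hull of the point $(\tfrac12,\tfrac12)$ and the segment of the line $1/p+3/q=1$ joining $(\tfrac14,\tfrac14)$ to $(\tfrac12,\tfrac16)$; since every exponent of $\tau$, $\tau h$, $h$, and $\lambda$ in \eqref{A^1 error2} is affine in $(1/p,1/q)$ and matches the corresponding exponent of the $L^2$ bound and of \eqref{A^1 error} along the common boundary, interpolation between the $L^2$ estimate and \eqref{A^1 error} on the line $1/p+3/q=1$ yields \eqref{A^1 error2} throughout that region. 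Finally, when $\supp\widehat f\subset\bA_\lambda\times\bI_\lambda^\circ$ the same argument carries over verbatim with $h$ replaced by $\lambda$: then $|\xi|\sim\lambda$ on the Fourier support and $|s\xi|\sim\tau\lambda\gtrsim1$ by the hypothesis $\lambda\gtrsim1/\tau$, so the decay of $\tilde E_N^\pm$ is still in force, and Bernstein's inequality in $x_3$ contributes $\lambda^{1/p-1/q}$ in place of $h^{1/p-1/q}$.

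The step I expect to need the most care is the first one — peeling off the $(t,s)$-dependent error symbol $\tilde E_N^\pm$: one must extract the full $(\tau h)^{-N'}$ (resp.\ $(\tau\lambda)^{-N'}$) decay while ensuring the residual operator is bounded on $L^q_x$ uniformly in $(t,s)\in\mathbb J_\tau$ and, being a Fourier multiplier, commutes with $e^{\pm it|\bar D|}$, so that the leftover space-time norm is genuinely that of $e^{\pm it|\bar D|}f$ and Theorem \ref{prop:locals} applies without change. Everything else — Bernstein, Minkowski, the two-dimensional local smoothing estimate, and the interpolation against the trivial $L^2$ bound — is routine, and this is exactly the maneuver already performed for \eqref{j<n<k error 2}.
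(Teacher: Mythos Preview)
Your proposal is correct and follows essentially the same route as the paper: peel off $\tilde E_N^\pm(s|D|)$ via Mikhlin at cost $(\tau h)^{-N'}$, absorb $|\bar D|^{-1/2}$ as $\lambda^{-1/2}$, apply Bernstein in $x_3$ and the two-dimensional local smoothing estimate \eqref{locals-est} to obtain \eqref{A^1 error}, then get \eqref{A^1 error2} by interpolating the $L^2$ Plancherel bound against \eqref{A^1 error} on the line $1/p+3/q=1$. The paper's proof is simply a terser version of exactly this argument, referring back to \eqref{j<n<k error 2} for the first step.
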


\begin{proof} 
We first consider the case $\supp \widehat f \subset \bA_\lambda \times \bI_h$ and $h\gtrsim \lambda.$
The estimate \eqref{A^1 error} is easy to show by using  \eqref{locals-est} and Bernstein's inequality (for example,  see \eqref{j<n<k error 2}). 
Note that \eqref{A^1 error2}  with $p=q=2$ follows by Plancherel's theorem. Thus, interpolation between this estimate and \eqref{A^1 error}  for $1/{p}+3/{q}=1$ gives \eqref{A^1 error2} for  $1/{p}+3/{q}>1$.
If $\supp \widehat f \subset \bA_\lambda \times \bI_\lambda^\circ$, the estimates \eqref{A^1 error}  and \eqref{A^1 error2} with $h=\lambda$ follow in the same manner. We omit the detail. 
\end{proof}

\begin{proof}[Proof of Proposition \ref{A1 estimate}]  Recalling  \eqref{bessel analog} and comparing the estimates \eqref{A^1 error} and \eqref{A1 ineq}, we notice  that  it is sufficient to consider  the estimates for the multiplier operators defined by 
$B_j^{\pm}|s{\xi}|^{-\frac{1}{2}-j}e^{\pm i|s{\xi}|}$. Therefore, the matter is reduced to obtaining, instead of  $\mathcal A_t^s$,  the estimates   
 for the operators 
\Be 
\label{central}
\mathcal{C}_\pm^\kappa  f(x,t,s)  :=|\bar D|^{-\frac12}|sD|^{-\frac12} \mathcal U f(x,\kappa t, \pm  s), \quad \kappa = \pm,
\Ee
which constitute the major part. 
We first consider the case $(a)$: $1/\tau\lesssim \lambda\lesssim h\lesssim \tau\lambda^2$ and $\supp \widehat f \subset\bA_\lambda \times \bI_h$. 
Note that $\| \mathcal{C}_\pm^\kappa f(\cdot, s,t)\|_{L^{q}(\R^3)}\lesssim (\tau\lambda h)^{-\frac12} \|\mathcal Uf(\cdot,\kappa t, \pm s)\|_{L^{q}(\R^3)} $ for $\kappa=\pm$. 
Thus, by \eqref{plocal1} and Remark \ref{pm} we get  
\begin{align*} 
 \|\mathcal{C}_\pm^{\kappa}f\|_{L^{q}(\R^3\times \mathbb J_\tau)} 
 \lesssim 
  \tau^{-\frac1{p}}h^{-1+\frac{2}{p}+\epsilon} \lambda^{1-\frac{1}{p}-\frac{5}{q}} \Vert  f\Vert_{L^p},  \quad \kappa=\pm \end{align*}
for $1/{p}+3/{q}\le 1$. Therefore,  we obtain 
\eqref{A1 ineq1}. So, \eqref{A1 ineq} follows from interpolation with \eqref{l2l2}.

If $\supp \widehat f \subset \bA_\lambda \times \bI_\lambda^\circ$, 
by  the estimate  \eqref{plocal1}  with $\lambda=h$ ($(b)$ in Lemma \ref{flocal}) we get the desired estimates \eqref{A1 ineq1} and \eqref{A1 ineq}  with $h=\lambda$. 
This proves $(b)$.

If $1/\tau\lesssim \lambda$, $h\gtrsim \lambda^2 \tau$, and $\supp \widehat f \subset\bA_\lambda \times \bI_h$, 
the estimate \eqref{0jnk local} follows by \eqref{plocal2}.  As a result,  we get \eqref{0jnk local1} by interpolation between \eqref{l2l2} and \eqref{0jnk local}.
\end{proof}

Since  the main contribution to the estimate for $\cA_t^s f$ is from $\mathcal C_t^s f$, by  the same argument in the proof of Proposition \ref{A1 estimate} 
one can easily obtain the next.   

\begin{cor} Let $\alpha, \beta\in \mathbb N_0$. 
\label{cor:A1 estimate} 
$(a)$ If $1/\tau\lesssim \lambda\lesssim h\lesssim \tau\lambda^2$,  then for any $\epsilon>0$  
\begin{align*}
 \Vert \partial_t^\alpha\partial_s^\beta \mathcal A_t^{s} f\Vert_{L^q(\R^3\times \mathbb J_\tau)}  &\lesssim   
 \tau^{\frac{3}{2q}-\frac{1}{2}-\frac{1}{2p}} h^{\beta-\frac{1}{2}+\frac{3}{2p}-\frac{3}{2q}+\epsilon} \lambda^{ \alpha+ \frac{1}{2p}-\frac{1}{2q}-\frac{1}{2}}\Vert f\Vert_{L^p}, &&1/p+ 3/{q}> 1, 
\\
\Vert \partial_t^\alpha\partial_s^\beta \mathcal A_t^{s} f\Vert_{L^q(\R^3\times \mathbb J_\tau)}  &\lesssim    \tau^{-\frac1{p}}h^{\beta-1+\frac{2}{p}+\epsilon} \lambda^{\alpha+ 1-\frac{1}{p}-\frac{5}{q}}\Vert  f\Vert_{L^p},  &&1/{p}+3/{q}\le 1, 
\end{align*}
 hold  whenever $\supp \widehat f \subset\bA_\lambda \times \bI_h$.  $(b)$
If $\supp \widehat f \subset\bA_\lambda \times \bI_\lambda^\circ$, we obtain the above two estimates 
with $h=\lambda$.  $(c)$ When $1/\tau\lesssim \lambda$ and $h\gtrsim \lambda^2 \tau$,
 for any $\epsilon>0$ we have
\begin{align*}
\Vert \partial_t^\alpha\partial_s^\beta\mathcal A_t^{s}  f\Vert_{L^q(\R^3\times \mathbb J_\tau)}&\lesssim
\tau^\frac1q (\tau h)^{-\frac12}  h^{\beta+ \frac{1}{p}-\frac{1}{q}} \lambda^{\alpha+ \frac{1}{p}-\frac{3}{q}+\epsilon}\Vert f\Vert_{L^p},  \,  &&1/{p}+3/{q}\le 1,
\\
\Vert  \partial_t^\alpha\partial_s^\beta\mathcal A_t^{s}  f\Vert_{L^q(\R^3\times \mathbb J_\tau)}&\lesssim
\tau^\frac1q (\tau h)^{-\frac12}  h^{\beta+ \frac{1}{p}-\frac{1}{q}} \lambda^{\alpha-\frac12+  \frac{3}{2p}-\frac{3}{2q}+\epsilon}\Vert f\Vert_{L^p},   \, && 1/{p}+3/{q}> 1, 
\end{align*}
 whenever $\supp \widehat f \subset\bA_\lambda \times \bI_h$.
\end{cor}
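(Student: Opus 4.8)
The plan is to repeat the proof of Proposition~\ref{A1 estimate} line by line, tracking the scalar weights created by $\partial_t^\alpha\partial_s^\beta$. Recall that in that proof one writes $\cA_t^s f(x)=(2\pi)^{-3}\int e^{ix\cdot\xi}\widehat{d\sigma_t^s}(\xi)\widehat f(\xi)\,d\xi$, invokes the expansion \eqref{exp}--\eqref{ce}, further expands each $m_l^\pm$ via \eqref{bessel analog}, and thereby reduces matters to the model operators $\mathcal C_\pm^\kappa$ of \eqref{central} (which carry the main contribution) together with the error symbols $\tilde E_N^\pm(s|D|)$ and $\mathcal E(D,t,s)$. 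The first step is to differentiate this decomposition. On $\supp\widehat f\subset\bA_\lambda\times\bI_h$ with $h\gtrsim\lambda$, inspection of \eqref{ftmeasure}--\eqref{bessel} shows that a $t$-derivative brings down a factor $\bar\xi\cdot(\nabla\widehat{d\mu})\big((t+s\cos\theta)\bar\xi\big)$, which is a symbol of size $\lambda$, while an $s$-derivative brings down $-i\sin\theta\,\xi_3\,\widehat{d\mu}+\cos\theta\,\bar\xi\cdot(\nabla\widehat{d\mu})$, a symbol of size $h$; at the level of the expansion this means that $\partial_t^\alpha\partial_s^\beta$ applied to each $M_l^\pm(D,t,s)$, and likewise to $\mathcal C_\pm^\kappa$ and to $|\bar D|^{-1/2}e^{\pm it|\bar D|}\tilde E_N^\pm(s|D|)$, replaces the relevant multiplier (operator) by $\lambda^\alpha h^\beta$ times one of exactly the same class, up to lower-order terms.

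The second step is to check that these lower-order terms are negligible. Differentiating the amplitudes $a_l(\theta,t,s)=(t+s\cos\theta)^{-(2l+1)/2}$ in $t$ or $s$ keeps them, together with all their $\theta$-derivatives, uniformly bounded for $(t,s)\in\mathbb J_0$, since $t+s\cos\theta\ge(1-c_0)t\gtrsim 1$ there; hence the $t,s$-dependent coefficients $B_j^\pm$ and error functions $\tilde E_N^\pm$ in \eqref{bessel analog} (and $D_w^\pm$, $E_M'$ in the analogue of \eqref{bessel2}) still obey the Mikhlin-type bounds used before, now with an extra $\lambda^\alpha h^\beta$. The one genuine point to verify is that differentiating the amplitudes $|s\xi|^{-1/2-j}$ and $\tilde E_N^\pm(s|\xi|)$ in $s$ produces factors of order $s^{-1}\sim\tau^{-1}$, which is $\lesssim\lambda\le h$ in every regime considered (in (a), (b), (c) one has $1/\tau\lesssim\lambda$); such contributions are therefore dominated by the factor $h$ coming from the oscillation $e^{\pm i|s\xi|}$ and may be absorbed. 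Consequently $\partial_t^\alpha\partial_s^\beta\cA_t^s f$ has exactly the structure analyzed in the proof of Proposition~\ref{A1 estimate}, with every multiplier replaced by $\lambda^\alpha h^\beta$ times one of the same type, so the same applications of Lemma~\ref{fff}, the Mikhlin multiplier theorem, Bernstein's inequality, and the smoothing inputs (Proposition~\ref{flocal} via \eqref{plocal1}--\eqref{plocal2}, Lemma~\ref{lem:locals}, Theorem~\ref{prop:locals}, Remark~\ref{pm}) go through verbatim and yield the claimed bounds in (a) and (c) on the range $1/p+3/q\le 1$. The error pieces acquire the same $\lambda^\alpha h^\beta$ but retain the gains $\lambda^{-N'}$, $\lambda^{2-N'}$ of \eqref{error}, \eqref{A^1 error}, \eqref{A^1 error2}, which remain negligible once $N$ is taken large relative to $\alpha,\beta$.

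Finally, for the remaining range $1/p+3/q>1$, and for case (b) (where $\supp\widehat f\subset\bA_\lambda\times\bI_\lambda^\circ$ and one simply puts $h=\lambda$ throughout), one interpolates with the $L^2$ estimate
\[ \big\|\partial_t^\alpha\partial_s^\beta\cA_t^s f\big\|_{L^2(\mathbb R^3\times\mathbb J_\tau)}\lesssim \lambda^\alpha h^\beta\,\lambda^{-1/2}h^{-1/2}\|f\|_2, \]
which follows at once from Plancherel's theorem together with the pointwise bound $|\partial_t^\alpha\partial_s^\beta\widehat{d\sigma_t^s}(\xi)|\lesssim\lambda^\alpha h^\beta(\tau h)^{-1/2}|\bar\xi|^{-1/2}$ on $\bA_\lambda\times\bI_h$ read off from \eqref{ftmeasure} and \eqref{bessel}, exactly as \eqref{l2l2} was obtained; interpolating this against the endpoint estimates of the previous step along the line $1/p+3/q=1$ gives the second estimate in each of (a) and (c). I do not anticipate any real obstacle here: the entire mechanism is that $\partial_t$ and $\partial_s$ merely insert the scalar weights $\lambda^\alpha$ and $h^\beta$ without touching the oscillation that drives the estimates, and the only bookkeeping requiring care is confirming---as above---that the derivatives of the slowly varying amplitudes (powers of $t+s\cos\theta$ and of $|s\xi|$) never beat the factor $h$ per $\partial_s$, for which the constraints $(t,s)\in\mathbb J_0$ and $1/\tau\lesssim\lambda\lesssim h$ are precisely what is needed.
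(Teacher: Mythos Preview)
Your proposal is correct and follows exactly the approach the paper indicates: the paper's own proof is the single sentence ``Since the main contribution to the estimate for $\cA_t^s f$ is from $\mathcal C_\pm^\kappa f$, by the same argument in the proof of Proposition~\ref{A1 estimate} one can easily obtain the next,'' and you have carefully unpacked that sentence. Your key observations---that on the model operators $\mathcal C_\pm^\kappa$ the derivatives $\partial_t,\partial_s$ bring down $i\kappa|\bar\xi|\sim\lambda$ and $\pm i|\xi|\sim h$ from the phase of $\mathcal U$, that the amplitude derivatives (of $a_l$, of $|s\xi|^{-1/2-j}$, of $\tilde E_N^\pm$) contribute at worst $s^{-1}\sim\tau^{-1}\lesssim\lambda\le h$ and hence are absorbed, and that the error pieces retain their $\lambda^{-N'}$ gains---are precisely what is needed, and the interpolation with the Plancherel $L^2$ bound is exactly how \eqref{l2l2} was used before.
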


\begin{rmk}
By \eqref{exp} and \eqref{bessel analog}  it follows that 
 \[ |\widehat{d\sigma_t^{s}}({\xi})|\lesssim (1+|\xi_3|)^{-1/{2}}(1+|{\bxi}|)^{-1/{2}}.\] 
Furthermore, if $|\bxi|\lesssim 1$, we have $|\widehat{d\sigma_t^{s}}({\xi})|\sim |{\xi}|^{-1/{2}}$ for $|\xi|$ large enough.  
Therefore, by Plancherel's theorem one can see that  the $L^2$ to $L^2_{1/2}$ estimate for $\cA_t^s$  is  optimal.  One can also see that  the part of the  surface $\mathbb T_t^s$ near the sets $\{ \Phi_s^t(\pm \pi/2, \phi): \phi\in [0, 2\pi)\}$ is responsible for the worst decay  
while the Fourier transform of the part (of the surface) away from the sets enjoys better decay. 
\end{rmk}

\section{Two-parameter maximal and smoothing estimates}\label{sec3}
In this section we prove Theorem \ref{global maximal},  \ref{main thm}, and \ref{two para smoothing}.
First, we recall an elementary lemma, which enables us to relate the local smoothing estimate to the estimate for the maximal function. 
\begin{lem}\label{sobo} Let $1\le p\leq \infty$, and  let $I$ and $J$ be closed intervals of length $1$ and $\ell$, respectively. 
Suppose $G$ be a smooth function on the rectangle $R=I\times J$. 
Then, for any $\lambda, h>0$, we have
\begin{align*}
 \sup_{(t,s)\in I\times J}\vert G(t,s) \vert  &\lesssim (1+\lambda^{1/p})(\ell^{-1/p}+ h^{1/p}) \Vert G\Vert_{L^p(R)}+ (\ell^{-1/p}+ h^{1/p})  \lambda^{-1/p'}  \Vert\partial_tG\Vert_{L^p( R)} 
 \\
 &+  
 (1+ \lambda^{1/p})  h^{-1/p'}  \Vert\partial_s G\Vert_{L^p( R)}  + \lambda^{-1/p'}h^{-1/p'}  \Vert\partial_t\partial_s G\Vert_{L^p( R)}. 
 \end{align*}
\end{lem}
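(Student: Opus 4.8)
The plan is to reduce to the one-dimensional Sobolev embedding and then apply it twice, once in each variable, after rescaling both intervals to unit length. First I would normalize. Write $J = [a, a+\ell]$ and introduce the rescaled function $\widetilde G(t,\sigma) = G(t, a+\ell\sigma)$ for $(t,\sigma) \in I \times [0,1]$; then $\partial_\sigma \widetilde G = \ell\,\partial_s G$, and $\|\widetilde G\|_{L^p(I\times[0,1])}^p = \ell^{-1}\|G\|_{L^p(R)}^p$, with analogous identities for the derivatives. Thus it suffices to prove the estimate on a unit square $Q = I \times [0,1]$, namely
\begin{equation*}
\sup_{Q} |G| \lesssim (1+\lambda^{1/p})(1 + h^{1/p})\|G\|_{L^p(Q)} + (1+h^{1/p})\lambda^{-1/p'}\|\partial_t G\|_{L^p(Q)} + (1+\lambda^{1/p})h^{-1/p'}\|\partial_s G\|_{L^p(Q)} + \lambda^{-1/p'}h^{-1/p'}\|\partial_t\partial_s G\|_{L^p(Q)},
\end{equation*}
and then substitute back, tracking the powers of $\ell$: the $\|G\|$ and $\|\partial_t G\|$ terms each acquire $\ell^{-1/p}$ from the $L^p$-normalization, while the $\|\partial_s G\|$ and $\|\partial_t\partial_s G\|$ terms acquire $\ell^{-1/p}$ from normalization but also $\ell^{+1}$ from converting $\partial_\sigma$ back to $\partial_s$, which combine to $\ell^{1-1/p} = \ell^{1/p'}$; since $\ell^{-1/p} + h^{1/p}$ dominates $\ell^{1/p'}h^{-1/p'}\cdot h^{1/p}$... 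I should instead keep the terms separate and just verify each matches a summand on the right-hand side of the stated inequality (the cross terms $\ell^{-1/p}h^{1/p}$ etc.\ are exactly what appear after expanding the products $(1+\lambda^{1/p})(\ell^{-1/p}+h^{1/p})$).

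For the unit-square estimate I would use the scalar inequality: for a smooth function $g$ on a unit interval and any $\mu > 0$,
\begin{equation*}
\sup |g| \lesssim (1 + \mu^{1/p})\|g\|_{L^p} + \mu^{-1/p'}\|g'\|_{L^p}.
\end{equation*}
This is standard — it follows, e.g., by writing $g(x_0)^p$ via the fundamental theorem of calculus from an averaged base point and applying Hölder, or by Littlewood--Paley/Bernstein considerations; alternatively it is the $L^p$ Gagliardo--Nirenberg/Agmon inequality on an interval with the dilation parameter $\mu$ made explicit. Applying this in the $t$-variable for each fixed $s$ gives
\begin{equation*}
\sup_t |G(t,s)| \lesssim (1+\lambda^{1/p})\|G(\cdot,s)\|_{L^p_t} + \lambda^{-1/p'}\|\partial_t G(\cdot,s)\|_{L^p_t},
\end{equation*}
and then applying it again in the $s$-variable to each of the two functions $s \mapsto \|G(\cdot,s)\|_{L^p_t}$ and $s \mapsto \|\partial_t G(\cdot,s)\|_{L^p_t}$ (whose $s$-derivatives are controlled, via Minkowski's integral inequality, by $\|\partial_s G(\cdot,s)\|_{L^p_t}$ and $\|\partial_t\partial_s G(\cdot,s)\|_{L^p_t}$ respectively) with parameter $h$ produces the four terms $(1+\lambda^{1/p})(1+h^{1/p})\|G\|$, $(1+\lambda^{1/p})h^{-1/p'}\|\partial_s G\|$, $(1+h^{1/p})\lambda^{-1/p'}\|\partial_t G\|$, and $\lambda^{-1/p'}h^{-1/p'}\|\partial_t\partial_s G\|$, all in $L^p(Q)$ after applying Minkowski once more to pass from iterated norms to the joint $L^p$ norm.

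The only genuinely delicate point is the interchange of $\sup_t$ (or $L^\infty_t$) with the outer $\|\cdot\|_{L^p_s}$ and the $\partial_s$: one must be careful that $\partial_s$ of $\|G(\cdot,s)\|_{L^p_t}$ is not literally a pointwise derivative of an $L^p$ norm, so I would instead run the argument on $\Psi(s) := \sup_t |G(t,s)|$ directly, bounding it above by the right-hand side of the $t$-estimate (which is a genuine sum of $L^p_t$-norms, smooth in $s$ where the integrands are), and then apply the scalar $s$-estimate to that majorant. This avoids differentiating a supremum. Everything else is a routine bookkeeping of constants and powers of $\ell$, which I would not spell out in full. The main obstacle, such as it is, is purely organizational: making sure the product structure $(1+\lambda^{1/p})(\ell^{-1/p}+h^{1/p})$ emerges correctly from the two successive one-dimensional applications and that the $\ell$-powers from rescaling land on the right summands.
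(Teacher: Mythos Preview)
Your approach is essentially the same as the paper's: iterate the one-dimensional inequality
\[
\sup_{I'}|F| \lesssim |I'|^{-1/p}\|F\|_{L^p(I')} + \lambda^{1/p}\|F\|_{L^p(I')} + \lambda^{-1/p'}\|F'\|_{L^p(I')}
\]
(which the paper obtains from the Gagliardo--Nirenberg estimate $\sup|F|\lesssim |I'|^{-1/p}\|F\|_{L^p}+\|F\|_{L^p}^{1-1/p}\|F'\|_{L^p}^{1/p}$ plus Young's inequality) once in $t$ and once in $s$. The paper works directly on the interval $J$ of length $\ell$ rather than rescaling, but that is cosmetic.

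One comment on the ``delicate point'' you raise. Your proposed fix---apply the scalar $s$-inequality to the majorant $A(s)=(1+\lambda^{1/p})\|G(\cdot,s)\|_{L^p_t}+\lambda^{-1/p'}\|\partial_t G(\cdot,s)\|_{L^p_t}$---still requires $\partial_s A$, i.e.\ differentiating an $L^p_t$-norm in $s$, which is exactly the issue you were trying to avoid. This can be justified (the map $s\mapsto\|G(\cdot,s)\|_{L^p_t}$ is Lipschitz with a.e.\ derivative bounded by $\|\partial_s G(\cdot,s)\|_{L^p_t}$, and the one-dimensional inequality needs only absolute continuity), but the paper sidesteps it more cleanly: after the $t$-step it uses $\sup_s\|G(\cdot,s)\|_{L^p_t}\le \|\sup_s|G(t,s)|\|_{L^p_t}$ and then applies the one-dimensional $s$-inequality to $s\mapsto G(t,s)$ \emph{for each fixed $t$}, finally taking $L^p_t$. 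That way one only ever differentiates $G$ itself.
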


\begin{proof}
We first recall the inequality 
\[\tx \sup_{t\in I'}\vert F(t) \vert  \lesssim    {|I'|^{-1/p}}\Vert F\Vert_{L^p( I')}+\Vert F\Vert_{L^p( I')}^{{(p-1)}/{p}}\Vert\partial_tF\Vert_{L^p( I')}^{1/{p}},\] which holds 
whenever $F$ is a smooth function defined on an interval  $I'$ (for example, see \cite{L}). 
By Young's inequality we have 
\[  \tx\sup_{t\in I'}\vert F(t) \vert  \lesssim   {|I'|^{-1}}\Vert F\Vert_{L^p( I')}+\lambda^{1/p} \Vert F\Vert_{L^p( I')}+  \lambda^{-1/p'}  \Vert\partial_tF\Vert_{L^p( I')}. \]
for any $\lambda>0$.  We use this inequality with $F=G(\cdot, s)$ and $I'=I$  to get    
\[  \sup_{(t,s)\in I\times J}\vert G(t,s) \vert  \lesssim (1+\lambda^\frac1p)  \Vert \sup_{s\in J} |G(t,s)| \Vert_{L^p(I)}  + 
 \lambda^{-1/{p'}}  \Vert \sup_{s\in J} |\partial_t G(t,s)|\Vert_{L^p( I)}. \]
Then, we apply the above inequality again to $G(t,\cdot)$ and $\partial_t G(t,s)$ with $I'=J$ taking $\lambda=h$. 
\end{proof}

In what follows, we frequently use the Littlewood-Paley decomposition.  Let  $\varphi\in C_c^\infty((1-2^{-13},2+2^{-13}))$ such that $\sum_{j=-\infty}^\infty \varphi(s/2^j)=1$ for $s>0$. We set $\varphi_j(s)=\varphi(s/2^j)$, $\varphi_{<j}(s)=\sum_{k<j}\varphi_k(s)$, and $\varphi_{>j}(s)=\sum_{k>j}\varphi_k(s)$. 
For a given $f$ we define  $f^k_j$ and $f^{ <k}_{<j}$ by 
\begin{align*}
\mathcal F (f^k_j) =\varphi_j(|\bar\xi|) \varphi_k(|\xi_3|) \widehat f(\xi),
\quad \mathcal F (f^{ <k}_{<j} ) =\varphi_{{<j}} (|\bar\xi|) \varphi_{<k}(|\xi_3|) \widehat f(\xi),
\end{align*}
and  $f^{ <k}_{<j} $, $f^{ k}_{{<j}} $,    $f^{ \ge k}_{{j}} $, $f_{{<j}} $, and $f^{\ge k} $, etc are similarly defined. In particular, we have $f=\sum_{j,k} f_j^k$.

\subsection{Proof of Theorem \ref{global maximal}}   
By a standard  argument using  scaling,  it is sufficient to show $L^p$ boundedness of  a localized  maximal operator 
\[ \mathfrak{M}f({x})=\sup_{0<s<c_0 t<1}\big| \mathcal A_{t}^sf({x}) \big|. \]
Furthermore, we only need to show that $\mathfrak M$ is bounded on $L^p$ for $2<p\le 4$ since the
other estimates follow by interpolation with the trivial $L^\infty$ bound.  
To this end,   we  consider  
\begin{equation}
\label{Mn}
\mathfrak{M}_{n}f({x})=\sup_{(t,s)\in \mathbb J_{2^{-n}}}\big|\mathcal A_t^{s}f({x})\big|, \quad n \ge 0. 
\end{equation}
In order to obtain estimates for $\mathfrak M_n$, we consider $\mathfrak M_n f_j^k$ for each $j,k$. 
The correct bounds in terms of  $n$, not to mention $j, k$,   are also important for our purpose.  

\begin{lem}
\label{flmax}
Let $k, j\ge n$. $(\tilde a)$ If $ j\leq k\leq 2j-n$, we have
\begin{align}\label{al<j<k<2j-al}
 \Vert \mathfrak M_n f_j^k \Vert_{L^q} \lesssim \begin{cases}
2^{n(\frac{1}{2}+\frac{1}{2p}-\frac{3}{2q})+j(\frac{1}{2p}+\frac{1}{2q}-\frac{1}{2})+k(\frac{3}{2p}-\frac{1}{2q}-\frac{1}{2}+\epsilon)}\Vert f\Vert_{L^p}, & \frac{1}{p}+\frac{3}{q}\geq 1,\\
2^{\frac{n}{p}+j(1-\frac{1}{p}-\frac{4}{q})+k(\frac{2}{p}+\frac{1}{q}-1+\epsilon)}\Vert f\Vert_{L^p},  & \frac{1}{p}+\frac{3}{q}< 1.
\end{cases}
\end{align}
$(\tilde b)$ For $\mathfrak M_n f_j^{<j}$, the same bounds hold with $k=j$. 
$(\tilde c)$ 
If  $2j-n\leq k$, then we have
\begin{equation}\label{al<j,2j-al<k}
\Vert \mathfrak M_n f_j^k \Vert_{L^q}\lesssim \begin{cases}
2^{n(\frac{1}{2}-\frac{1}{q})+j(\frac{3}{2p}-\frac{1}{2q}-\frac{1}{2}+\epsilon)+k(\frac{1}{p}-\frac{1}{2})}\Vert f\Vert_{L^p},\quad & \frac{1}{p}+\frac{3}{q}\geq 1,\\
2^{n(\frac{1}{2}-\frac{1}{q})+j(\frac{1}{p}-\frac{2}{q}+\epsilon)+k(\frac{1}{p}-\frac{1}{2})}\Vert f\Vert_{L^p}, \quad & \frac{1}{p}+\frac{3}{q}< 1.
\end{cases}
\end{equation}
\end{lem}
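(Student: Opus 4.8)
The plan is to reduce the maximal function bounds for $\mathfrak M_n f_j^k$ to the already-established fixed-time estimates for $\cA_t^s$ (Proposition \ref{case20} and Proposition \ref{A1 estimate}, together with their derivative versions in Corollary \ref{cor:A1 estimate}) via the Sobolev-type inequality in Lemma \ref{sobo}. First I would fix $j,k\ge n$ and observe that on the parameter set $\mathbb J_{2^{-n}}=(\mathbb I\times\mathbb I_{2^{-n}})\cap\mathbb J_0$, the function $G(t,s)=\cA_t^s f_j^k(x)$ (for fixed $x$) is smooth, and $G$ lives on a rectangle $R=I\times J$ with $|I|\sim 1$ and $\ell=|J|\sim 2^{-n}$. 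Applying Lemma \ref{sobo} with parameters $\lambda$ and $h$ to be chosen (these play the role of the effective frequency scales in $t$ and $s$), taking $L^p=L^q$ norms in $x$, and using Minkowski's inequality, I get
\[
\|\mathfrak M_n f_j^k\|_{L^q} \lesssim \sum (\text{weights}) \cdot \| \partial_t^{\alpha}\partial_s^{\beta}\cA_t^s f_j^k\|_{L^q(\R^3\times\mathbb J_{2^{-n}})},
\]
where $\alpha,\beta\in\{0,1\}$ and the weights are products of powers of $1+\lambda^{1/p}$, $\ell^{-1/p}+h^{1/p}$, $\lambda^{-1/p'}$, $h^{-1/p'}$ as in the lemma.

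The key choice is $\lambda\sim 2^j$ (the $t$-frequency of $\cA_t^s f_j^k$ is governed by $|\bar\xi|\sim 2^j$, since differentiating $\cA_t^s$ in $t$ brings down a factor $|\bar\xi|$, cf. the phase $t|\bar\xi|$ in $\mathcal U$) and $h\sim 2^k$ or $h\sim 2^j$ depending on which term of the asymptotic expansion dominates — more precisely, $\partial_s$ acting on $\cA_t^s$ brings down $\sim\max(2^k, 2^j)$, but in the regime $k\ge j$ this is $\sim 2^k$. Actually looking at Corollary \ref{cor:A1 estimate}, the $\partial_t^\alpha$ gain is $\lambda^\alpha=2^{j\alpha}$ and the $\partial_s^\beta$ gain is $h^\beta=2^{k\beta}$, so I would take $\lambda\sim 2^j$ and $h\sim 2^k$ in Lemma \ref{sobo}; with $\ell\sim 2^{-n}$, one has $\ell^{-1/p}\sim 2^{n/p}$ and $h^{1/p}\sim 2^{k/p}$, and since $k\ge n$ the dominant piece is $2^{k/p}$... but wait, that is not quite right since in case $(\tilde c)$ the bound has $2^{n(1/2-1/q)}$, suggesting the $\ell^{-1/p}$ term is what survives. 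So the correct reading is that in each regime one must carefully compare $\ell^{-1/p}=2^{n/p}$ against $h^{1/p}=2^{k/p}$ — but $\ell^{-1/p}\le h^{1/p}$ always since $k\ge n$. I therefore expect that the $h$-scale appearing in Lemma \ref{sobo} should be taken smaller, namely tuned so that $h^{1/p}\cdot(\text{estimate weight})$ and $h^{-1/p'}\cdot(\text{estimate weight with }\partial_s)$ balance; the natural choice making $\partial_s$-differentiation "cost-free" is $h\sim 2^k$ in case $(\tilde a)$ and $h\sim 2^k$ again in case $(\tilde c)$, while $\lambda\sim 2^j$ throughout, and then one simply substitutes the four estimates from Corollary \ref{cor:A1 estimate} (choosing part $(a)$, $(b)$, or $(c)$ according to whether $2^j\lesssim 2^k\lesssim 2^{-n}2^{2j}$, i.e. $j\le k\le 2j-n$, or $2^k\gtrsim 2^{-n}2^{2j}$, i.e. $k\ge 2j-n$) and collects exponents.

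Concretely, the steps are: (1) state the Sobolev reduction via Lemma \ref{sobo} with $\lambda=2^j$, $h=2^k$, $\ell=2^{-n}$, yielding $\mathfrak M_n f_j^k$ controlled by a sum of four space-time norms of $\partial_t^\alpha\partial_s^\beta\cA_t^sf_j^k$ with explicit weights; (2) in regime $(\tilde a)$, $j\le k\le 2j-n$, corresponding to $2^j\lesssim 2^k\lesssim \tau 2^{2j}$ with $\tau=2^{-n}$, invoke Corollary \ref{cor:A1 estimate}$(a)$ for each $(\alpha,\beta)\in\{0,1\}^2$ and check that the weighted sum is dominated by the claimed bound \eqref{al<j<k<2j-al} — the two cases $1/p+3/q\ge 1$ and $<1$ tracking the two estimates in part $(a)$; (3) regime $(\tilde b)$ is identical with $\bI_k$ replaced by $\bI_\lambda^\circ$, using part $(b)$ of the corollary; (4) regime $(\tilde c)$, $k\ge 2j-n$, i.e. $2^k\gtrsim \tau 2^{2j}$, use Corollary \ref{cor:A1 estimate}$(c)$ and verify \eqref{al<j,2j-al<k}. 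The main obstacle — really the only nontrivial point — is the bookkeeping: verifying that for each of the four $(\alpha,\beta)$ terms and each subcase of $1/p+3/q$, the product of the Lemma \ref{sobo} weight with the corresponding exponent from Corollary \ref{cor:A1 estimate} is $\le$ the stated exponent in $j$, $k$, $n$ (with the $\epsilon$ absorbed), and in particular that the $(\alpha,\beta)=(1,1)$ term, which carries $\lambda^{-1/p'}h^{-1/p'}=2^{-j/p'-k/p'}$ but also $\partial_t\partial_s$ gains $2^{j+k}$, does not beat the main $(\alpha,\beta)=(0,0)$ term — this is automatic since $2^{j/p+k/p}\cdot 2^{n/p}$-type factors reassemble correctly, but it must be checked. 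I would present step (1) in detail and then, for steps (2)–(4), display the substitution for the $(0,0)$ term explicitly and remark that the remaining three terms give the same or smaller contributions by the same computation, since the net effect of $\partial_t^\alpha\partial_s^\beta$ combined with the Lemma \ref{sobo} weights $\lambda^{-\alpha/p'}$-type factors is size $O(1)$ in the relevant range.
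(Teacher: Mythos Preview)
Your approach is essentially the paper's own: apply Lemma~\ref{sobo} (with exponent $q$, so that after taking $L^q_x$ one lands in $L^q(\R^3\times\mathbb J_{2^{-n}})$) with $\lambda=2^j$, $h=2^k$, $\ell=2^{-n}$, and feed in the four $\partial_t^\alpha\partial_s^\beta$ estimates from Corollary~\ref{cor:A1 estimate} parts $(a)$, $(b)$, $(c)$ according to the regime. Your bookkeeping is right; since $j,k\ge n\ge 0$ one has $1+\lambda^{1/q}\sim 2^{j/q}$ and $\ell^{-1/q}+h^{1/q}\sim 2^{k/q}$, and each of the four weighted terms gives exactly the same net exponent, so the $(0,0)$ computation you outline suffices. (Your hesitation about the $2^{n(1/2-1/q)}$ factor in case $(\tilde c)$ is misplaced: that $n$-dependence comes from the $\tau^{1/q}(\tau h)^{-1/2}$ in Corollary~\ref{cor:A1 estimate}$(c)$, not from the Sobolev weights.)

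The one point you gloss over, and which is in fact the only additional content in the paper's proof, is that $\mathbb J_{2^{-n}}=(\mathbb I\times\mathbb I_{2^{-n}})\cap\mathbb J_0$ is \emph{not} a rectangle when $n$ is small (specifically when $2^{-n+1}>c_0$), because the constraint $s<c_0t$ cuts off a triangle. Lemma~\ref{sobo} requires a genuine rectangle. The fix is easy: for those finitely many small $n$, cover $\mathbb J_{2^{-n}}$ by $O_{c_0}(1)$ closed dyadic squares $Q$ of side length $\sim 1-c_0$ chosen so that each $Q$ stays inside a slightly enlarged region $\{(t,s):\ s<\tfrac12(1+c_0)t\}$ on which the estimates of Corollary~\ref{cor:A1 estimate} still apply with comparable constants; then apply Lemma~\ref{sobo} on each $Q$ and sum. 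You should add a sentence to this effect.
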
 

\begin{proof}  
 Let  $n_0$ be the smallest integer such $ 2^{-n_0+1}\le c_0$. 
If $n\ge n_0$, then $\mathbb J_{2^{-n}}= \mathbb I\times \mathbb I_{2^{-n}}$. 
Since $n\le k, j$, using Lemma \ref{sobo}, one can obtain $(\tilde a)$, $(\tilde b)$, and $(\tilde c)$ from  $(a)$, $(b)$, and $(c)$ in Corollary \ref{cor:A1 estimate}, respectively. 
For $n< n_0$,  we can not directly apply Lemma \ref{sobo}. However, this can be  easily overcome by a simple  modification. Indeed,  we cover $\bigcup_{n=0}^{n_0-1} \mathbb J_{2^{-n}}$ with essentially 
disjoint closed dyadic cubes $Q$ of side length $L\in ( 2^{-7} (1-c_0), 2^{-6} (1-c_0)]$  so that  $\bigcup Q\subset \mathbb J_0':= \{(t,s): 2^{1-n_0} \le s< 2^{-1}(1+c_0)t, 1\le t\le 2\}$.
Thus, we note 
\[ \tx \| \sup_{(t,s)\in \mathbb J_{2^{-n}}} |\mathcal A_t^{s}g |\big\|_{L^q} \lesssim \sum_Q \| \sup_{(t,s)\in Q} |\mathcal A_t^{s}g |\big\|_{L^q}. \] 
for  $n< n_0$.  We  may now apply Lemma \ref{sobo} to  $\mathcal A_t^{s}g$ and $Q$. Since $\bigcup Q\subset \mathbb J_0'$, we  clearly have the same maximal bounds  up to a constant multiple 
for $n< n_0$.  
\end{proof}

 We denote $\mathrm Q_l^m=\mathbb J_0\cap (\bI_{2^{-l}}\times \bI_{2^{-m}})$ for simplicity. Then, it follows that   
\[  \mathfrak{M}f(x)  = \sup_{m\geq l\geq 0}\sup_{(t,s)\in  \mathrm Q_l^m}|\mathcal A_t^{s}f|.\]
Decomposing $f=\sum_{j,k} f_j^k$, we have 
\begin{align*}
\mathfrak{M}f(x) \le  \mathfrak N^1f + \mathfrak  N^2f +\mathfrak N^3f+\mathfrak N^4f,
\end{align*}
where
\begin{align*}
 \mathfrak N^1f =  \sup_{m\geq l\geq 0}\sup_{(t,s)\in  \mathrm Q_l^m}|\mathcal A_t^{s}f_{\le l}^{\le m}|, 
 \qquad 
  \mathfrak N^2 f =  \sup_{m\geq l\geq 0}\sup_{(t,s)\in  \mathrm Q_l^m}|\mathcal A_t^{s}f_{\le l}^{> m} |,
 \\
 \mathfrak N^3 f =  \sup_{m\geq l\geq 0}\sup_{(t,s)\in  \mathrm Q_l^m}|\mathcal A_t^{s} f_{> l}^{\le m} |, 
 \qquad  
 \mathfrak N^4 f =  \sup_{m\geq l\geq 0}\sup_{(t,s)\in  \mathrm Q_l^m}|\mathcal A_t^{s} f_{> l}^{> m} |.
\end{align*}
The maximal operators 
$\mathfrak N^1,\mathfrak N^2$ and $\mathfrak N^3$ can be handled by using the $L^p$ bounds on the Hardy-Littlewood maximal and the circular maximal functions.

We first handle $\mathfrak N^1f$.  We set 
$\bar K =\mathcal F^{-1}(\varphi_{\le 1 }(|\,\bxi\,|))$ and $K_3=  \mathcal F^{-1}(\varphi_{\le 1 }(|\xi_3|))$.  
Since $\mathcal F(f_{\le l}^{\le m})(\xi) =  \varphi_{\le l }( \bxi) \varphi_{\le m} (\xi_3)  \widehat f (\xi)$ and $\varphi_{\le m }(t)= \varphi_{\le 1 }(2^{-m}t)$, we have
\[ f_{\le l}^{\le m}(x)=   2^{2l+m} \int f({x}-{y})\bar K(2^l \bar y)K_3 (2^m y_3) dy .\]  
Hence, it follows that 
\[ \mathcal A_t^{s} f_{\le l}^{\le m} ({x})=2^{2l+m} \int_{\mathbb{T}_t^{s}}   \int f({x}-{y})\bar K(2^l (\bar y-\bar z) )K_3 (2^m(y_3-z_3)) dy\, d\sigma_{t}^s (z).\]
If $(t,s)\in  \mathrm Q_l^m$,  $|\bar K(2^l (\bar y-\bar z) )K_3 (2^m (y_3-z_3)|\le C(1+2^l|\bar y|)^{-M}(1+2^m|y_3|)^{-M}$ for any $M$. 
By a standard argument using dyadic decomposition, we see  
\[ \mathfrak N^1f(x) \lesssim   \bar HH_3 f(x), \]
where $\bar H$ and $H_3$ denote the $2$-d and $1$-d Hardy-Littlewood maximal operators acting on $\bar x$ and $x_3$, respectively.  The right hand side is bounded by the strong maximal function. Thus,   $\mathfrak N^1$ is bounded on $L^p$ whenever $p>1$.
 
  Next, we consider $\mathfrak N^2$. Since $f_{\le l}^{> m}(x)=2^{2l} \big(f^{>m}(\cdot, x_3)\ast \bar K(2^l\cdot)\big)(\bar x)$, we have   \begin{align*}
\mathcal A_t^{s}f_{\le l}^{> m} =2^{2l}\int f^{>m}(\bar x-\bar y,x_3-s\sin\theta) \bar K(2^l(\bar y-  (t+s\cos\theta)\vphi )) d\theta d\phi d\bar y.
\end{align*}
Note that $s<c_0t\lesssim 2^{-l}$, so we have $|\bar K(2^l(\bar y-  (t+s\cos\theta)\vphi ))|\lesssim  C(1+2^l|\bar y|)^{-M}$ for any $M$. 
Similarly as above, this gives 
\[ 
|\mathcal A_t^{s}f_{\le l}^{> m}(x) |\lesssim  \int_0^{2\pi} \bar H f^{> m}( \bar x, x_3-s\sin\theta) d\theta  \lesssim  \int_0^{2\pi} \bar H H_3 f( \bar x, x_3-s\sin\theta) d\theta
\]
 For the second inequality, we use $f^{>m}=f-f^{\le m}$ and $|f|, |f^{\le m}|\le H_3 f$.  As a result, we have
\[  \mathfrak N^2 f(x)\lesssim \sup_{s>0} \int_0^{2\pi} \bar H H_3 f( \bar x, x_3-s\sin\theta) d\theta.  \] 
To handle the consequent maximal operator, we use the following simple lemma.
\begin{lem}\label{reduced circular}  
For $p>2$, we have the estimate 
\[ \Big\| \sup_{0<s<1}\Big| \int g(x_3-s\sin\theta)d\theta \Big| \Big\|_{L^p_{x_3}} \lesssim \Vert g\Vert_{L^p}.\]
\end{lem}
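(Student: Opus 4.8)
The plan is to deduce this from Bourgain's circular maximal theorem by realizing the one‑dimensional operator as the restriction to a strip of the planar circular maximal operator. For $g\in L^p(\mathbb R)$ write $\mathfrak{T}g(x_3)=\sup_{0<s<1}\bigl|\int_0^{2\pi}g(x_3-s\sin\theta)\,d\theta\bigr|$. Fix $\chi\in C_c^\infty(\mathbb R)$ with $\chi\equiv 1$ on $[-2,2]$, and set $G(x_1,x_2)=\chi(x_1)\,g(x_2)$; then $G\in L^p(\mathbb R^2)$ with $\|G\|_{L^p(\mathbb R^2)}=\|\chi\|_{L^p}\|g\|_{L^p}$.

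The key point is that $\chi$ is invisible to the planar circular average on the strip $\{|x_1|\le 1\}$: since $0<s<1$ and $|\cos\theta|\le 1$, we have $|x_1-s\cos\theta|\le 2$ whenever $|x_1|\le 1$, hence $\chi(x_1-s\cos\theta)=1$ and
\[ \frac{1}{2\pi}\int_0^{2\pi}G\bigl(x_1-s\cos\theta,\,x_2-s\sin\theta\bigr)\,d\theta=\frac{1}{2\pi}\int_0^{2\pi}g(x_2-s\sin\theta)\,d\theta \]
for all $|x_1|\le1$, $0<s<1$, and $x_2\in\mathbb R$. Taking the supremum over $0<s<1$, the right side equals $(2\pi)^{-1}\mathfrak{T}g(x_2)$ and is independent of $x_1$, while the left side is at most $\mathcal{M}_{\mathrm{circ}}G(x_1,x_2):=\sup_{s>0}\bigl|\frac{1}{2\pi}\int_0^{2\pi}G(\cdot-s\cos\theta,\cdot-s\sin\theta)\,d\theta\bigr|$.

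Next I would integrate over the strip $\{|x_1|\le1\}\times\mathbb R$: for $p>2$,
\[ 2^{1/p}(2\pi)^{-1}\|\mathfrak{T}g\|_{L^p(\mathbb R)}=\Bigl\|\sup_{0<s<1}\tfrac{1}{2\pi}\Bigl|\int_0^{2\pi}G(\cdot-s\cos\theta,\cdot-s\sin\theta)\,d\theta\Bigr|\Bigr\|_{L^p(\{|x_1|\le1\}\times\mathbb R)}\le\|\mathcal{M}_{\mathrm{circ}}G\|_{L^p(\mathbb R^2)}\lesssim\|G\|_{L^p(\mathbb R^2)}, \]
where the last inequality is Bourgain's circular maximal theorem \cite{B2} (see also \cite{MSS2}). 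Since $\|G\|_{L^p(\mathbb R^2)}=\|\chi\|_{L^p}\|g\|_{L^p}\lesssim\|g\|_{L^p}$, this is the assertion. There is no genuine obstacle: the only thing to record is that $G\in L^p(\mathbb R^2)$ and that $\chi$ does nothing on the relevant strip. As an alternative one could substitute $u=\sin\theta$ to rewrite $\int_0^{2\pi}g(\cdot-s\sin\theta)\,d\theta$ as convolution with the dilates of the finite measure $2(1-u^2)^{-1/2}\,du$ on $(-1,1)$, whose Fourier transform decays like $|\xi|^{-1/2}$, and then run a Littlewood--Paley decomposition together with a Sobolev embedding in $s$ and a maximal Bernstein inequality to get the same bound for $p>2$; but the reduction to the circular maximal theorem above is shorter.
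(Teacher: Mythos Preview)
Your proof is correct and follows essentially the same route as the paper: both extend $g$ to a function on $\mathbb R^2$ that is constant (or cut off) in an auxiliary variable, observe that on a bounded strip the planar circular average reduces to the one-dimensional average in question, and then invoke Bourgain's circular maximal theorem. The only cosmetic difference is that the paper uses the characteristic function of $[-10,10]$ in the auxiliary variable where you use a smooth cutoff $\chi$.
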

\begin{proof}
Let us define $\widetilde g$ on $\mathbb R^2$ by setting $\widetilde{g}(z, x_3)=g(x_3)$ for $x_3\in\R$ and $-10\leq z\leq 10$, and $\widetilde{g}(z,x_3)=0$ if $|z|>10$. 
Note that $\int g(x_3-s\cos\theta)d\theta= \int \widetilde{g}(z-s\cos\theta,x_3-s\sin\theta)d\theta$ for $|z|\le 1, 0< s<1$. So,  
$\sup_{0<s<1}| \int g(x_3-s\sin\theta)d\theta| \lesssim M_{cr} \widetilde g(z,x_3)$ for $|z|\le 1$, where $M_{cr}$ denotes the circular maximal operator.
By the circular maximal theorem \cite{B2}, $\| \sup_{0<s<1} | \int g(x_3-s\sin\theta)d\theta \|_{L^p_{x_3}} $ is bounded above by a constant times 
$\Vert \widetilde{g}\Vert_{L^p_{x_3,z}}=20^{1/p}\Vert g\Vert_{L^p_{x_3}}$
for $p>2$.
\end{proof}
 
 Therefore, by Lemma \ref{reduced circular} and $L^p$ boundedness of $\bar H$ and $H_3$  we see that $\mathfrak N^2$ is bounded on $L^p$ for $p>2$.

 $\mathfrak N^3$ can be handled similarly. Since  $f_{>l}^{\le m}=2^{m} \big(f_{>l}(\bx, \cdot)\ast K_3(2^m\cdot)\big)(x_3)$, we get 
  \begin{align*}
\mathcal A_t^{s} f_{> l}^{\le m}({x})=2^m\int f_{>l}(\bx-(t+s\cos\theta)\vphi,x_3-y_3) K_3(2^m(y_3- s\sin\theta) ) d\theta d\phi dy_3.
\end{align*}
Since  $s\lesssim 2^{-m}$,  $| K_3(2^m(y_3- s\sin\theta) )|\lesssim (1+ 2^m|y_3|)^{-N}$. Hence,  using $f_{>l}=f-f_{\le l}$ and $|f|, |f_{\le l}|\le\bar H f$, we have 
\[ |\mathcal A_t^{s} f_{> l}^{\le m}({x})|\lesssim  \int_{0}^{2\pi} H_3\bar Hf(\bx-(t+s\cos\theta)\vphi,x_3)d\phi  \lesssim   M_{cr} [(H_3\bar Hf)(\cdot, x_3)](\bar x).   \]  Thus,  $\mathfrak N_3f(x) \lesssim  M_{cr} [(H_3\bar Hf)(\cdot, x_3)](\bar x)$. Using the circular maximal theorem, we see that  $\mathfrak N^3$ is bounded on $L^p$ for $p>2$.

Finally, we consider $\mathfrak N^4$.   For simplicity,  
we set 
\[ \tx\mathfrak A_{l,j}^{m,k} f =\sup_{(t,s)\in  \mathrm Q_l^m}|\mathcal A_t^{s}f_j^k|.  \]
Decomposing $\sum_{j\geq l,k\geq m} = \sum_{m\le k \le j} + \sum_{j< k\le 2j-m} +  \sum_{l\le j,\, m\vee(2j-m)< k }$,  we have
\begin{align*}
\mathfrak N^4f \le  \sup_{m\geq l\geq 0} \mathfrak S_1^{m,l} f +   \sup_{m\geq l\geq 0} \mathfrak S_2^{m,l}  f + \sup_{m\geq l\geq 0} \mathfrak S_3^{m,l} f, 
 \end{align*}
 where 
  \[ \mathfrak S_1^{m,l} f =\sum_{m\le k \le j} \mathfrak A_{l,j}^{m,k} f , \qquad   \mathfrak S_2^{m,l} f =   \!\!\! \sum_{j< k\le 2j-m}  \!\!\!  \mathfrak A_{l,j}^{m,k} f, \qquad   
  \mathfrak S_3^{m,l} f =  \!\!\! \sum_{l\le j,\, m\vee(2j-m)< k }  \!\!\! \mathfrak A_{l,j}^{m,k} f. \] 
  Here, $a\vee b$ denotes $\max(a,b)$. 
 Thus, the matter is reduced to showing, for $\kappa=1,2,3$,  
\Be\label{est123} \big\|  \sup_{m\geq l\geq 0} \mathfrak S_\kappa^{m,l} f \big\|_{L^p}  \lesssim  C\|f\|_p, \quad   p\in (2,4].\Ee

 We consider $\mathfrak S_1^{m,l}$ first. 
 Recalling \eqref{Mn},  by scaling we have  
 \Be 
 \label{jk}  \mathfrak A_{l,j}^{m,k} f (x)
  =\mathfrak{M}_{m-l}(f_j^k(2^{-l}\cdot))(2^{l}{x})=\mathfrak{M}_{m-l}[f(2^{-l}\cdot)]_{j-l}^{k-l}(2^{l}{x}).
 \Ee
 So, reindexing $k\to k+l$ and $j\to j+l$ gives
  \begin{align*}
\tx \mathfrak S_1^{m,l} f (x) \leq  \sum_{m-l\leq k\leq j}\mathfrak{M}_{m-l} [f(2^{-l}\cdot)]_j^k(2^{l}{x}).
\end{align*}
Thus, the imbedding $\ell^p\subset \ell^\infty$ and  Minkowski's inequality yield 
\[\|  \sup_{m\geq l\geq 0} \mathfrak S_1^{m,l} f \|_{L^p}^p  \leq   \sum_{m\geq l\geq 0} \Big( \sum_{m-l\leq k\leq j}\big\Vert \mathfrak{M}_{m-l}[f(2^{-l}\,\cdot)]_j^k(2^{l}{\,\cdot})\big\Vert_{L^p}\Big) ^p.\]
We now use  $(\tilde b)$ in Lemma \ref{flmax} (with $n=m-l$)  for $ \mathfrak{M}_{m-l}[f(2^{-l}\,\cdot)]_j^k(2^{l}{\,\cdot})$.
 Thus, by the first estimate in  \eqref{al<j<k<2j-al} with $k=j$, we have
\[
\|  \sup_{m\geq l\geq 0} \mathfrak S_1^{m,l} f \|_{L^p}^p \lesssim  \sum_{m\geq l\geq 0} 2^{(m-l)p(\frac{1}{2}-\frac1p)}\Big(  \sum_{m-l\leq j} 2^{-2j(\frac12-\frac{1}{p})}2^{\epsilon j}\Vert f_{j+l}\Vert_{L^p} \Big )^p
\]
for any $\epsilon>0$ for  $2< p\leq 4$. Taking $\epsilon>0$ small enough, we have 
\[ \|  \sup_{m\geq l\geq 0} \mathfrak S_1^{m,l} f \|_{L^p}^p  \lesssim  \sum_{m\geq l\geq 0 } \sum_{m-l\leq j} 2^{-a(m-l)}2^{-bj} \Vert f_{j+l}\Vert_{L^p}^p   \]
for some positive numbers $a,b$ for $2< p\leq 4$. Changing the order of summation, we see the right hand side is bounded above by 
$C\sum_{j\ge 0}^\infty  2^{-bj} \sum_{l\ge 0}   \Vert f_{j+l}\Vert_{L^p}^p$, which is bounded by $C\|f\|_p^p$, as can be seen, for example,   using   the Littlewood-Paley inequality. 
Consequently, we obtain \eqref{est123} for $\kappa=1$.

We now consider $\mathfrak S_2^{m,l}$.  As before, by  the imbedding $\ell^p\subset \ell^\infty$,   Minkowski's inequality,  \eqref{jk}, and  reindexing $k\to k+l$ and $j\to j+l$,  we get
\[\big\|  \sup_{m\geq l\geq 0} \mathfrak S_2^{m,l} f \big\|_{L^p}^p  \leq   \sum_{m\geq l\geq 0} \Big( \sum_{j< k\le 2j-(m-l)}
\big\Vert \mathfrak{M}_{m-l}[f(2^{-l}\,\cdot)]_j^k(2^{l}{\,\cdot})\big\Vert_{L^p}\Big) ^p.\]
The first inequality in \eqref{al<j<k<2j-al} with $n=m-l$ gives
\[\big\|  \sup_{m\geq l\geq 0} \mathfrak S_2^{m,l} f \big\|_{L^p}^p  \leq   \sum_{m\geq l\geq 0}  2^{(m-l)p(\frac{1}{2}-\frac1p)} \Big( \sum_{j< k\le 2j-(m-l)}
2^{-(j+k)(\frac{1}{2}-\frac{1}{p})}2^{\epsilon k} \Vert f_{j+l}\Vert_{L^p} \Big) ^p\]
for any $\epsilon>0$ for $2<p\le 4$. Note that $m-l< j$ for the inner sum, which is bounded by a constant times $ \sum_{m-l\leq j}   2^{-2j(1/{2}-1/{p})}2^{\epsilon j} \Vert f_{j+l}\Vert_{L^p} $ by taking sum over $k$ with an $\epsilon>0$ small enough. 
Since $p>2$, similarly,  we have  
\[ \|  \sup_{m\geq l\geq 0} \mathfrak S_2^{m,l} f \|_{L^p}^p  \lesssim  \sum_{m\geq l\geq 0 } \sum_{m-l\leq j} 2^{-a(m-l)}2^{-bj} \Vert f_{j+l}\Vert_{L^p}^p   \]
for some $a,b>0$ and $2< p\leq 4$.  Thus, the right hand is bounded above by $C\|f\Vert_{L^p}^p$.  
This proves \eqref{est123} for $\kappa=2$.

Finally,  we consider  $\mathfrak S_3^{m,l} f$, which we can handle in the same manner as before. Via the imbedding $\ell^p\subset \ell^\infty$, \eqref{jk}, and reindexing after applying Minkowski's inequality  
we have 
\[\|  \sup_{m\geq l\geq 0} \mathfrak S_2^{m,l} f \|_{L^p}^p  \lesssim    \sum_{m\geq l\geq 0} \Big( \sum_{0\le j, n\vee (2j-n)< k } 
\big\Vert \mathfrak{M}_{n}[f(2^{-l}\,\cdot)]_j^k(2^{l}{\,\cdot})\big\Vert_{L^p}\Big) ^p,\]
where  $n:=m-l$. Breaking $ \sum_{0\le j, n\vee (2j-n)< k } =  \sum_{0\le j\le n\le k} +  \sum_{n<j, (2j-n)< k } $, we apply the first estimate in \eqref{al<j,2j-al<k} 
to get 
\[  \|\sup_{m\geq l\geq 0} \mathfrak S_2^{m,l} f \|_{L^p}^p  \lesssim     \sum_{m\geq l\geq 0}   2^{np(\frac{1}{2}-\frac{1}{p})}(  \mathrm S_1^p+ \mathrm S_2^p )  \]
for any $\epsilon >0$ and $2<p\le 4$, 
where 
\[  \mathrm S_1:=  \sum_{0\le j\le n\le k}  2^{(j+k)(\frac{1}{p}-\frac{1}{2})}2^{\epsilon j}\Vert f_{j+l}^{k+l} \Vert_{L^p}, \quad  \ \mathrm S_2:= \!\!  \sum_{n<j, (2j-n)< k } \!\!\! 2^{(j+k)(\frac{1}{p}-\frac{1}{2})}2^{\epsilon j}\Vert f_{j+l}^{k+l} \Vert_{L^p}. \]
For the second sum $\mathrm S_2$,  we note that $k> j>n$. Thus, taking $\epsilon>0$ small enough,  we get 
 \[ \sum_{m\geq l\geq 0}  2^{np(\frac{1}{2}-\frac{1}{p})} \mathrm S_2^p
 \lesssim  \sum_{m\geq l\geq 0}  \sum_{m-l\leq j} 2^{-a(m-l)}2^{-bj} \Vert f_{j+l}\Vert_{L^p}^p\]
for some $a,b>0$ since $p>2$. Thus, the right hand side is bounded by $C\| f \|_{L^p}^p$. 
To handle  $\mathrm S_1$,   
note that $ (\sum_{0\le j\le n\le k} 2^{(j+k)(\frac{1}{p} -\frac{1}{2})})^{p/p'}
\lesssim 2^{ n(p-1)(\frac{1}{p} -\frac{1}{2})}$. Thus, by H\"older's inequality we have
\[  \mathrm S_1^p
 \lesssim   2^{n(p-1)(\frac{1}{p}-\frac{1}{2})}  \sum_{0\le j\le  n\le k}   2^{(j+k)(-\frac{1}{2}+\frac{1}{p})}2^{\epsilon p j}  \Vert f_{j+l}^{k+l}\Vert_{L^p}^p.\]
Hence, changing the order of summation, we get 
\[ \sum_{m\geq l\geq 0}   2^{np(\frac{1}{2}-\frac{1}{p})} \mathrm S_1^p \lesssim   
\sum_{0\le j}   2^{j(\frac{1}{p}-\frac{1}{2}+ \epsilon p)} \mathrm S_{1,j}^p, \]
 where 
\[  \mathrm S_{1,j}^p=  \sum_{m\geq l\geq 0} \,\, \sum_{m-l\le k}   2^{(m-l)(\frac{1}{2}-\frac{1}{p})} 2^{k(-\frac{1}{2}+\frac{1}{p})}\Vert f_{j+l}^{k+l}\Vert_{L^p}^p.\]
Therefore, since  $2<p\le 4$, taking a sufficiently small $\epsilon>0$,  we obtain  the desired inequality $\sum_{m\geq l\geq 0}   2^{np(\frac{1}{2}-\frac{1}{p})} \mathrm S_1^p\lesssim  \|f\Vert_{L^p}^p$ if we show that $\mathrm S_{1,j}^p \lesssim  \|f\Vert_{L^p}^p$ for $0\le j$.  To this end, rearranging the sums, we observe 
\[    \mathrm S_{1,j}^p=  \sum_{0\le k}\,\, \sum_{0\le l}\,\, \sum_{l\le m\le  l+k}  2^{(m-l)(\frac{1}{2}-\frac{1}{p})} 2^{k(-\frac{1}{2}+\frac{1}{p})}\Vert f_{j+l}^{k+l}\Vert_{L^p}^p \lesssim  \sum_{0\le k} \,\,\sum_{0\le l}   \Vert f_{j+l}^{k+l}\Vert_{L^p}^p . \] 
Since $\sum_{0\le k}   \Vert f_{j+l}^{k+l}\Vert_{L^p}^p\lesssim   \Vert f_{j+l}\Vert_{L^p}^p$, by the same  argument as above  it follows that $ \mathrm S_{1,j}^p \le C\|f\Vert_{L^p}^p$. 
Consequently, we obtain \eqref{est123} for $\kappa=3$. 
\qed

\subsection{Proof of Theorem \ref{main thm}}
Since $\mathbb J$ is a compact subset of $\mathbb J_\ast$,  there are constants $c_0\in (0,1)$, and   $m_1, m_2>0$  such that 
\[ \mathbb J\subset \{(t,s): m_1 \le  s\le m_2, s<c_0t\}.\]  
Therefore, via finite decomposition and scaling it is sufficient  to show that the maximal operator 
\[   \mathfrak{M}_{c} f(x):= \sup_{(t,s)\in \mathbb J_0} |\cA_t^s f(x)|\]  is bounded from $L^p$ to $L^q$ for $(1/p,1/q)\in \operatorname{int} \mathcal Q$.  To do this,  we decompose $f=f_{\ge 0} +f_{<0}^{\ge 0}+ f_{<0}^{<0}$ to have
\[
\mathfrak{M}_{c} f\lesssim \mathfrak{M}_{c}  f_{\ge 0}  +\mathfrak{M}_{c}f_{<0}^{\ge 0}+\mathfrak{M}_{c} f_{<0}^{<0}.
\]
The last two operators are easy to deal with. As before, we have 
$\mathfrak{M}_{c} f_{<0}^{<0}(x)\lesssim 
(1+|\cdot|)^{-M} \ast |f|(x)$, hence $ \Vert \mathfrak{M}_{c} f_{<0}^{<0}\Vert_{L^q}\lesssim \Vert f\Vert_{L^p}$ for $1\le p\le q\le \infty$. 
Concerning  $\mathfrak{M}_{c}f_{<0}^{\ge 0}$, we use  Lemma \ref{sobo} and  \eqref{000st} to get 
\[  \|\mathfrak{M}_{c}f_{<0}^{k}\|_{L^q}\lesssim  2^{k(-\frac{1}{2}+\frac{1}{p})}\Vert f\Vert_{L^p},   \quad  1\le p\le q\le \infty, \] 
for $k\ge 0$.  So, it follows that $\|\mathfrak{M}_{c}f_{<0}^{\ge 0}\|_{L^q}\lesssim  \Vert f\Vert_{L^p}$ for $2< p\le q$. Thus, we only need to show that 
 $\mathfrak{M}_{c}  f_{\ge 0}$ is 
bounded from $L^p$ to $L^q$ for $(1/p,1/q)\in \operatorname{int} \mathcal Q$. 

Decomposing $f_{\ge 0}= \sum_{j\ge 0} (  f_j^{\mathsmaller{<j}} +  \sum_{j\leq k\leq 2j} f_j^k + \sum_{k> 2j} f_j^k)$, we have
\[\tx \mathfrak{M}_{c}  f_{\ge 0}\le    \sum_{j\geq 0} ( \mathfrak S_j^1 f+\mathfrak S_j^2 f),\]
where 
\[ \tx \mathfrak S_j^1 f=  \mathfrak{M}_{c}f_j^{\mathsmaller{<j}} +  \sum_{j\leq k\leq 2j}\mathfrak{M}_{c}  f_j^k, \qquad  \mathfrak S_j^2 f=\sum_{k> 2j}\mathfrak{M}_{c}f_j^k. \]

We first show $L^p$--$L^q$ bound on $\mathfrak{M}_{c}  f_{\ge 0}$ for $(1/p, 1/q)$ contained in the interior of the  triangle $\mathfrak T$ with vertices 
$(1/4, 1/4),$ $P_1,$ and $(1/2,1/2)$ (see Figure \ref{fig1}).  The first estimate in \eqref{al<j<k<2j-al} with $2^n\sim 1$ gives
\[
\Vert \mathfrak{M}_{c}f_j^k\Vert_{L^q}\lesssim 
2^{j(-\frac{1}{2}+\frac{1}{2p}+\frac{1}{2q})}2^{k(-\frac{1}{2}+\frac{3}{2p}-\frac{1}{2q}+\epsilon)}\Vert f\Vert_{L^p}, \quad  {1}/{p}+{3}/{q}\ge 1,
\]
for  $0\leq j\leq k\leq 2j$. $\mathfrak{M}_{c} f_{j}^{<j}$ satisfies the same bound with $k=j$. Note that 
$-{3}/{2}+{7}/(2p)-{1}/(2q)<0$,   $-1+2/p<0$, and ${1}/{p}+{3}/{q}> 1$ if  $(1/p,1/q)\in \operatorname{int}\mathfrak T$ (Figure \ref{fig1}). 
Thus, using those  estimates, we get 
\[ \tx \sum_{j\geq 0} \| \mathfrak S_j^1 f \|_{L^p}\lesssim  \sum_{j\geq 0} \big(  2^{j(-\frac{3}{2}+\frac{7}{2p}-\frac{1}{2q}+\epsilon)} + 2^{j(-1+\frac{2}{p}+\epsilon)} \big) \Vert f\Vert_{L^p} \lesssim \Vert f\Vert_{L^p}  \]
for $(1/p,1/q)\in \operatorname{int}\mathfrak T$.  
We now consider $\sum_{j\geq 0}  \mathfrak S_j^2 f$. By the first estimate in \eqref{al<j,2j-al<k} with $2^n\sim 1$, we have 
\[
 \tx \sum_{j\geq 0} \| \mathfrak S_j^2 f\|_{L^p}\lesssim  \sum_{0\le j, 2j<k}  2^{j(-\frac{1}{2}+\frac{3}{2p}-\frac{1}{2q}+\epsilon)}2^{k(-\frac{1}{2}+\frac{1}{p})}\Vert f\Vert_{L^p}
\lesssim \| f\|_{L^p} 
\]
for  $(1/p,1/q)\in \operatorname{int}\mathfrak T$.    Thus, $\mathfrak{M}_{c}  f_{\ge 0}$  is  bounded from  $L^p$ to $L^q$  for $(1/p, 1/q)\in \operatorname{int}\mathfrak T$.

Next, we show $L^p$--$L^q$ bound on  $\mathfrak{M}_{c}  f_{\ge 0}$ for $(1/p, 1/q)\in \operatorname{int} \mathcal Q'$ where $\mathcal Q'$ is the quadrangle with  vertices 
$(1/4, 1/4),$  $(0,0),$ $P_1,$ and $P_2$ (see Figure \ref{fig1}). 
Note that $1/p+3/q<1$ if $(p,q)\in  \operatorname{int} \mathcal Q'$.  By the second estimate of \eqref{al<j<k<2j-al} with $2^n\sim 1$, we have  
\[ 
 \Vert \mathfrak{M}_{c}f_j^k\Vert_{L^q}\lesssim  
2^{j(1-\frac{1}{p}-\frac{4}{q})}2^{k(-1+\frac{2}{p}+\frac{1}{q}+\epsilon)}\Vert f\Vert_{L^p}, \quad {1}/{p}+{3}/{q}<1
\]
for  $0\leq j\leq k\leq 2j$. $\mathfrak{M}_{c} f_{j}^{<j}$ satisfies the same bound with $k=j$.  Thus, 
\[ 
 \tx \sum_{j\geq 0}  \| \mathfrak S_j^1 f \|_{L^p}\lesssim  \sum_{j\geq 0}   ( 2^{j(\frac{1}{p}-\frac{3}{q}+\epsilon)} +2^{j(\frac{3}{p}-\frac{2}{q}-1+2\epsilon)})   \Vert f\Vert_{L^p} \lesssim  \Vert f\Vert_{L^p}
\]
for $(1/p,1/q)\in \operatorname{int} \mathcal Q'$ since ${1}/{p}-{3}/{q}<0$ and ${3}/{p}-{2}/{q} <1$ for 
$(1/p,1/q)\in \operatorname{int} \mathcal Q'$.  
Similarly, the second estimate of \eqref{al<j,2j-al<k} with $2^n\sim 1$ gives
\[
\tx
\sum_{j\geq 0}   \| \mathfrak S_j^2 f \|_{L^p}\lesssim \sum_{k> 2j\ge 0}2^{j(\frac{1}{p}-\frac{2}{q}+\epsilon)}2^{k(-\frac{1}{2}+\frac{1}{p})}\Vert f\Vert_{L^p} \lesssim \sum_{j\geq 0}  2^{j(-1+\frac{3}{p}-\frac{2}{q}+\epsilon)}\Vert f\Vert_{L^p}
\]
for $(1/p,1/q)\in \operatorname{int} \mathcal Q'$.  Note that $-1+{3}/{p}-{2}/{q}<0$ for $(1/p, 1/q)\in \operatorname{int} \mathcal Q'$, so  it follows that  $\sum_{j\ge 0} \| \mathfrak S_j^2 f \|_{L^p}\lesssim  \Vert f\Vert_{L^p}$ for   $(1/p,1/q)\in \operatorname{int} \mathcal Q'$.
Thus, $f\to \mathfrak{M}_{c}  f_{\ge 0}$  is bounded from $L^p$ to $L^q$ for $(1/p, 1/q)\in \operatorname{int} \mathcal Q'$. 

Consequently,   $f\to \mathfrak{M}_{c}  f_{\ge 0}$ is bounded from $L^p$ to $L^q$  for $(1/p, 1/q)\in \operatorname{int} \mathfrak T \cup  \operatorname{int} \mathcal Q'$. Thus,  via interpolation  $f\to \mathfrak{M}_{c}  f_{\ge 0}$ is bounded  from $L^p$ to $L^q$ for $(1/p, 1/q)\in \operatorname{int} \mathcal Q$.  This complete the proof of Theorem \ref{main thm}.

\subsection{Proof of Theorem \ref{two para smoothing}} 
We set $\mathbb D_\tau=\R^3\times \mathbb J_\tau$. 
By $L^{p}_{\alpha, {x}}$ we denote the $L^p$ Sobolev space of order $\alpha$ in $x$, and set
$\cL^{p}_{\alpha}(\mathbb D_{\tau}) =L^p_{s,t}( \mathbb J_\tau;  L^{p}_{\alpha, x} (\mathbb R^3))$. 
We  prove  Theorem \ref{two para smoothing} making use of the next lemma.

\begin{prop}
\label{prop:2psmoothing}
Let $\tau\in (0,1]$ and $8\le p<\infty$. If  $ \alpha <4/p$, then we have
\[
 \Vert \tilde{\mathcal A}_t^{s}f\Vert_{\sobn \tau}\lesssim \tau^{-\frac{3}{p}}\Vert f\Vert_{L^p}. 
  \]
\end{prop}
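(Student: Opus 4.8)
The plan is to pass to a two-variable Littlewood--Paley decomposition of $f$, feed each dyadic piece into the appropriate estimate from Section~\ref{sec2}, and sum. Since $\psi$ is bounded it suffices to bound $\|\mathcal A_t^s f\|_{\sobn\tau}$ in place of $\|\tilde{\mathcal A}_t^s f\|_{\sobn\tau}$. Writing $f=\sum_{j,k}f_j^k$, the function $\mathcal A_t^s f_j^k$ has $x$-frequency support in $\mathbb A_{2^j}\times\mathbb I_{2^k}$, so $\langle D_x\rangle^\alpha$ acts on it as a Fourier multiplier of operator norm $\lesssim (1+2^{\max(j,k)})^\alpha$; together with the uniform bound $\|f_j^k\|_{L^p}\lesssim\|f\|_{L^p}$ and the triangle inequality $\|\mathcal A_t^s f\|_{\sobn\tau}\le\sum_{j,k}\|\mathcal A_t^s f_j^k\|_{\sobn\tau}$, it then suffices to show
\[
\sum_{j,k}c_{j,k}\lesssim\tau^{-3/p},\qquad c_{j,k}:=(1+2^{\max(j,k)})^{\alpha}\cdot\big(\text{the }L^p(\mathbb R^3\times\mathbb J_\tau)\text{-operator bound for }\mathcal A_t^s\text{ on frequencies }\mathbb A_{2^j}\times\mathbb I_{2^k}\big).
\]
Wherever the estimate in use is insensitive to the precise value of a frequency variable I would group the corresponding pieces into a single function (e.g.\ merge all $f_j^k$ with $|\bar\xi|\lesssim1$, or all with $|\bar\xi|,|\xi_3|\lesssim1/\tau$), which is harmless and prevents spurious divergence. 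Let $2^\sigma\sim1/\tau$; a small $\epsilon>0$ will be fixed at the end so that every exponent below has the indicated sign.

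I would then split according to the sizes of $\lambda=2^j$ and $h=2^k$ relative to one another and to $1/\tau$. If $|\bar\xi|,|\xi_3|\lesssim1/\tau$ the frequency support lies in a ball $B(0,C/\tau)$ and \eqref{st00} with $p=q$ gives the bound $\tau^{1/p}\|f\|_{L^p}$; with the Sobolev factor $\lesssim\tau^{-\alpha}$ this contributes $\lesssim\tau^{1/p-\alpha}\lesssim\tau^{-3/p}$, using $\alpha<4/p$. If $|\bar\xi|\lesssim1/\tau\lesssim|\xi_3|$, I would use \eqref{000st} when $|\bar\xi|\lesssim1$ and Proposition~\ref{case20} (estimate \eqref{0jnk local}) when $1\lesssim|\bar\xi|$; with $p=q$ both carry the factor $\tau^{1/p-1/2}h^{-1/2}$, and since $h\gtrsim1/\tau$ the geometric sum in $k$ of $2^{k(\alpha-1/2)}$ (note $\alpha<1/2$) yields $\tau^{1/2-\alpha}$, again leaving $\tau^{1/p-\alpha}\lesssim\tau^{-3/p}$. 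In the remaining range $\lambda\gtrsim1/\tau$ I would invoke Proposition~\ref{A1 estimate}: part $(b)$ (with $h=\lambda$) on $|\xi_3|\le|\bar\xi|$; part $(a)$, i.e.\ estimate \eqref{A1 ineq1}, on $|\bar\xi|\le|\xi_3|\lesssim\tau|\bar\xi|^2$; and part $(c)$, i.e.\ estimate \eqref{0jnk local}, on $|\xi_3|\gtrsim\tau|\bar\xi|^2$. The condition $1/p+3/q\le1$ required for these holds automatically since $p=q\ge8$.

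For the summation in the last three subregions: in $(b)$ one gets $c_{j,k}\lesssim\tau^{-1/p}2^{j(\alpha-4/p+\epsilon)}$, summable in $j\ge\sigma$ and over the $\lesssim j$ relevant $k$, giving $\lesssim\tau^{-1/p}\lesssim\tau^{-3/p}$. In $(a)$, \eqref{A1 ineq1} with $p=q$ produces the factor $\tau^{-1/p}\lambda^{1-6/p}h^{-1+2/p+\epsilon}$; since $p>6$ we have $1-6/p>0$, and since $\lambda\le h$ here we may bound $\lambda^{1-6/p}=2^{j(1-6/p)}$ by $2^{k(1-6/p)}$, whence $c_{j,k}\lesssim\tau^{-1/p}2^{k(\alpha-4/p+\epsilon)}$; as $\alpha<4/p$, summing over $k\ge\sigma$ (with $\lesssim k$ choices of $j$ for each $k$) gives $\lesssim\tau^{-1/p}\lesssim\tau^{-3/p}$. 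In $(c)$, \eqref{0jnk local} with $p=q$ gives the factor $\tau^{1/p-1/2}h^{-1/2}\lambda^{-2/p+\epsilon}$, so $c_{j,k}\lesssim\tau^{1/p-1/2}2^{k(\alpha-1/2)}2^{j(-2/p+\epsilon)}$; summing the geometric series in $k$ over $k>2j-\sigma$ (using $\alpha<1/2$) and then over $j\ge\sigma$ (using $2\alpha-1-2/p<6/p-1<0$) yields $\tau^{1/p-1/2}\cdot\tau^{1/2+2/p-\alpha-\epsilon}=\tau^{3/p-\alpha-\epsilon}\lesssim\tau^{-3/p}$, since $\alpha<6/p$. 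Adding the four contributions gives $\sum_{j,k}c_{j,k}\lesssim\tau^{-3/p}$ and hence the Proposition.

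The \emph{main obstacle} is to recover precisely the power $\tau^{-3/p}$. The individual estimates of Section~\ref{sec2} carry $\tau$-powers that can be strictly more singular than $\tau^{-3/p}$ — for instance $\tau^{1/p-1/2}$ in subregion $(c)$, which beats $\tau^{-3/p}$ as soon as $p>8$ — but each such factor is coupled to a negative power of $|\xi_3|$ (or of $|\bar\xi|$), and on the region where it is used $|\xi_3|\gtrsim1/\tau$; the geometric summation over the dyadic pieces straddling the critical scale $1/\tau$ converts this surplus decay into the missing positive power of $\tau$. Arranging for all of these geometric series to converge with the correct total exponent, with an $\epsilon$ of room to absorb the decoupling losses, is exactly what forces the hypotheses $p\ge8$ (so that $1-6/p>0$ and $4/p\le1/2$) and $\alpha<4/p$.
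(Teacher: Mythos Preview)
Your proposal is correct and follows essentially the same approach as the paper: a two-variable Littlewood--Paley decomposition, the same case split of the $(j,k)$ region according to the relative sizes of $2^j$, $2^k$, and $1/\tau$, and in each region the same estimate from Section~\ref{sec2} (namely \eqref{st00}, \eqref{000st}, \eqref{0jnk local}, and parts $(a)$, $(b)$, $(c)$ of Proposition~\ref{A1 estimate}), followed by geometric summation. The paper organizes the pieces slightly differently---writing the high-$\lambda$ contribution as the two sums $\mathrm I_t^s f$ and $\mathrm I\!\mathrm I_t^s f$ and grouping $f_j^{<j}$ from the outset---but the substance is identical.
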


It is not difficult to see that  the bound $\tau^{-{3}/{p}}$ is sharp up to a constant  by using a frequency localized smooth function. 
Assuming Proposition \eqref{prop:2psmoothing} for the moment, we prove Theorem \ref{two para smoothing}.

\begin{proof}[Proof of Theorem \ref{two para smoothing}]  
Since  $\psi\in C_c^\infty (\mathbb J_\ast)$, as before,  there are constants $c_0\in (0,1)$, and   $m_1, m_2>0$  such that 
$\supp\psi \subset \{(t,s): m_1 \le  s\le m_2, s<c_0t\}$.  By finite decomposition and scaling, we may assume 
$\supp\psi\subset  \{(t,s): 1 \le  s\le 2, s<c_0t\}$.  

We now consider the Fourier transform of the function
$(x,t,s)\to   \tilde{\mathcal A}_t^{s}f(x)$:  
\[      F(\zeta)  =   S(\zeta)  \widehat f(\xi)  := \iiiint  e^{-i(t\tau+s\sigma+ \Phi_t^s(\theta, \phi)\cdot\xi)} \psi(t,s) \,  d\theta d\phi ds dt\,   \widehat f(\xi),          \] 
where $\zeta=(\xi, \tau, \sigma)$. 
Let us set $m^\alpha(\zeta)=(1+|\zeta |^2)^{\alpha/2}$,  $ \varphi_{\circ}= \varphi_{<0}(|\cdot|)$, and $\tilde \varphi_{\circ}= 1-\varphi_{\circ}$. 
To prove Theorem \ref{two para smoothing}, we need to show  $\| \mathcal F^{-1} (m^\alpha F)\|_{L^p}\lesssim   \Vert f\Vert_{L^p}.$ 
Since $\| \mathcal F^{-1} (\varphi_{\circ} m^\alpha  F)\|_{L^p}\lesssim   \Vert f\Vert_{L^p},$ we only have  to show 
\[\| \mathcal F^{-1} ( \tilde\varphi_\circ m^\alpha  F)\|_{L^p}\lesssim   \Vert f\Vert_{L^p}.\]

For a large positive constant $C$, we set $\varphi_\ast(\zeta) =  \varphi_{<0}({|\tau|}/ {C|\xi|})$ and $\varphi^\ast(\zeta) =  \varphi_{<0}({|\sigma|}/{C|\xi|}).$ 
We also set $\tilde \varphi_\ast=1-\varphi_\ast$ and $\tilde \varphi^\ast= 1-\varphi^\ast$. 
Thus, we  have 
\[    \varphi_\ast  \varphi^\ast +  \tilde \varphi_\ast \varphi^\ast   +   \varphi_\ast  \tilde \varphi^\ast   +
 \tilde\varphi_\ast \tilde \varphi^\ast =1.       
 \] 
If $|\tau|\ge C|\xi|$, integration by parts in $t$ gives  $|S(\zeta)| \lesssim (1+|\tau|)^{-N}$ for any $N$. 
Since $|\tau|\ge C|\xi|$ and $|\sigma|\le C|\xi|$ on the support of $\tilde \varphi_\ast \varphi^\ast$, one can easily see 
$\| \mathcal F^{-1} ( \tilde \varphi_\ast \varphi^\ast \tilde\varphi^\circ m^\alpha  F)\|_{L^p}\lesssim   \Vert f\Vert_{L^p}$  for any $\alpha$. 
The same argument also shows  that $\| \mathcal F^{-1} (   \varphi_\ast  \tilde \varphi^\ast  \tilde\varphi^\circ m^\alpha  F)\|_{L^p}$, 
$ \| \mathcal F^{-1} (    \tilde\varphi_\ast \tilde \varphi^\ast  \tilde\varphi^\circ m^\alpha  F)\|_{L^p} \lesssim   \Vert f\Vert_{L^p}$  for any $\alpha$. 
Now, we note that $|\tau|\le C|\xi|$ and $|\sigma|\le C|\xi|$ on the support of $\varphi_\ast \varphi^\ast$. Thus, by the Mikhlin multiplier theorem 
\[   \| \mathcal F^{-1} ( \varphi_\ast \varphi^\ast  \tilde\varphi^\circ m^\alpha  F)\|_{L^p} \lesssim   \| \mathcal F^{-1} ( \bar m^\alpha   F)\|_{L^p},\]
where $\bar m^\alpha(\zeta)=(1+|\xi |^2)^{\alpha/2}.$  Since $\supp\psi\subset  \{(t,s): 1 \le  s\le 2, s<c_0t\}$, the right hand side is bounded above by 
$\Vert \tilde{\mathcal A}_t^{s}f\Vert_{\sobn 1}$. Therefore, using Proposition \ref{prop:2psmoothing}, we get  $\| \mathcal F^{-1} ( \varphi_\ast \varphi^\ast  \tilde\varphi^\circ m^\alpha  F)\|_{L^p}
\lesssim  \Vert f\Vert_{L^p}.$ \end{proof}

In what follows, we prove Proposition \ref{prop:2psmoothing} using the estimates obtained  in Section \ref{sec2.2}.

\begin{proof}[Proof of Proposition \ref{prop:2psmoothing}] 
Let $n$ be an integer such that $2^n\le 1/\tau< 2^{n+1}$. Then, we decompose 
\begin{align}\label{A_ts decomposition}
 \mathcal A_t^{s}f = \mathcal A_t^{s} f_{{<n}}^{{<n}}+  \sum_{k\geq n}\mathcal A_t^{s} f_{{<0}}^k +  \sum_{0\le j<n\le k}\mathcal A_t^{s} f_j^k +  {\mathrm I_{t}^s f}+   {\mathrm I\!\mathrm I_{t}^s f}, 
\end{align}
where 
\[   {\mathrm I_{t}^s f}=\sum_{j\geq n,\, k>2j-n}\mathcal A_t^{s} f_j^k, \qquad  \quad    {\mathrm I\!\mathrm I_{t}^s f}=\sum_{n\leq j\leq k\leq 2j-n}\mathcal A_t^{s}f_j^k +  \sum_{n\leq j }\mathcal A_t^{s}f_j^{<j}.\] 
Note that  $\Vert \mathcal A_t^{s} f_{<n}^{<n}\Vert_{L_x^{p, \alpha}}\lesssim \tau^{-\alpha} \Vert \mathcal A_t^{s}f\Vert_{L^{p}_x} $. So, 
$\Vert \mathcal A_t^{s} f_{<n}^{<n}\Vert_{\mathcal L^{p, \alpha}(\R^3\times \mathbb J_\tau)}\lesssim \tau^{-\alpha+1/p} \Vert  f\Vert_{L^{p}} \lesssim \tau^{-3/p} \Vert  f\Vert_{L^{p}}  $ since 
$\alpha<4/p$. 
Similarly, using \eqref{000st}, we  have $  \Vert \mathcal A_t^{s} f_{{<0}}^k\Vert_{\mathcal L^{p, \alpha} (\R^3\times \mathbb J_\tau)}\lesssim \tau^{1/p-1/2} 2^{(\alpha-1/2) k} 
\Vert f\Vert_{L^p} $
for $ k\geq n$. Taking sum over $k$ gives
\[ \tx  \Vert  \sum_{k\geq n} \mathcal A_t^{s} f_{{<0}}^k\Vert_{\mathcal L^{p, \alpha} (\R^3\times \mathbb J_\tau)}\lesssim  \sum_{k\geq n}  2^{(\alpha-\frac12) k} 
\tau^{\frac1p-\frac12} \Vert f\Vert_{L^p}\lesssim  \tau^{-3/p} \Vert  f\Vert_{L^{p}} \]
since $\alpha<4/p$ and $p>8$.  When $0\le j<n\le k$, by \eqref{0jnk local} it follows that  $\|\mathcal A_t^{s} f_j^k\Vert_{\mathcal L^{p, \alpha} (\R^3\times \mathbb J_\tau)}\lesssim  
\tau^{\frac1p-\frac12}  2^{j(-\frac{2}{p}+\epsilon)+k(\alpha-\frac{1}{2})}\|f\|_{L^p}$ for $p\ge 4$. 
Thus, we see that 
\[ \tx \| \sum_{0\le j<n\le k}\mathcal A_t^{s} f_j^k \|_{\mathcal L^{p, \alpha}(\R^3\times \mathbb J_\tau)}\lesssim  \tau^{\frac1p-\alpha}  \Vert  f\Vert_{L^{p}}\lesssim   \tau^{-\frac3p}  \Vert  f\Vert_{L^{p}}.  \]

Therefore, it remains to show the estimates for the operators $ {\rm I}_t^s$ and $ {\rm I\!I}_t^s$. Using $(c)$ and $(a)$ in Proposition \ref{A1 estimate}, we obtain,  respectively, 
\begin{align*}
\Vert \mathcal A_t^{s} f _j^k\Vert_{\mathcal L^{p, \alpha}(\R^3\times \mathbb J_\tau)}&\lesssim   \tau^{\frac1p-\frac12}    2^{j(-\frac{2}{p}+\epsilon)} 2^{ k(\alpha-\frac12)} \Vert f\Vert_{L^p}, \quad  && j\geq n, \, k> 2j-n, 
\\
\Vert \mathcal A_t^{s} f_j^k\Vert_{\mathcal L^{p, \alpha}(\R^3\times \mathbb J_\tau)}&\lesssim  \tau^{-\frac1p} 2^{j(1-\frac{6}{p})+k(\alpha+ \frac{2}{p}-1+\epsilon)}\Vert f\Vert_{L^p}, \quad &&n\leq j\leq k\leq 2j-n
\end{align*}
for any $\epsilon>0$ and $p\ge 4$. Besides,   $(b)$ in Proposition \ref{A1 estimate} (\eqref{A1 ineq1} with $h=\lambda$) gives $\Vert \mathcal A_t^{s}f_j^{<j}\Vert_{\mathcal L^{p, \alpha}(\R^3\times \mathbb J_\tau)}\lesssim \tau^{-1/p} 2^{j(\alpha-4/{p})} \Vert  f\Vert_{L^{p}}$ for $p\ge 4$.  Therefore,  recalling $p>8$ and $\alpha<4/p$,   we get  
\begin{align*}
&\tx \Vert \mathrm I_{t}^s f \Vert_{\mathcal L^{p, \alpha}(\R^3\times \mathbb J_\tau)} \lesssim   \tau^{\frac1p-\frac12} \sum_{j\geq n,\, k>2j-n}  2^{j(-\frac{2}{p}+\epsilon)} 2^{ k(\alpha-\frac12)}\Vert f\Vert_{L^p} \lesssim \tau^{-\frac3p} \Vert f\Vert_{L^p}, 
\\
&\tx  \Vert \mathrm I\!\mathrm I_{t}^s f \Vert_{\mathcal L^{p, \alpha}(\R^3\times \mathbb J_\tau)} \lesssim    \tau^{-\frac1p} \sum_{n\leq j\leq k\leq 2j-n} 2^{j(1-\frac{6}{p})+k(\alpha+ \frac{2}{p}-1+\epsilon)}\Vert f\Vert_{L^p}
\lesssim \tau^{-\frac3p} \Vert f\Vert_{L^p}.
\end{align*}
This completes the proof. 
\end{proof}

\section{One-parameter local smoothing and estimate with  fixed $t,s$ } \label{sec4}
In this section we prove Theorem \ref{smoothing} and \ref{fixed}.  

\subsection{One-parameter propagator}
In order to prove Theorem \ref{smoothing},  
we make use of  local smoothing estimate for the operator $f\to \mathcal{U}f({x},t,c_0t)$.
 For the two-parameter propagator $\mathcal{U}$, we can handle the associated operators $e^{it|\bar D|}$ and $e^{is|D|}$ separately
  so that  the sharp smoothing  estimates are obtained by utilizing  the decoupling and local smoothing inequalities for the cone in $\R^{2+1}$. 
  However, for the sharp estimate for  $f\to \mathcal{U}f({x},t,c_0t)$ a similar approach does not work. Instead, we make use of the decoupling inequality for the conic surface  $(\xi, |\bxi|+c_0|\xi|)$ in 
  $\R^{3+1}$. 
   (See \cite{BD} and Theorem 2.1 of \cite{BHS}). 
 
\begin{prop}\label{one para propagator} 
Set $  \tilde{\mathcal{U}}_\pm f(x,t)= \mathcal{U} f(x,t,\pm c_0t)$. 
Let $1\le \lambda\le  h\le \lambda^2$. Then, if  $6\le p\le \infty$,   for any $\epsilon>0$  we have
\Be 
\label{ut} \Vert \tilde{\mathcal{U}}_\pm f\Vert_{L^p_{{x},t}(\R^3\times[1,2])}\lesssim \lambda^{\frac{3}{2}-\frac{5}{p}} h^{\frac{2}{p}-\frac{1}{2}+\epsilon}\Vert f\Vert_{L^p} 
\Ee
 whenever $\supp \widehat f \subset \bA_\lambda\times \bI_h$. Also, the same bound with $h=\lambda$  holds   for $4\leq p\leq \infty$ whenever  $\supp \widehat f \subset \bA_\lambda\times \bI_\lambda^\circ$. 
\end{prop}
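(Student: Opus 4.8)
The plan is to transplant the scheme of the proof of Proposition~\ref{flocal}$(a)$ to the curved cone $(\xi,|\bar\xi|+c_0|\xi|)$. It suffices to treat $\tilde{\mathcal U}_+$, since $\tilde{\mathcal U}_-$ has phase $x\cdot\xi+t(|\bar\xi|-c_0|\xi|)$ of exactly the same homogeneous type and is handled by the identical argument (one may alternatively invoke the reflection reasoning of Remark~\ref{pm}). Writing
\[
\tilde{\mathcal U}_+ f(x,t)=\int e^{i(x\cdot\xi+t\Psi(\xi))}\widehat f(\xi)\,d\xi,\qquad \Psi(\xi)=|\bar\xi|+c_0|\xi|,
\]
the phase $\Psi$ is homogeneous of degree one, so the spacetime Fourier transform of $\tilde{\mathcal U}_+ f$ is supported on the cone $\mathcal C=\{(\xi,\Psi(\xi)):\xi\neq0\}\subset\R^{3+1}$. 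On $|\bar\xi|\gtrsim1$ the generating curve of $\mathcal C$ obeys the cone (cinematic) curvature condition, the relevant determinant being $\pm c_0\neq0$, so $\mathcal C$ enjoys a Bourgain--Demeter type decoupling inequality (\cite{BD}; see also \cite[Theorem 2.1]{BHS}).

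First I would carry out the angular decomposition as in Proposition~\ref{flocal}: write $f=\sum_{\nu\in\mathfrak V_\lambda}f_\nu$, $f_\nu=\omega_\nu(\bar D)f$, into $\sim\lambda^{1/2}$ pieces whose $\bar\xi$-frequencies lie in $\lambda^{-1/2}$-angular sectors. Applying the decoupling for $\mathcal C$ (with the remaining variables passive) together with H\"older's inequality, analogously to the reduction in \eqref{BD-} but with a loss of $\lambda^{\frac{1}{2}-\frac{1}{2p}-\frac{1}{2q}+\epsilon}$, reduces matters to the single-cap estimate
\[
\big\|\tilde{\mathcal U}_+ f_\nu\big\|_{L^q(\R^3\times[1,2])}\lesssim \lambda^{1-\frac{1}{2p}-\frac{7}{2q}}\, h^{\frac{2}{p}-\frac{1}{2}+\epsilon}\,\|f_\nu\|_{L^p},
\]
which, summed in $\ell^p$ over $\nu$ using $\big(\sum_\nu\|f_\nu\|_{L^p}^p\big)^{1/p}\lesssim\|f\|_{L^p}$, gives $\|\tilde{\mathcal U}_+f\|_{L^q(\R^3\times[1,2])}\lesssim\lambda^{\frac{3}{2}-\frac{1}{p}-\frac{4}{q}+\epsilon}h^{\frac{2}{p}-\frac{1}{2}+\epsilon}\|f\|_{L^p}$; taking $q=p$ yields \eqref{ut}. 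On the anisotropic slab $\bA_\lambda\times\bI_h$ with $\lambda\lesssim h\lesssim\lambda^2$ one only needs to decouple in the $\bar\xi$-angular variable; the weak curvature of $\mathcal C$ in the $\xi_3$-direction is left unresolved at this stage and absorbed into the two-dimensional estimate below, which is exactly why the hypothesis $h\lesssim\lambda^2$, matching that of Lemma~\ref{lem:locals}, occurs.

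For the single-cap estimate I would rotate so that $\nu=e_1$, replace $e^{it|\bar\xi|}$ by $e^{it\xi_1}$ up to a multiplier with uniformly bounded convolution kernel (the $\tilde\chi_\nu$ computation of Proposition~\ref{flocal}), and replace $c_0|\xi|$ by $c_0|(\xi_1,\xi_3)|$, since on the sector $c_0\big(|\xi|-|(\xi_1,\xi_3)|\big)=O(\xi_2^2/h)=O(1)$, again at the cost of a bounded-kernel multiplier (Lemma~\ref{fff}). This turns $\tilde{\mathcal U}_+ f_\nu$ into $\int e^{i(x\cdot\xi+t\xi_1+c_0 t|(\xi_1,\xi_3)|)}\widehat{f_\nu}(\xi)\,d\xi$, a tilted two-dimensional cone propagator acting in $(x_1,x_3,t)$, with the $\xi_2$-direction (of frequency-extent $\lesssim\lambda^{1/2}$) passive; the linear change of variables $(x_1,x_3,t)\mapsto(x_1+t,x_3,c_0t)$ identifies it with the standard operator $\mathcal W_+$ on frequencies $(\xi_1,\xi_3)\in\bI_\lambda\times\bI_h$ over $\R^2\times[c_0,2c_0]$. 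Bernstein's inequality in $x_2$, Minkowski's inequality, and Lemma~\ref{lem:locals} (applicable because $\lambda\lesssim h\lesssim\lambda^2$) then give the displayed single-cap bound. For the statement with $h=\lambda$ I would decompose $\bI_\lambda^\circ$ dyadically in $\xi_3$ and run the same argument on each piece, using Theorem~\ref{prop:locals} (or Lemma~\ref{lem:locals}) for the two-dimensional propagator; since that estimate is available for $2\le p\le q$ with $1/p+3/q\le1$, the range of validity broadens to $p\ge4$.

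The step I expect to be the main obstacle is the decoupling: one must verify that the genuinely curved cone $(\xi,|\bar\xi|+c_0|\xi|)$ --- which is not a light cone --- admits decoupling into the $\lambda^{-1/2}$-angular pieces with the loss stated above, and, more delicately, that on the anisotropic slab it suffices to decouple in $\bar\xi$ alone, the residual curvature in $\xi_3$ being exactly of the size captured by the two-dimensional local smoothing estimates of Section~\ref{sec2}. The remaining manipulations parallel the proof of Proposition~\ref{flocal}.
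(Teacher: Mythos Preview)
Your outline is essentially correct and would yield the proposition, but it differs from the paper's argument in two respects worth noting.

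First, your ``main obstacle'' --- decoupling for the $3{+}1$-dimensional cone $\{(\xi,|\bar\xi|+c_0|\xi|)\}$ into $\bar\xi$-angular sectors on the anisotropic slab --- is not actually needed. Since the $\bar\xi$-Fourier supports of the $\tilde{\mathcal U}_+f_\nu$ lie in essentially disjoint $\lambda\times\lambda^{1/2}$ rectangles, the elementary almost-orthogonality inequality (Plancherel plus interpolation, as in \cite[Lemma~6.1]{TVV}) already gives
\[
\|\tilde{\mathcal U}_+f\|_{L^p}\lesssim\lambda^{\frac12-\frac1p}\Big(\sum_\nu\|\tilde{\mathcal U}_+f_\nu\|_{L^p}^p\Big)^{1/p},
\]
which is exactly your claimed loss at $p=q$. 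The paper uses this (see \eqref{after tvv}) and then, for each cap, reduces to the two-dimensional propagator $\tilde{\mathcal U}_\pm^\nu$ and applies the $\ell^2$ Bourgain--Demeter decoupling for the $2{+}1$ cone at the endpoint $q=6$ directly, rather than the packaged local smoothing of Lemma~\ref{lem:locals}. Your route through Lemma~\ref{lem:locals} (hence through the Guth--Wang--Zhang estimate) is a legitimate alternative and in fact delivers the anisotropic bound already for $p\ge4$, slightly more than the paper's $p\ge6$; the paper only needs $p\ge6$ for Proposition~\ref{Bt case2}.

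Second, for the isotropic case $\supp\widehat f\subset\bA_\lambda\times\bI_\lambda^\circ$ the paper does \emph{not} decompose dyadically in $\xi_3$. Instead it computes the Hessian of $P_\pm(\xi)=|\bar\xi|\pm c_0|\xi|$, verifies it has rank two with eigenvalues $|\bar\xi|^{-1}\pm c_0|\xi|^{-1}$ and $\pm c_0|\xi|^{-1}$, and invokes the $\ell^p$ decoupling inequality for conic hypersurfaces in $\R^{3+1}$ with two nonvanishing principal curvatures (inequality \eqref{decouple-p}), decoupling directly into $\lambda^{-1/2}$-caps on $\mathbb S^2$; each resulting piece is then bounded trivially after linearizing the phase. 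This is more direct than your proposed reduction and is the one place where the paper genuinely appeals to decoupling for a $3{+}1$-dimensional cone. Your dyadic-in-$\xi_3$ argument would also work (the pieces with $|\xi_3|\sim h'\le\lambda$ reduce to Theorem~\ref{prop:locals} rather than Lemma~\ref{lem:locals}), but it requires rerunning the single-cap analysis in a regime not covered by the anisotropic case as you stated it.
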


\begin{proof}
When $p=\infty$, the estimate \eqref{ut} is already shown  in the previous section  (see \eqref{plocal1}).  Thus, we focus on the estimates \eqref{ut}  for $p=4, 6$, and the other estimates follow by 
interpolation.

 We first consider the case $\supp \widehat f \subset \bA_\lambda\times \bI_\lambda^\circ$, for which \eqref{ut} hold on a larger range   $4\leq p\leq \infty$.  To show \eqref{ut},  we make use of the decoupling inequality associated to the conic surfaces 
 \[  \Gamma_\pm =\{  (\xi, P_\pm (\xi)),\quad \xi \in \mathbb A_1\times \mathbb I_1^\circ\} \]  
 where $P_\pm(\xi):=|\bxi|\pm c_0|{\xi}|$.  In fact, we use the $\ell^p$ decoupling inequality for the  conic surfaces \cite{BD, BHS}.  To this end, we  first check that the Hessian matrix of $P_\pm$ is of rank $2$. Indeed, a computation shows that
\[ 
\operatorname{Hess} P_\pm(\xi)
= \frac{1}{|\bar\xi|^3} \begin{pmatrix}
 \xi_2^2 & - {\xi_1\xi_2} & 0\\
- {\xi_1\xi_2} &  {\xi_1^2} & 0\\
0 & 0 & 0
\end{pmatrix}\pm \frac{c_0}{|{\xi}|^3}
\begin{pmatrix}
 {\xi_2^2+\xi_3^2} & - {\xi_1\xi_2} & - {\xi_1\xi_3}\\
- {\xi_1\xi_2} &  {\xi_1^2+\xi_3^2}& - {\xi_2\xi_3}\\
- {\xi_1\xi_3} & - {\xi_2\xi_3} &  {\xi_1^2+\xi_2^2}
\end{pmatrix}. \]
Note that  $\operatorname{Hess} P_\pm(\xi) \xi=0$, so $\Gamma$ has a vanishing principal   curvature in the direction of $\xi$. 
By rotational symmetry in $\bxi$, to compute the eigenvalues of $\operatorname{Hess} P_\pm(\xi)$  it is sufficient to consider the case $\xi_1=0$ and $\xi_2=|\bxi|\neq 0$. Consequently, 
one can easily see that the matrix $\operatorname{Hess} P_\pm(\xi)$  has two nonzero   eigenvalues 
\[{|\bxi|^{-1}}\pm {c_0} |\xi|^{-1},  \quad  \pm {c_0}|\xi|^{-1}.\] 

Let us denote by $\mathfrak{V}^\lambda$ a collection of points which are maximally $\sim \lambda^{-1/2}$  separated in the set $\mathbb S^2\cap \{\xi: |\bxi|\ge 2^{-2} \xi_3\} $.
Let $\{ W_\mu\}_{\mu\in  \mathfrak{V}^\lambda}$ denote a partition of unity subordinated to   a collection of finitely overlapping spherical caps centered at $\mu$  of diameter $\sim \lambda^{-1/2}$ 
which cover $\mathbb S^2\cap \{\xi: |\bxi|\ge 2^{-2} \xi_3\} $ such  that  $|\partial^\alpha  W_\mu|\lesssim \lambda^{|\alpha|/2}$. Denote $\Omega_\mu(\xi)=W_\mu(\xi/|\xi|)$. 
Since $\supp \widehat f \subset \bA_\lambda\times \bI_\lambda^\circ$, we have $f=\sum_{\mu\in  \mathfrak{V}^\lambda} f_\mu$ where 
$f_\mu=\mathcal F^{-1}  ( \Omega_\mu \widehat f \, ). $ So, 
we can write 
\[ \tilde{\mathcal{U}}_\pm f (x,t) =\sum_{\mu\in \mathfrak{V}^\lambda} \tilde{\mathcal{U}}_\pm f_\mu(x,t)=\sum_{\mu\in \mathfrak{V}^\lambda}\int e^{i({x}\cdot {\xi}+tP_\pm (\xi))}  \widehat{f_\mu}({\xi})d{\xi}.
\]
Since $\Gamma_\pm $ are conic surfaces with two nonvanishing curvatures in $\mathbb R^4$, we have the following $l^p$-decoupling inequality: 
\begin{align}
\label{decouple-p}
\Vert  \tilde \chi(t)  \tilde{\mathcal{U}}_\pm f\Vert_{L^p_{x,t}} 
\lesssim \lambda^{1-\frac3p+\epsilon} \Big( \sum_{\mu\in \mathfrak{V}^\lambda}\Vert  \tilde \chi(t)  \tilde{\mathcal{U}}_\pm f_\mu\Vert_{L^p_{x,t}}^p\Big)^{1/{p}}
\end{align}
for $p\ge 4$. (See \cite{BD1} and \cite[Theorem 1.4]{BHS}.) Here $\tilde\chi \in \mathcal S(\mathbb R)$ such that $\tilde \chi\ge 1$ on $\bI$ and $\supp \mathcal F (\tilde\chi)\subset [-1/2,1/2]$.  Using  Lemma \ref{fff} as before, we see $\Vert   \tilde \chi(t)  \tilde{\mathcal{U}}_\pm  f_\mu \Vert_{L^p_{x,t}}\lesssim  \|  \tilde \chi(t)  e^{t (\bar D\cdot (\bar \mu/|\bar\mu|)\pm c_0D\cdot\mu)} f_\mu\Vert_{L^p_{x,t}}$ where $\mu=(\bar\mu, \mu_3)$. Thus, a change of variables gives
$ \Vert  \tilde \chi(t)  \tilde{\mathcal{U}}_\pm f_\mu\Vert_{L^p_{x,t}}\lesssim \Vert f_\mu\Vert_{L^p}$  for $1\le p\le \infty$. 
Since $(\sum_\mu \|f_\mu\|_p^p)\lesssim \|f\|_p$ for $p\ge 2$, combining the  estimates and \eqref{decouple-p} with $p=4$, 
 we obtain 
\[ \Vert \mathcal{U}_\pm f  \Vert_{L^4_{x,t}}  \lesssim \lambda^{\frac1{4}+\epsilon}\Vert f\Vert_{L^4}.\]
  Interpolation with the easy $L^\infty$ estimate (\eqref{plocal1} with $p=q=\infty$) gives \eqref{ut} with $h=\lambda$ for $4\le p\le \infty$.

Now, we consider the case $\supp \widehat f \subset \bA_\lambda\times \bI_h$ with $\lambda\le h\le \lambda^2$. Recall the partition of unity $\{ w_\nu\}_{\nu\in \mathfrak{V}_\lambda}$  on the unit circle  $\mathbb S^1$ and $f_\nu=\omega_\nu(\bar D)  f$.  Note that $\tilde{\mathcal U}_\pm f_\nu(\cdot, x_3, t)$, $\nu\in \mathfrak{V}_\lambda$  have Fourier supports contained in  finitely overlapping rectangles of dimension $\lambda\times \lambda^{1/2}$. So, we have 
\[\tx\| \sum_{\nu\in \mathfrak{V}_\lambda} \tilde{\mathcal U}_\pm f_\nu(\cdot, x_3, t)\|_p \lesssim  \lambda^{1/2-1/p} (\sum_{\nu\in \mathfrak{V}_\lambda} \| \tilde{\mathcal U}_\pm f_\nu(\cdot, x_3, t)\|_p^p)^{1/p}\] for $2\le p\le \infty,$ which is a simple consequence of the Plancherel theorem and interpolation (for example, see Lemma 6.1 in \cite{TVV}). Integration in $x_3$ and $t$ gives 
\begin{equation}\label{after tvv}
\Vert  \tilde{\mathcal U}_\pm  f \Vert_{L^p_{{x},t}(\mathbb R^3\times \bI)}\lesssim \lambda^{\frac{1}{2}-\frac{1}{p}}\Big(\sum_{\nu\in \mathfrak{V}_\lambda} \| \tilde{\mathcal U}_\pm f_\nu\Vert_{L^p_{{x},t}(\mathbb R^3\times \bI)}^p\Big)^{1/{p}}, \quad 2\le p\le \infty.
\end{equation}
We proceed to obtain estimates for $ \| \tilde{\mathcal U}_\pm f_\nu\Vert_{L^p_{{x},t}(\mathbb R^3\times \bI)}$. Using Lemma \ref{fff} and changing variables $x\to x-(\nu,0)t$, we see  
$ \| \tilde{\mathcal U}_\pm f_\nu\Vert_{L^p_{{x},t}(\mathbb R^3\times \bI)}\lesssim  \| e^{\pm itc_0 |D|} f_\nu\Vert_{L^p_{{x},t}(\mathbb R^3\times \bI)}$. 
Similarly, we also have 
$ \| e^{\pm itc_0 |D|} f_\nu\Vert_{L^p_{{x},t}(\mathbb R^3\times \bI)}\lesssim \| \tilde{\mathcal U}^\nu_\pm f_\nu\Vert_{L^p_{{x},t}(\mathbb R^3\times \bI)} $, where 
\[   \tilde{\mathcal U}_\pm^\nu h ({x},t)= \int e^{i\big({x}\cdot{\xi} 
\pm c_0t\sqrt{(\nu\cdot\bxi)^2+\xi_3^2})\big)}\widehat{h}({\xi})d{\xi}. \]
Therefore,  from \eqref{after tvv} it follows that 
\begin{equation}\label{after tvv2}
\Vert  \tilde{\mathcal U}_\pm f \Vert_{L^p_{{x},t}(\mathbb R^3\times \bI)}\lesssim \lambda^{\frac{1}{2}-\frac{1}{p}}\Big(\sum_{\nu\in \mathfrak{V}_\lambda} \| \tilde{\mathcal U}_\pm^\nu f_\nu\Vert_{L^p_{{x},t}(\mathbb R^3\times \bI)}^p\Big)^{1/{p}}, \quad 2\le p\le \infty. 
\end{equation}

Note that  Fourier transform of $f$ is contained in $\{\xi :  |\xi|\sim h \}$ because $\lambda\le h$. To estimate $ \tilde{\mathcal U}_\pm^\nu f_\nu $, freezing $\nu^\perp\cdot \bx$,  we use  the $\ell^2$ decoupling inequality \cite{BD} (i.e., \eqref{BD-} with $p=2$, $q=6$, and $\lambda=h$) with respect to $\nu\cdot\bx,x_3$ variables.  Thus,  by the decoupling inequality followed by  Minkowski's inequality, we get 
\[  \|  \tilde{\mathcal U}_\pm^\nu f_\nu\Vert_{L^6_{{x},t}(\mathbb R^3\times \bI)}\lesssim h^{\epsilon} \Big( \sum_{\tilde\nu\in\mathfrak{V}_h }\Vert   \tilde\chi(t)  \tilde{\mathcal U}_\pm^\nu f_{\nu}^{\tilde \nu} \Vert_{L^6_{x,t}}^2 \Big)^{1/2},
 \]
where $\mathcal F(f_{\nu}^{\tilde \nu})(\xi)=\omega_{\tilde \nu}(\nu\cdot\bxi, \xi_3)\widehat{f_\nu}(\xi)$.   Since  $\#\{\tilde\nu: f_{\nu}^{\tilde \nu}\neq 0\}\lesssim \lambda h^{-1/2}$, by H\"older's inequality  it follows that 
\[ \|  \tilde{\mathcal U}_\pm^\nu f_\nu\Vert_{L^6_{{x},t}(\mathbb R^3\times \bI)}\lesssim  h^{\epsilon}  (\lambda h^{-1/2})^{\frac13} \Big( \sum_{\tilde\nu\in\mathfrak{V}_h }\Vert \tilde\chi(t)  \tilde{\mathcal U}_\pm^\nu f_{\nu}^{\tilde \nu} \Vert_{L^6_{x,t}}^6 \Big)^{1/6}. 
 \]
Lemma \ref{fff} and a similar argument as before yield $\Vert \tilde\chi(t) \tilde{\mathcal U}  f_{\nu}^{\tilde \nu} \Vert_{L^6_{x,t}} \lesssim \|f_{\nu}^{\tilde \nu} \|_6$. 
 Hence, $\|  \tilde{\mathcal U}_\pm^\nu f_\nu\Vert_{L^p_{{x},t}(\mathbb R^3\times \bI)}^6 $ $\lesssim  \lambda^{2} h^{-1+6\epsilon} 
 \sum_{\tilde\nu\in\mathfrak{V}_h } \| f_{\nu}^{\tilde \nu} \Vert_{L^6_{x,t}}^6 \lesssim   \lambda^{2} h^{-1+6\epsilon} \|f_\nu\|_{L^6}^6$. Therefore, combining this and \eqref{after tvv2} with $p=6$, 
we obtain  \eqref{ut} for $p=6$.  
\end{proof}

\subsection{Proof of Theorem \ref{smoothing}}
We denote  $\mathcal L^{p}_{\alpha}(\mathbb R^3 \times \mathbb I) =L^p_{t}( \mathbb{I}; L^{p}_{\alpha, x} (\mathbb R^3))$. 
By an argument similar to the proof of Theorem \ref{two para smoothing} it is sufficient to show that 
\[  \| \tilde \cA_t^{c_0t} f\|_{\mathcal L^p_\alpha(\mathbb R^3\times \bI)} \lesssim \|f\|_{L^p(\mathbb R^3)}, \quad \alpha<3/p \]
for a constant $c_0\in (0,1)$. 
We use the decomposition \eqref{A_ts decomposition} with $s=c_0t$ and $n=0$ to have 
\[
\tx \mathcal A_t^{c_0t}f = \mathcal A_t^{c_0t} f_{{<0}}^{{<0}}+  \sum_{k\geq 0}\mathcal A_t^{c_0t} f_{{<0}}^k 
+ \mathrm I_{t}^{c_0t} f+   \mathrm I\!\mathrm I_{t}^{c_0t} f. 
\]

The estimates for $\mathcal A_t^{c_0t} f_{{<0}}^{{<0}}$  and $\sum_{k\geq 0} \mathcal A_t^{c_0t} f_{{<0}}^k$ follow from  \eqref{00} and \eqref{000} for fixed $t,s$. Indeed, we have $\|\mathcal A_t^{c_0t} f_{{<0}}^{{<0}}\Vert_{\mathcal L^{p, 3/p} (\R^3\times \mathbb I)}\lesssim \|f\|_p $ and 
\[ \tx \sum_{k\ge 0}  \| \mathcal A_t^{c_0t} f_{{<0}}^{{k}} \Vert_{\mathcal L^{p, 3/p} (\R^3\times \mathbb I)} \lesssim \sum_{k\ge 0}  2^{(3/p-1/2)k} \|f\|_p\lesssim  \|f\|_p  \] 
for $p>6$.  

We obtain the estimates for $ \mathrm I_{t}^{c_0t}$ and $\mathrm I\!\mathrm I_{t}^{c_0t}$ using the next proposition. 

\begin{prop}\label{Bt case2}
$(a)$ If $1\le \lambda\le h\le \lambda^2$,  then for any $\epsilon>0$  we have 
\begin{equation}
\label{Bb}
\Vert \cA_t^{c_0t} f \Vert_{L^p_{{x},t}(\mathbb R^3\times \bI)}\lesssim \lambda^{1-\frac{5}{p}} h^{-1+\frac{2}{p}+\epsilon}\Vert f\Vert_{L^p}
\end{equation}
for $6\le p\le \infty$ whenever $\supp \widehat f \subset\bA_\lambda \times \bI_h$.  $(b)$
If $\supp \widehat f \subset\bA_\lambda \times \bI_\lambda^\circ$, the estimate \eqref{Bb} holds with $h=\lambda$ for $4\le p\le \infty$.  $(c)$ If $1\le \lambda$ and $\lambda^2\le h$,   we have
\[
\Vert \cA_t^{c_0t} f\Vert_{L^p_{{x},t}(\mathbb R^3\times \bI)}\lesssim \lambda^{-\frac{2}{p}+\epsilon} h^{-\frac{1}{2}}\Vert f\Vert_{L^p}
\]
for $4\leq p\leq \infty$ whenever $\supp \widehat f \subset\bA_\lambda \times \bI_h$.
\end{prop}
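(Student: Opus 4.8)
The plan is to prove Proposition \ref{Bt case2} by transferring the $L^p$ bounds on the one-parameter propagator $\tilde{\mathcal U}_\pm$ from Proposition \ref{one para propagator} to the averaging operator $\cA_t^{c_0t}$, exactly mirroring the way Proposition \ref{A1 estimate} was deduced from Proposition \ref{flocal}. First I would invoke the asymptotic expansion \eqref{exp} together with \eqref{bessel analog}: setting $s=c_0 t$, one writes $\widehat{d\sigma_t^{c_0t}}(\xi)=\sum_{\pm,\,0\le l\le N} M_l^\pm(\xi,t,c_0t)+\mathcal E(\xi,t,c_0t)$, where $M_l^\pm(\xi,t,c_0t)=C_l|\bxi|^{-l-1/2}e^{\pm it|\bxi|}m_l^\pm(\xi,t,c_0t)$ and $m_l^\pm$ itself has an expansion of the form $\sum B_j^\pm |c_0t\,\xi|^{-1/2-j}e^{\pm i|c_0t\xi|}+\tilde E_N^\pm(c_0t|\xi|)$ with coefficients that are smooth and bounded in $t$ on $\mathbb I$ (since $s=c_0t$ keeps $(t,s)$ in a compact subset of $\mathbb J_0$). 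By \eqref{error} the contribution of $\mathcal E$ is negligible (a gain of $\lambda^{2-N'}$), so as in the proof of Proposition \ref{A1 estimate} the matter reduces to estimating the ``central'' operators $\mathcal C_\pm^\kappa f(x,t):=|\bar D|^{-1/2}|c_0tD|^{-1/2}\mathcal Uf(x,\kappa t,\pm c_0t)$, $\kappa=\pm$, which carry the main contribution, plus the error term $\tilde E_N^\pm(c_0t|D|)$, which is handled by the Mikhlin multiplier theorem and Bernstein's inequality just as in the lemma preceding the proof of Proposition \ref{A1 estimate}.

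Next I would carry out the three cases. In case $(a)$, $1\le\lambda\le h\le\lambda^2$: on $\supp\widehat f\subset\bA_\lambda\times\bI_h$ we have $|\bxi|\sim\lambda$, $|\xi_3|\sim h$, and $|\xi|\sim h$, so the symbol $|\bar D|^{-1/2}|c_0tD|^{-1/2}$ produces a factor $\sim\lambda^{-1/2}h^{-1/2}$ uniformly for $t\in\mathbb I$ (after absorbing the smooth $t$-dependence via Lemma \ref{fff}). Since $\mathcal Uf(x,\kappa t,\pm c_0t)=\tilde{\mathcal U}_\pm f(x,\kappa t)$ up to the reflections covered by Remark \ref{pm}, Proposition \ref{one para propagator} gives $\|\tilde{\mathcal U}_\pm f\|_{L^p_{x,t}(\R^3\times\mathbb I)}\lesssim \lambda^{3/2-5/p}h^{2/p-1/2+\epsilon}\|f\|_p$ for $6\le p\le\infty$. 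Multiplying by $\lambda^{-1/2}h^{-1/2}$ yields $\lambda^{1-5/p}h^{-1+2/p+\epsilon}\|f\|_p$, which is \eqref{Bb}. Case $(b)$, $\supp\widehat f\subset\bA_\lambda\times\bI_\lambda^\circ$: now $|\xi|\sim\lambda$, the symbol gives $\lambda^{-1}$, and the $h=\lambda$ version of Proposition \ref{one para propagator} holds for $4\le p\le\infty$, giving the estimate with $h=\lambda$ on that larger range. Case $(c)$, $\lambda^2\le h$: here, as in part $(c)$ of Proposition \ref{flocal}, the phase $e^{\pm i(t|\bxi|+c_0t|\xi|)}$ can be replaced by $e^{\pm i(t|\bxi|+c_0t|\xi_3|)}$ because $\partial_\xi^\alpha\big(c_0t(|\xi|-|\xi_3|)\big)=O(1)$ on $\bA_\lambda\times\bI_h$ when $\lambda^2\lesssim h$; after this reduction $\mathcal Uf(x,t,c_0t)$ factors through $e^{it(1+c_0)|\xi_3|}$ times a bounded multiplier in the $\xi_3$-frozen variable times $e^{it|\bar D|}$, so one applies Proposition \ref{prop:locals} (the sharp $2$-d cone local smoothing) in $(\bx,t)$ together with Bernstein in $x_3$ to obtain $\|\cA_t^{c_0t}f\|_{L^p_{x,t}(\R^3\times\mathbb I)}\lesssim \lambda^{-2/p+\epsilon}h^{-1/2}\|f\|_p$ for $4\le p\le\infty$; the gain $\lambda^{-l}$ from $|\bxi|^{-l-1/2}$ and $h^{-1/2}$ from $|s\xi_3|^{-1/2}$ together beat the growth in the local smoothing bound.

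I expect the main obstacle to be bookkeeping rather than any new idea: one must check that all the coefficient functions $B_j^\pm$, $\tilde E_N^\pm$, $C_l$ appearing in the double expansion \eqref{exp}--\eqref{bessel analog} depend smoothly and boundedly on $t$ along the line $s=c_0t$ (true because $t+c_0t\cos\theta\gtrsim (1-c_0)t\gtrsim 1-c_0$ for $t\in\mathbb I$), so that Lemma \ref{fff} and the Mikhlin multiplier theorem apply uniformly, and that the various Bernstein and Minkowski reductions do not lose powers of $\lambda$ or $h$ beyond what is claimed. A secondary point requiring care is verifying in case $(c)$ that the symbol $e^{\pm ic_0t(|\xi|-|\xi_3|)}\varphi(\bxi/\lambda)\varphi(\xi_3/h)$ has uniformly bounded inverse Fourier transform when $\lambda^2\lesssim h$; this is the direct analogue of the corresponding step in Proposition \ref{flocal}$(c)$ and goes through the same way. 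Once these routine verifications are in place, the three desired estimates follow by direct substitution of the bounds from Proposition \ref{one para propagator} and Proposition \ref{prop:locals}.
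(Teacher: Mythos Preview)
Your proposal is correct and follows essentially the same route as the paper: reduce via the double asymptotic expansion \eqref{exp}--\eqref{bessel analog} to the central operators $\tilde{\mathcal C}_\pm^\kappa f=|\bar D|^{-1/2}|c_0tD|^{-1/2}e^{i(\kappa t|\bar D|\pm c_0t|D|)}f$, then invoke Proposition \ref{one para propagator} for $(a)$ and $(b)$, and in $(c)$ replace $|\xi|$ by $|\xi_3|$ (Lemma \ref{fff}) and apply the 2-d local smoothing estimate \eqref{locals-est}. One small slip: in case $(c)$ the reduced phase is $e^{i(\kappa t|\bxi|\pm c_0t|\xi_3|)}$, not $e^{it(1+c_0)|\xi_3|}$; the factor $e^{\pm ic_0t|\xi_3|}$ is removed by a change of variables $x_3\to x_3\mp c_0t$ (no Bernstein in $x_3$ is needed since you are on the diagonal $p=q$), after which \eqref{locals-est} applied in $(\bar x,t)$ gives exactly $\lambda^{-2/p+\epsilon}h^{-1/2}$.
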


Assuming this for the moment, we finish the proof of   Theorem \ref{smoothing}.  
By $(a)$ and $(b)$ in Proposition \ref{Bt case2}  we have  
\[\tx  \| \mathrm I\!\mathrm I_{t}^{c_0t} f \|_{\mathcal L^p_\alpha(\mathbb R^3\times \bI)} 
\lesssim  \sum_{j\ge 0} 2^{(1-\frac{5}{p})j} \sum_{ j\leq k\leq 2j}  2^{k (-1+ \frac2p +\alpha+\epsilon)} \Vert f\Vert_{L^p}.\]
Since $p>6$ and $\alpha<3/p$, taking $\epsilon>0$ small enough, we have the right hand side bounded above by $C\|f\Vert_{L^p}$. 
Finally, using $(c)$ in Proposition \ref{Bt case2}  we obtain 
\[ \tx  \| \mathrm I_{t}^{c_0t} f \|_{\mathcal L^p_\alpha(\mathbb R^3\times \bI)} \lesssim \sum_{j\geq 0}\sum_{k\geq 2j}2^{j(-\frac{2}{p}+\epsilon)+k(-\frac{1}{2}+\alpha)}\Vert f\Vert_{L^p} \lesssim \Vert f\Vert_{L^p}\]
for $p>6$ and $\alpha<3/p$.

To complete the proof, it remains to prove Proposition \ref{Bt case2}. For the purpose  
we closely follow the proof of Proposition \ref{A1 estimate}.

\begin{proof}[Proof of Proposition \ref{Bt case2}] 
We recall \eqref{exp}, \eqref{ml}, and \eqref{ce}. 
As seen in the proof of Proposition \ref{A1 estimate}, using the Mikhlin multiplier theorem, 
we can handle $\mathcal E (\xi,t,c_0t)$  as if it is  $|\bxi|^{-N'} |\xi_3|^{-1}$ (see \eqref{bessel2}). 
Likewise, we can replace $\tilde E_N(c_0t|{\xi}|)$ by $(c_0t|{\xi}|)^{-N'}$.  
Thus, the matter is reduced to obtaining estimates for  the operators 
\[\tilde{\mathcal{C}}_\pm^\kappa  f(x,t)  :=|\bar D|^{-\frac12}|sD|^{-\frac12} e^{i(\kappa t|\bar D|\pm c_0t| D| )} f(x), \quad \kappa = \pm\]
(cf. \eqref{central}). 
 Thus, it is sufficient to show that the desired bounds on $\cA_t^{c_0t}$  also hold on $\tilde{\mathcal{C}}_\pm^\kappa$. 

We first consider the case $(a)$. Note  $\|\tilde{\mathcal{C}}_\pm^\kappa f \Vert_{L^p_{{x}}(\mathbb R^3)}\lesssim  (\lambda h) ^{-1/2} \Vert e^{i(\kappa t|\bar D|\pm c_0t| D| )} f\Vert_{L^p_{{x}}(\mathbb R^3)}$  since $\supp \widehat f \subset\bA_\lambda \times \bI_h$. By Proposition \ref{one para propagator} 
we get  
\[ \|\tilde{\mathcal{C}}_\pm^\kappa f \Vert_{L^p_{{x},t}(\mathbb R^3\times \bI)}\lesssim  \lambda^{1-\frac{5}{p}} h^{\frac{2}{p}-1+\epsilon}\Vert f\Vert_{L^p}, \quad \kappa = \pm\]
for $6\le p\le \infty$ as desired. In fact, the estimates for $e^{i(- t|\bar D|\pm c_0t| D| )} f$ follow by conjugation and reflection as before (cf. Remark \ref{pm}). 
Also,  note that $\|\tilde{\mathcal{C}}_\pm^\kappa f \Vert_{L^p_{{x}}}\lesssim \lambda^{-2} \Vert e^{i(- t|\bar D|\pm c_0t| D| )}\Vert_{L^p_{{x}}}$  when $\supp\widehat f \subset\bA_\lambda \times \bI_\lambda^\circ$. Thus, we get  the estimate in the case $(b)$  in the same manner.

Finally, we consider the case  $(c)$. Since  $\supp \widehat f \subset\bA_\lambda \times \bI_h$ and  $\lambda^2\le h$, applying  Mikhlin's multiplier theorem and Lemma \ref{fff} successively,  we see $\|\tilde{\mathcal{C}}_\pm^\kappa f\|_{L_x^p}\lesssim (\lambda h) ^{-1/2} \|e^{i(\kappa t|\bar D|\pm c_0t| D| )} f\|_{L_x^p} \lesssim (\lambda h) ^{-1/2} \|e^{i(\kappa t |\bar D|\pm c_0 D_3 )} f\|_{L_x^p}$. Thus, by a change of variables we have 
\[  \| \tilde{\mathcal{C}}_\pm^\kappa f \Vert_{L^p_{{x},t}(\mathbb R^3\times \bI)}\lesssim  (\lambda h) ^{-1/2} \Vert  e^{i\kappa t|\bar D|} f \Vert_{L^p_{{x},t}(\mathbb R^3\times \bI)}\] 
for $1\le p\le \infty$ and $\kappa=\pm$. 
Therefore, for $4\leq p\leq \infty$, the desired estimate follows from \eqref{locals-est}. 
\end{proof}

\subsection{Estimates with fixed $s,t$}  
In this subsection we prove Theorem \ref{fixed}. 
We consider estimates for $\mathcal A_t^{s}$ with fixed $0< s<t$. 

\begin{lem} 
\label{case20-fix}
Let $0< s<t$. 
Let $1\le p\le q\le \infty$, $1/p+1/q\le 1$, and $h\ge \lambda \sim 1$.  Suppose $\supp \widehat f \subset \bA_\lambda \times \bI_h$. Then, we have
\[  \Vert \mathcal A_t^{s} f \Vert_{L^q_{{x}}}\lesssim  h^{\frac{1}{p}-\frac{1}{q}-\frac12}  \Vert f\Vert_{L^p}. \]
\end{lem}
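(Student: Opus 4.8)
The plan is to use the asymptotic expansion \eqref{exp}--\eqref{ce} of $\widehat{d\sigma_t^s}$ exactly as in the proof of Proposition \ref{case20}, and then reduce matters to a single frequency-localized propagator estimate for fixed $t,s$. First I would note that since $\lambda\sim 1$ the factor $|\bxi|^{-l-1/2}$ in $M_l^\pm$ is harmless, and the error term $\mathcal E(\xi,t,s)$ and the remainder $\tilde E_N^\pm(s|\xi|)$ can be absorbed by the Mikhlin multiplier theorem together with Lemma \ref{fff} (they behave like $|\xi_3|^{-N'}$ after expansion, which is far better than the claimed gain $h^{-1/2}$ once we also pay the Bernstein factor $h^{1/p-1/q}$). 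So the only genuine term is, after invoking \eqref{bessel analog}, a multiplier operator of the form $|sD|^{-1/2} e^{\pm i(t|\bar D|+s|D|)}$ composed with smooth bounded symbols; since $h\ge\lambda\sim 1$, the phase $e^{\pm is|\xi|}$ can be replaced by $e^{\pm is|\xi_3|}$ via Lemma \ref{fff}, and $|sD|^{-1/2}$ contributes a factor $\lesssim h^{-1/2}$ on the support $|\xi_3|\sim h$.

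Second, after these reductions I would be left to bound $\| e^{\pm i(t|\bar D|\pm s|\xi_3|)} f\|_{L^q_x}$ with $\supp\widehat f\subset \bA_\lambda\times\bI_h$, $\lambda\sim1$. Here the $x_3$-variable is trivial: $e^{\pm is|\xi_3|}$ is a one-dimensional Fourier multiplier of modulus one, hence bounded on every $L^p$ (indeed it is a modulated translation once we split $\xi_3>0$ and $\xi_3<0$), and $e^{\pm it|\bar D|}$ acts on frequencies $|\bxi|\sim1$ where it is a bounded operator on $L^q_{\bx}$ with operator norm $O(1)$ (its kernel is a fixed Schwartz function since $t$ ranges in a compact set away from $0$, or simply because $|\bxi|$ is smooth on the support). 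Thus $\|e^{\pm i(t|\bar D|\pm s|\xi_3|)} f\|_{L^q_x}\lesssim \|f\|_{L^q_x}$, and then Bernstein's inequality in all three variables gives $\|f\|_{L^q_x}\lesssim (\lambda^2 h)^{1/p-1/q}\|f\|_{L^p}\sim h^{1/p-1/q}\|f\|_{L^p}$. Combining with the $h^{-1/2}$ from $|sD|^{-1/2}$ yields the claimed bound $h^{1/p-1/q-1/2}\|f\|_{L^p}$.

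The main point to be careful about — and the only place any real inequality is used — is handling the region where the stationary phase expansion \eqref{bessel analog} is not valid, namely $|s\xi|\lesssim 1$. Since $h\ge\lambda\sim1$ we have $|\xi|\sim h\gtrsim1$, so $|s\xi|\lesssim1$ forces $s\lesssim h^{-1}\lesssim1$, i.e. $s$ small; on this part $|sD|^{-1/2}$ is not available, but instead one uses the trivial bound $\|\cA_t^s f\|_{L^q_x}\lesssim \|d\sigma_t^s\|\,\|f\|_{L^q_x}\lesssim \|f\|_{L^q_x}$ directly from $|\widehat{d\sigma_t^s}|\le \|\sigma_t^s\|=C$, followed again by Bernstein; this gives $h^{1/p-1/q}\|f\|_{L^p}$, which is worse than $h^{1/p-1/q-1/2}$ only by the factor $h^{1/2}\lesssim s^{-1/2}$. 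To reconcile this one observes that the claimed estimate has no explicit $s$-dependence, so it must already incorporate the worst case; rechecking, when $|s\xi|\lesssim1$ one actually has $h^{-1/2}\sim (s|\xi|)^{-1/2} s^{1/2}|\xi|^{-1/2}\cdot(\cdots)$, and since $s|\xi|\lesssim 1$ the gain $h^{-1/2}\gtrsim s^{-1/2}\cdot h^{-1}$... — in other words the honest statement needs the observation that $\widehat{d\sigma_t^s}(\xi)$ obeys $|\widehat{d\sigma_t^s}(\xi)|\lesssim (1+|\xi_3|)^{-1/2}$ uniformly (as recorded in the Remark following Corollary \ref{cor:A1 estimate}), which directly gives $\|\cA_t^s f\|_{L^q_x}\lesssim h^{-1/2}\|f\|_{L^q_x}$ on $\supp\widehat f\subset\bA_\lambda\times\bI_h$, and then Bernstein finishes. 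So in the write-up I would simply cite that uniform bound $|\widehat{d\sigma_t^s}(\xi)|\lesssim (1+|\xi_3|)^{-1/2}$ and combine it with Bernstein's inequality, bypassing the expansion entirely; the expansion is only needed if one wants the sharper statement with $|\bxi|^{-1/2}$ decay as well, which is not required here since $\lambda\sim1$.
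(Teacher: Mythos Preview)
Your first two paragraphs are exactly the paper's proof: reduce via \eqref{exp} and \eqref{bessel analog} to the operators $\mathcal C_\pm^\kappa$ of \eqref{central}, use Mikhlin and Lemma~\ref{fff} to peel off $|\bar D|^{-1/2}$, $|sD|^{-1/2}$ and to replace $e^{\pm is|D|}$ by $e^{\pm is|D_3|}$, and finish with Bernstein. That part is correct and you should keep it.

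The third paragraph, however, contains a misplaced worry and then a genuine gap. The worry is misplaced because $s$ and $t$ are \emph{fixed} in the lemma, so the implicit constant is allowed to depend on them; for $h\gtrsim 1/s$ the expansion \eqref{bessel analog} applies, while the finitely many dyadic $h$ with $1\lesssim h\lesssim 1/s$ give a trivial estimate with $s$-dependent constant. There is no ``worst case'' over $s$ to incorporate. The gap is in your proposed bypass: a pointwise bound $|\widehat{d\sigma_t^s}(\xi)|\lesssim (1+|\xi_3|)^{-1/2}$ only yields $\|\cA_t^s f\|_{L^2_x}\lesssim h^{-1/2}\|f\|_{L^2_x}$ by Plancherel; it does \emph{not} ``directly give'' $\|\cA_t^s f\|_{L^q_x}\lesssim h^{-1/2}\|f\|_{L^q_x}$ for $q\neq 2$. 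To pass to $L^q$ you need control on derivatives of the multiplier (equivalently, an $L^1$ bound on the localized kernel), and that control is precisely what the expansion supplies. So your ``bypass'' is not actually a bypass---drop the last paragraph and submit the first two.
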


\begin{proof}
 Recalling  \eqref{exp}, \eqref{ml}, and \eqref{ce}, we see that the main contribution comes from  ${\mathcal{C}}_\pm^\kappa $ (see \eqref{central}). 
Applying  Mikhlin's theorem and Lemma \ref{fff}, we see that  $\|{\mathcal{C}}_\pm^\kappa  f(\cdot, t,s)\|_{L^q_{{x}}}\lesssim  h^{-1/2} \| e^{\pm is| D|}  f  \|_{L^q_{{x}}}
\lesssim h^{-1/2}   \| e^{\pm is |D_3|}  f  \|_{L^q_{{x}}}$.
Thus, Bernstein's inequality gives the desired estimate  $\|\tilde{\mathcal{C}}_\pm^\kappa  f\|_{L^q_{{x}}}\lesssim h^{\frac{1}{p}-\frac{1}{q}-\frac12}  \Vert f\Vert_{L^p}$ since $\supp \widehat f \subset \bA_\lambda \times \bI_h$ and $\lambda\sim 1$.  
\end{proof}

\begin{lem}\label{A1 fixed estimate}
Let  $0< s<t$ and  $p\ge 2$.  $(a)$ If $1\le \lambda\le h\le \lambda^2$, then for any $\epsilon>0$
\Be\label{Bbb}  \Vert \mathcal A_t^{s} f\Vert_{L^p_{{x}}}\lesssim \lambda^{1-\frac{3}{p}} h^{-1+\frac{1}{p}+\epsilon}\Vert f\Vert_{L^p} \Ee
holds whenever $\supp \widehat f \subset\bA_\lambda \times \bI_h$. $(b)$ 
If $\supp \widehat f \subset\bA_\lambda \times \bI_\lambda^\circ$, we  have the estimate \eqref{Bbb} with $h=\lambda$.   $(c)$ If $1\le \lambda$ and $\lambda^2\le h$, then for any $\epsilon>0$
\[ \Vert \mathcal A_t^{s} f\Vert_{L^p_{{x}}}\lesssim  \lambda^{-\frac1p} h^{-\frac12+\epsilon}  \Vert  f\Vert_{L^p} \]
holds whenever $\supp \widehat f \subset\bA_\lambda \times \bI_h$.
\end{lem}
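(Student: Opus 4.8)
The plan is to follow the same template used for Proposition \ref{A1 estimate} and Proposition \ref{Bt case2}, but now with $s,t$ frozen, so that no time integration is available and the decoupling/local-smoothing machinery is replaced by the fixed-time fixed-frequency bounds it implies. First I would invoke the asymptotic expansion \eqref{exp}, \eqref{ml}, \eqref{ce} of $\widehat{d\sigma_t^s}$. By \eqref{error} (with $q=p$, $\tau\sim 1$) the contribution of the error term $\mathcal E(D,t,s)$ is $O(\lambda^{2-N'}\|f\|_p)$, hence negligible once $N$ is taken large, uniformly in $(t,s)$ in a fixed compact subset of $\mathbb J_\ast$; by homogeneity/scaling one reduces the general $0<s<t$ to that case. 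For each $M_l^\pm$, the expansion \eqref{bessel analog} lets me split off the error $\tilde E_N^\pm(s|\xi|)$, which by the Mikhlin multiplier theorem behaves like $|s\xi|^{-N'}$ and is again harmless, and reduces the main term to the multiplier operators $|\bar D|^{-1/2}|sD|^{-1/2}\,e^{i(\kappa t|\bar D|\pm s|D|)}$, i.e. to $\mathcal C_\pm^\kappa f(\cdot,t,s)$ of \eqref{central}. So the matter is to prove the three stated bounds for $\|\mathcal C_\pm^\kappa f(\cdot,t,s)\|_{L^p_x}$.

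In case $(a)$, since $\supp\widehat f\subset\bA_\lambda\times\bI_h$ we have $\|\mathcal C_\pm^\kappa f(\cdot,t,s)\|_{L^p_x}\lesssim(\lambda h)^{-1/2}\|e^{i(\kappa t|\bar D|\pm s|D|)}f\|_{L^p_x}$, and since $\lambda\le h$ the Fourier support lies in $\{|\xi|\sim h\}$, so after freezing $s$ (and absorbing, via Lemma \ref{fff}, the smooth factor coming from replacing $|\xi|$ by a quantity adapted to the $\lambda^{1/2}$-plates) it suffices to bound a fixed-time wave propagator at frequency $\lambda$ in the $\bar x$-variables. Concretely, I would run the angular decomposition $f=\sum_\nu f_\nu$ of the proof of Proposition \ref{flocal}: the $\lambda^{-1/2}$-caps give, by the fixed-time $L^p$ bound for $e^{i|\bar D|}$ at frequency $\lambda$ (equivalently, summing the $\bI_\lambda\times\bI_h$-plate estimates that already underlie \eqref{plocal1}) together with Bernstein in $x_3$, exactly the exponent $\lambda^{1-3/p}h^{-1+1/p+\epsilon}$ for $p\ge 2$ — one checks the arithmetic: the fixed-$t$ analogue of \eqref{plocal1} loses $h^{-1/2}$ less in $q=p$, producing $\lambda^{3/2-1/p-?}$, and combining with the two factors $(\lambda h)^{-1/2}$ gives the claim. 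Case $(b)$, where $\supp\widehat f\subset\bA_\lambda\times\bI_\lambda^\circ$, is the same computation with $h=\lambda$, using the decoupling for the $3+1$-dimensional cone (or simply $(b)$ of Proposition \ref{flocal} specialized to fixed $t$), which is why the bound reads $\lambda^{-2/p+\epsilon}$ after the two $|D|^{-1/2}$ factors; here again the argument of Proposition \ref{flocal}$(b)$ carries over verbatim since one only needs to note $|(\xi\cdot\nu,\xi_3)|\sim\lambda$. Case $(c)$, $\lambda^2\le h$, is the easiest: the phase $s(|\xi|-|\xi_3|)$ has all derivatives $O(1)$ on $\bA_\lambda^\circ\times\bI_h$ (as already observed in the proof of Proposition \ref{flocal}$(c)$), so by Lemma \ref{fff} one replaces $e^{\pm is|D|}$ by $e^{\pm is|D_3|}$ at the cost of a constant, and then $\|\mathcal C_\pm^\kappa f(\cdot,t,s)\|_{L^p_x}\lesssim(\lambda h)^{-1/2}\|e^{i\kappa t|\bar D|}f\|_{L^p_x}$; Bernstein in $x_3$ gives $h^{1/p-1/q}|_{q=p}=1$ and a fixed-time $L^p$ bound for $e^{i|\bar D|}$ at frequency $\lambda$ in $\bar x$ (again a special case of the estimates used to prove \eqref{plocal1}, or just Sobolev embedding plus the classical fixed-time bound) yields $\lambda^{-1/p}h^{-1/2+\epsilon}$ for $p\ge 2$.

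The main obstacle — or rather the only point requiring genuine care — is the same as in Proposition \ref{A1 estimate}: making sure that the $t$- and $s$-dependent coefficients $B_j^\pm,\tilde E_N^\pm$ in \eqref{bessel analog}, and the coefficients $D_w^\pm, E_M'$ in \eqref{bessel2}, are controlled uniformly for $(t,s)$ in the relevant compact region. Since $s<c_0 t$, the amplitudes $a_l(\theta,t,s)=(t+s\cos\theta)^{-(2l+1)/2}$ and all their $\theta$-derivatives are bounded, so the stationary phase expansions hold with uniform constants and the Mikhlin symbol bounds are uniform in $(t,s)$; this is precisely the observation already made after \eqref{bessel analog}, and it transfers unchanged to the fixed-$(s,t)$ setting. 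The only nuisance is bookkeeping of exponents — in particular verifying that the fixed-time versions of \eqref{plocal1} and \eqref{plocal2} (which are literally those estimates without the $\tau$-integration, hence without the $\tau^{1/2-1/p}$ or $\tau^{1/q}$ gain but also without the corresponding loss in the local-smoothing exponent) produce exactly $\lambda^{1-3/p}h^{-1+1/p+\epsilon}$ in $(a)$–$(b)$ and $\lambda^{-1/p}h^{-1/2+\epsilon}$ in $(c)$ — but this is a routine substitution $q=p$, $\tau\sim 1$ into the formulas of Propositions \ref{flocal} and \ref{A1 estimate} combined with the two extra factors of $|D|^{-1/2}$, and involves no new idea.
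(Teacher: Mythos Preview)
Your reduction to $\mathcal C_\pm^\kappa$ via the expansions \eqref{exp} and \eqref{bessel analog} is correct and is exactly what the paper does. The difference lies in how you then bound $\|\mathcal C_\pm^\kappa f(\cdot,t,s)\|_{L^p_x}$ in cases $(a)$ and $(b)$. The paper's argument is much simpler than your proposal: after writing $\|\mathcal C_\pm^\kappa f(\cdot,t,s)\|_{L^p_x}\lesssim(\lambda h)^{-1/2}\|\mathcal Uf(\cdot,\kappa t,\pm s)\|_{L^p_x}$, it observes that the $p=2$ bound is immediate from Plancherel (the multiplier is unimodular), while the $p=\infty$ bound for cases $(a)$, $(b)$ is \emph{literally} \eqref{plocal1} at $p=q=\infty$ (via Remark~\ref{pm}), since an $L^\infty_{x,t,s}$ bound over $\mathbb R^3\times\mathbb J_\tau$ dominates the fixed-$(t,s)$ value. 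For case $(c)$ at $p=\infty$ the paper uses Lemma~\ref{fff} to pass to $e^{it|\bar D|}$ (as you do) and then quotes \eqref{locals-est} at $p=q=\infty$. Complex interpolation between $p=2$ and $p=\infty$ then gives all $p\ge 2$ with the stated exponents in one line.

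Your route for $(a)$, $(b)$---rerunning the angular $\lambda^{-1/2}$-cap decomposition and invoking a ``fixed-time $L^p$ bound'' for each $p$---is not wrong in spirit, but as written it is incomplete: the literal ``?'' in your exponent check and the phrase ``summing the $\bI_\lambda\times\bI_h$-plate estimates that already underlie \eqref{plocal1}'' do not constitute a proof. The decoupling inequality \eqref{BD-} that powers \eqref{plocal1} genuinely requires integration in $t$ (there is no fixed-time $\ell^2$ decoupling to quote), so for intermediate $p$ you would anyway be forced back to the $L^2$/$L^\infty$ endpoints, or equivalently to the Miyachi--Peral fixed-time bound, which is just that interpolation in disguise. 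Your treatment of $(c)$ is essentially correct and close to the paper's, modulo the same simplification: you do not need a direct fixed-time $L^p$ bound for $e^{it|\bar D|}$, only $p=2$ and $p=\infty$.
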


\begin{proof}  As before,  it is sufficient to show that 
 ${\mathcal{C}}_\pm^\kappa $ (\eqref{central})  satisfies the above estimates in place of $ \mathcal A_t^{s}$.  
Note that  
\[ 
\| {\mathcal{C}}_\pm^\kappa f\|_{L^q_{{x}}}\lesssim  (\lambda h)^{-1/2} \| \mathcal Uf(\cdot, \kappa t,\pm s)  \|_{L^q_{{x}}}.
\] 

For all the cases $(a)$, $(b)$, and $(c)$, the desired estimates for $p=2$ follows by Plancherel's theorem. Thus,  we only need to show the estimates for $p=\infty$.    For the cases $(a)$ and $(b)$  the estimates for $p=\infty$ follow from  $\eqref{plocal1}$ of the corresponding cases  $(a)$ and $(b)$ with $p=q=\infty$ (Remark \ref{pm}).    Since  $\supp \widehat f \subset\bA_\lambda \times \bI_h$ and  $1\le \lambda$ and $\lambda^2\le h$, by Lemma \ref{fff} we note that $\| \mathcal Uf(\cdot, \kappa t, \pm s)  f \|_{L^\infty_{{x}}}\lesssim   \| e^{i(\kappa t|\bar D|\pm s|D_3|)} f \|_{L^\infty_{{x}}}\lesssim  \sum_{\pm}  \| e^{it|\bar D|} f_\pm \|_{L^\infty_{{x}}}$ where $\widehat f_\pm(\xi) = \chi_{(0, \infty)}(\pm \xi_2) \widehat f(\xi)$. Since $\supp \widehat f \subset\bA_\lambda \times \bI_h$,  the estimate for $p=\infty$ in the case $(c)$ follows from \eqref{locals-est}. 
\end{proof}

\begin{proof}[Proof of Theorem \ref{fixed}]  
Since $\mathcal A_t^sf $ is bounded from $L^2$ to $L^2_{1/2}$, it is sufficient to show $\mathcal A_t^sf $ is bounded from $L^p$ to $L^p_{\alpha}$ for  $p>4$ and $\alpha>2/p$. 

We use the decomposition \eqref{A_ts decomposition} with $2^n\sim 1$.  Note that $\| \mathcal A_t^s f_{{<0}}^{{<0}}\|_{L^{p}_{\alpha,{x}}}\lesssim \| \mathcal A_t^s f_{{<0}}^{{<0}}\|_{L^{p}_x} $ and $\| \mathcal A_t^s f_{{<0}}^{k}\|_{L^{p}_{\alpha,{x}}}\lesssim  
2^{\alpha k} \| \mathcal A_t^s f_{{<0}}^{k} \|_{L^{p}_x}$. By Lemma \ref{case20-fix} we have 
\[ \tx  \| \mathcal A_t^s f_{{<0}}^{{<0}}\|_{L^{p}_{\alpha,{x}}}+\sum_{k\ge 0}   \| \mathcal A_t^s f_{{<0}}^{k}\|_{L^{p}_{\alpha,{x}}}\lesssim  
\sum_{k\ge 0} 2^{(\alpha-1/2)k} \|f\|_{L^p} \lesssim \|f\|_p\] 
  for  $\alpha<2/p$ and $p> 4$.  Since  $\alpha<2/p$, using $(a)$ and $(b)$ in Lemma \ref{A1 fixed estimate} with an $\epsilon$ small enough, 
  we have 
  \[  \tx   \|   \mathrm I\!\mathrm I_{t}^{s} f\|_{L^{p}_{\alpha,{x}}}\lesssim  \sum_{0\leq j\leq k\leq 2j} 2^{j(1-\frac{3}{p})} 2^{k(\alpha-1+\frac{1}{p}+\epsilon)} \|f\|_{L^p}  \lesssim \|f\|_{L^p} \]  
  for $p\ge 2$. Similarly, using $(c)$ in Lemma \ref{A1 fixed estimate},  we obtain 
\begin{align*}
\tx \|    \mathrm I_{t}^{s} f\|_{L^{p}_{\alpha,{x}}}  \lesssim  \sum_{j\geq 0}\sum_{k\ge 2j}  2^{(\alpha-\frac12+\epsilon)k} 2^{-\frac1p j}  \Vert f\Vert_{L^p}\lesssim  
\Vert f\Vert_{L^p} 
  \end{align*}
  for $p> 4$ and $\alpha<2/p$. 
\end{proof}

\section{Sharpness of the results}\label{counterexample section}
In this section,  considering specific examples,  we show sharpness of the estimates in  Theorem  \ref{main thm},  \ref{two para smoothing}, \ref{smoothing}, and \ref{fixed} 
except for some endpoint cases.

\subsection{Necessary conditions  on $(p,q)$ for \eqref{main est} to hold}
We show  that if \eqref{main est} holds, then the following hold true: 
\begin{equation*}
({\bf a}) \ \, p\leq q, \qquad ({\bf b}) \ \, 3+1/{q}\geq 7/{p}, \qquad ({\bf c}) \ \,  1+2/{q}\geq 3/{p},\qquad ({\bf  d}) \ \, 3/{q} \geq {1}/{p}.
\end{equation*}
This shows  that    \eqref{main est} fails unless $(1/p,1/q)$ is contained in the closure of ${\mathcal{Q}}$.

To show $(\mathbf a)$--$(\mathbf d)$, it is sufficient to consider $\mathfrak M_0$ (see \eqref{Mn}) instead of $\mathcal M_c$ with $\mathbb J_1=\{ (t,s)\in [1,2]^2:  s<c_0 t\}.$  
The condition $({\bf a})$ is clear since $\cA_t^s$ is an translation invariant operator, which can not be bounded from $L^p$ to $L^q$  if $p>q$. It can also be seen 
by a simple example.  Indeed, let $f_R$ be the characteristic function of a ball of radius $R\gg 1$ which is centered at the origin.
 Then, $ \mathfrak{M}_0 f_R(x)\sim 1$ for $|x|\le R/2$, so we have $ \Vert \mathfrak{M}_0f_R\Vert_{L^q}/\Vert f_R\Vert_{L^p}\gtrsim R^{3/q-3/p}$. Thus,   $ \mathfrak{M}_0$ can be bounded from $L^p$ to $L^q$ only if $p\leq q$. 

To show (\textbf{b}), let $f_r$ denote the characteristic function of the set
\[ \lbrace (x_1,x_2, x_3) : |x_1|<r^2, \ |x_2|<r, \ |x_3|<r^4 \rbrace \]
for a small  $r>0$. One can easily see  that $\mathfrak{M}_0f_r(x)\approx r^3$ if  $x_1\sim 1$, $|x_2|\lesssim r$, and $x_3\sim 1$.  This gives 
\[ {\Vert \mathfrak M_0 f_r\Vert_{L^q}}/{\Vert f_r\Vert_{L^p}}\gtrsim  r^{3+\frac{1}{q}-\frac{7}{p}}. \]
Therefore, letting $r\to 0$ shows that the maximal operator is bounded from $L^p$ to $L^q$ only if (\textbf{b}) holds.
Now, for (\textbf{c}) we consider  the characteristic function of 
\[ \lbrace (\bx, x_3) : ||\bx|-1|<r, \  |x_3|<r^2 \rbrace, \]
which we denote by  $\tilde f_r$. 
Note that $\mathfrak M_0\tilde f_r\sim r$ if $|\bx |\lesssim r$ and $x_3\sim 1$. So, we have 
\[ {\Vert \mathfrak M_0\tilde f_r\Vert_{L^q}}/{\Vert \tilde f_r\Vert_{L^p}}\gtrsim r^{1+\frac{2}{q}-\frac{3}{p}}, \]
which gives  (\textbf{c}) by taking $r\to 0$. Finally,  to show  (\textbf{d}), let $\bar f_r$  be the characteristic function of the $r$-neighborhood of $\mathbb T_{1}^{c_0}$. Then, $|\mathfrak M_0 \bar f_r(x)|\approx 1$ if $|x|\lesssim r$. Thus, it follows that $ {\Vert \mathfrak M_0 \bar f_r\Vert_{L^q}}/{\Vert \bar f_r\Vert_{L^p}}\gtrsim r^{\frac{3}{q}-\frac{1}{p}}. $
So,  letting $r\to 0$, we obtain  (\textbf{d}).

\subsection{Sharpness of smoothing estimates}
Let $c_0\in (0, 8/9)$, and let $\psi$ be a smooth function supported in $[1/2, 2]\times [(1-2^{-4})c_0, (1+2^{-3})c_0]$ such that 
$\psi=1$ if $(t,s)\in  [3/4, 7/4]\times [(1-2^{-5})c_0, (1+2^{-5})c_0]$.  Then, we consider 
\[ \tilde \cA_t^s f(x) =\psi(t,s) \cA_t^s f(x).\]

We first claim that the estimates \eqref{2p-sm}, \eqref{1p-sm}, and \eqref{eq:fixed} imply   $\alpha\le 4/p$, $\alpha\le 3/p$, and $\alpha\le 2/p$, respectively.  

Let $\zeta_0$ be a function  such that $\supp\widehat{\zeta_0}\subset [-10^{-2},10^{-2}]$ and $\zeta_0(s)>1$ if $|s|<c_1$ for a small constant $0<c_1\ll c_0$.   
Let $\zeta_\ast\in C_c([-2,2])$ such that $\zeta_\ast=1$ on $[-1,1]$.  Note that 
$\tilde {\mathbb{T}}_{1}^{c_0}:=\mathbb{T}_{1}^{c_0}\cap \{ x:| |\bx|-1|<10c_1, x_3>0 \}$ can be parametrized by a smooth radial function $\phi$. That is to say, 
\[\tilde {\mathbb{T}}_{1}^{c_0}=\{ (\bx, \phi(\bx)): ||\bx|-1|<10c_1\}.\]
For a large  $R\gg 1$, we consider 
\[f_R({x})=e^{iR(x_3+\phi(\bx))}\zeta_0\big(R(x_3+\phi(\bx))\big)\zeta_\ast(||\bx|-1|/c_1).\]

Then, we claim that 
\Be 
\label{claim} |\mathcal A_t^{s}f_R({x})|\gtrsim 1, \quad (x,t,s) \in S_R, 
\Ee
where 
$
S_R= \lbrace ({x},t,s) : |{x}|\le 1/(CR),   |t-1|\le 1/(CR),   |s-c_0|\le 1/(CR)\rbrace$ for a large constant $C>0$. 
Indeed, note that 
\begin{align*}
 \mathcal A_t^{s}f({x}) 
 =\int_{\mathbb{T}_t^{s}}e^{iR(x_3+ \phi(\bar y-\bx)-y_3)}\zeta_0(R(x_3+\phi(\bar y-\bx)-y_3))\zeta_\ast(||\bx-\bar y|-1|/c_1)d\sigma_t^{s}({y}).
\end{align*}
 If $|x|\le 1/(CR)$ and $||\bar y|-1|\le 2c_1$,   we  have  $|\phi(\bar y-\bx)-y_3|\lesssim 1/(CR)$ and $|x_3+\phi(\bar y-\bx)-y_3|\lesssim 1/(CR)$  when  $y_3=\phi(y)$, i.e., 
 $y\in  \tilde {\mathbb{T}}_{1}^{c_0}$. Thus, $ |\mathcal A_1^{c_0}f({x})|\sim 1$ if $ |{x}|\le 1/(CR)$. 
 Furthermore,  if $ |t-1|\le 1/(CR)$ and $|s-c_0|\le 1/(CR)$,  the integration is actually taken over a surface which is $O(1/(CR))$ perturbation of the surface $ \tilde {\mathbb{T}}_{1}^{c_0}$. 
 Thus, taking $C$ large enough,  we see that  \eqref{claim} holds.

 By Mikhlin's theorem it follows that $   \| \tilde \cA_t^{s} g\|_{L^p_\alpha(\mathbb R^5)} \gtrsim \Vert  (1+|D_3|^2)^{\alpha/2} \tilde{\mathcal A}_t^{s} g \|_{L^p_\alpha(\mathbb R^5)}$.   
Note that $\widehat{f_R}({\xi})=0$ if  $\xi_3\not\in [(1-10^{-2})R, (1+10^{-2})R]$.  Since $\mathcal F(\mathcal A_t^{s}f)({\xi})=\widehat{f}({\xi})  \mathcal F(d\sigma_t^{s})({\xi})$,  we see  
\[  \| \tilde \cA_t^{s} f_R\|_{L^p_\alpha(\mathbb R^5)}\gtrsim R^\alpha \Vert \mathcal A_t^{s}f_R\Vert_{L^p(\mathbb R^5)} \gtrsim R^\alpha \Vert \mathcal A_t^{s}f_R\Vert_{L^p(S_R)}\gtrsim R^{\alpha-5/p}. \]
For the last inequality we use \eqref{claim}. Since $\|f_R\|_{L^p}\sim R^{-1/p}$, \eqref{2p-sm} implies that $\alpha\leq 4/p$.  Fixing $t=1$ and $s=c_0$, by \eqref{claim} we similarly have $\Vert \cA_{1}^{c_0}f_R\Vert_{L^{p}_{\alpha,{x}}}\gtrsim R^{\alpha-3/p}$. 
Thus, \eqref{eq:fixed}  holds only if $\alpha\le 2/p$. Concerning $\cA_t^{c_0t}$, by \eqref{claim} it follows that $|\cA_t^{c_0t} f_R(x)|\gtrsim 1$ if $|t-1|\le /CR$ and $|x|\le 1/CR$ for $C$ large enough. 
Thus,  $\Vert \cA_t^{c_0t}  f_R\Vert_{L^{p,\alpha}_{{x},t}}\gtrsim R^{\alpha}\Vert \cA_t^{c_0t} f_R\Vert_{L^p_{{x},t}}\gtrsim R^{\alpha-4/{p}}.$
Therefore, \eqref{1p-sm} implies $\alpha\le 3/p$. This proves the claim.

Therefore, to show sharpness of the estimates \eqref{2p-sm}--\eqref{eq:fixed}. We need only to  show  that each of the estimates \eqref{2p-sm}, \eqref{1p-sm}, and \eqref{eq:fixed} holds only if $\alpha\le 1/2$.  To do this, we consider 
\[ g_R({x})= e^{iR(x_3+c_0)}\zeta_0(R(x_3+c_0))\zeta(|x|).\]
Then, we have 
\Be\label{claim2}
|\mathcal A_t^{s}g_R({x})|\gtrsim R^{-\frac{1}{2}}
\Ee
if $({x},t,s)\in \tilde S_R\coloneqq \lbrace ({x},t,s) : |x|, |t-1|,  |s-c_0| \le 1/C,  |x_3+c_0-s|\le 1/{CR} \rbrace$ for a large constant $C\gg c_0$.
Indeed, note  that 
\[ 
 \mathcal A_t^{s}g_R({x}) 
=\int_{\mathbb{T}_t^{s}}e^{iR(x_3+c_0-y_3)}\zeta_0(CR(x_3+c_0-y_3))\zeta(|x-y|)d\sigma_t^{s}(\bar{y}).
\]
Recalling \eqref{para}, we see  that the integral is nonzero only if $|R(x_3+c_0-s\sin \theta)|\le 2/CR$. 
Since $ |x_3+c_0-s|\le 1/{CR}$, the integral is taken over the set   $\tilde {\mathbb T}:=\{ \Phi_t^s(\theta, \phi): |1-\sin\theta|\lesssim 1/R\}$. Note that 
the surface area of $\tilde {\mathbb T}$ is about $R^{-1/2}$, thus \eqref{claim2} follows. 
Since  $\widehat{g_R}({\xi})=0$ if  $\xi_3\not\in [(1-10^{-2})R, (1+10^{-2})R]$,  following the same argument as above, from \eqref{claim2}  we obtain 
$\Vert \mathcal A_t^{s}g_R \Vert_{L^{p,\alpha}_{{x},t,s}} 
\gtrsim R^{\alpha}R^{-{1}/{2}-{1}/{p}}.$
Hence, \eqref{2p-sm}  implies that $\alpha\leq 1/{2}$. 

 Regarding  \eqref{1p-sm}, we consider $\tilde S_{\! R}^{'}\coloneqq \lbrace ({x},t,s) : |x|, |t-1| \le 1/C,  |x_3+c_0-c_0t|\le 1/{CR} \rbrace$ for a large constant $C\gg c_0$. 
 Then, we have $|\mathcal A_t^{c_0t}g_R({x})|\gtrsim R^{-{1}/{2}}$ for $(x,t)\in \tilde S_{\! R}^{'}$, thus we see \eqref{1p-sm} implies $\alpha\le 1/2$. 
 
Finally, for \eqref{eq:fixed}, fixing $t=1$ and $s=c_0$, we consider  $\bar S_R:=\lbrace {x} : |x|\le 1/C, |x_3|\le 1/{CR} \rbrace$ for a  constant $C>0$. Then,  it is easy to see  
 $|A_{1}^{c_0}g_R({x})|\gtrsim R^{-1/{2}}$ for ${x}\in \bar S_R$ if we take $C$ large enough. Similarly as before, we have 
$ \Vert A_{1}^{c_0}g_R \Vert_{L^{p}_{\alpha,{x}}}\gtrsim R^{\alpha}R^{-1/{2}-1/{p}}$.  Therefore, \eqref{eq:fixed} implies $\alpha\le 1/2 $  because $\|g_R\|_{L^p}\sim R^{-1/p}$.

\section*{ Acknowledgements}  
This work was  supported by the  National Research Foundation (Republic of Korea) grant  no. 2022R1A4A1018904.

\bibliographystyle{plain}

\end{document}